\documentclass[10pt]{article}
\usepackage[utf8]{inputenc}
\usepackage[T1]{fontenc}
\usepackage{amsfonts}
\usepackage{amsmath,latexsym}
\usepackage{amssymb,amsthm}
\usepackage{graphicx}
\usepackage[english]{babel}
\usepackage{hyperref}

\usepackage{xcolor}

\usepackage{xspace}

\usepackage{mathrsfs}

\usepackage{tikz}
\usetikzlibrary{decorations.pathreplacing}
\usepackage{caption}

\usepackage{cases}
\usepackage[displaymath, mathlines]{lineno}

\usepackage[shortlabels]{enumitem}

\usepackage{footmisc}

\usepackage{geometry}
\usepackage[french]{layout}
\geometry{hmargin=3.1cm,tmargin=3.1cm,bmargin=3.1cm,headheight=15pt,headsep = 25pt}

\providecommand{\keywords}[1]{\textbf{Keywords:} #1}

\newtheorem{theorem}{Theorem}[section]
\newtheorem{lemma}[theorem]{Lemma}
\newtheorem{proposition}[theorem]{Proposition}

\theoremstyle{definition}
\newtheorem{definition}[theorem]{Definition}
\newtheorem{example}[theorem]{Example}

\theoremstyle{remark}
\newtheorem{remark}[theorem]{Remark}

\definecolor{darkgreen}{rgb}{0,0.5,0}

\def\ds{\displaystyle}

\newcommand{\tr}[1]{{#1}^{\mathrm{Tr}}}

\newcommand{\field}[1]{\ensuremath{\mathbb{#1}}}

\newcommand{\R}{\field{R}\xspace}
\newcommand{\N}{\field{N}\xspace}

\newcommand{\ens}[1]{ \left\{#1\right\} }

\newcommand\abs[1]{\left|#1\right|}
\def\norm#1{\left\|#1\right\|}

\newcommand\diag{\mathrm{diag} \xspace}

\newcommand\Topt{T_{\mathrm{unif}}(\Lambda)}

\newcommand\Id{\mathrm{Id}}

\renewcommand\epsilon{\varepsilon}

\newcommand\dds{\frac{d}{ds}}

\newcommand\pt[1]{\frac{\partial #1}{\partial t}}
\newcommand\px[1]{\frac{\partial #1}{\partial x}}
\newcommand\pxi[1]{\frac{\partial #1}{\partial \xi}}
\newcommand\pas[1]{\frac{\partial #1}{\partial s}}
\newcommand\pnu[1]{\frac{\partial #1}{\partial \nu}}

\newcommand\clos[1]{\overline{#1}}

\newcommand\Tau{\mathcal{T}}
\newcommand\Taub{\mathcal{P}}
\newcommand\Rect{\mathcal{R}}

\newcommand\discset{\mathcal{D}}

\newcommand\ssin{s^{\mathrm{in}}}
\newcommand\ssout{s^{\mathrm{out}}}
\newcommand\ssinb{\bar{s}^{\mathrm{in}}}

\newcommand\ssoutnu{s^{\mathrm{out},\nu}}

\newcommand\SSoutp{\Taub^{\mathrm{out},+}}
\newcommand\SSoutm{\Taub^{\mathrm{out},-}}
\newcommand\discsetout{\mathcal{D}^{\mathrm{out}}}

\newcommand\SSinp{\Taub^{\mathrm{in},+}}
\newcommand\SSinm{\Taub^{\mathrm{in},-}}
\newcommand\discsetin{\mathcal{D}^{\mathrm{in}}}

\newcommand\sinterin{\ssin}
\newcommand\sinterout{\ssout}

\newcommand\sdisc{s^{\mathrm{disc}}}

\newcommand{\ceil}[1]{\left\lceil #1 \right\rceil}

\date{\today}

\title{Boundary stabilization in finite time of one-dimensional linear hyperbolic balance laws with coefficients depending on time and space}

\AtBeginDocument{\def\MR#1{}}

\begin{document}

\author{Jean-Michel Coron\thanks{Sorbonne Universit\'{e}, Universit\'{e} de Paris, CNRS, INRIA,
	Laboratoire Jacques-Louis Lions, \'{e}quipe Cage, Paris, France. E-mail: \texttt{coron@ann.jussieu.fr}.},
Long Hu\thanks{School of Mathematics, Shandong University, Jinan, Shandong 250100, China.  E-mail: \texttt{hul@sdu.edu.cn}.},
Guillaume Olive\thanks{Institute of Mathematics, Jagiellonian University, Lojasiewicza 6, 30-348 Krakow, Poland. E-mail: \texttt{math.golive@gmail.com} or \texttt{guillaume.olive@uj.edu.pl}},
Peipei Shang\thanks{School of Mathematical Sciences, Tongji University, Shanghai 200092, China. E-mail: \texttt{peipeishang@hotmail.com}.}
      }

\maketitle

\begin{abstract}
In this article we are interested in the boundary stabilization in finite time of one-dimensional linear hyperbolic balance laws with coefficients depending on time and space.
We extend the so called ``backstepping method'' by introducing appropriate time-dependent integral transformations in order to map our initial system to a new one which has desired stability properties.
The kernels of the integral transformations involved are solutions to non standard multi-dimensional hyperbolic PDEs, where the time dependence introduces several new difficulties in the treatment of their well-posedness.
This work generalizes previous results of the literature, where only time-independent systems were considered.

\end{abstract}

\keywords{Hyperbolic systems, Boundary stabilization, Non-autonomous systems, Backstepping method.}

\section{Introduction and main result}\label{sect intro}

\indent

In the present paper we are interested in the one-sided boundary stabilization in finite time of one-dimensional linear hyperbolic balance laws when the coupling coefficients of the system depend on both time and space variables.
To investigate this stabilization property we use the by now so-called ``backstepping method'', a method that consists in transforming our initial system into another system - called target system - for which the stabilization properties are simpler to study. In finite dimension it relies on a recursive design procedure, which in the case of partial differential equations leads to Volterra transformations of the second kind.

The idea of the possibility to transform a control system into another one in order to study its controllability or stabilization properties already goes back to the development of the control theory for linear finite-dimensional systems in the late 60's, notably with the celebrated work \cite{Bru70} where the author introduced the so-called ``control canonical form''.
Concerning infinite-dimensional systems, such as systems modeled by partial differential equations (PDEs), this approach is much more complicated.
The first attempt in this direction seems to be \cite{Rus78-JMAA}, where the author was interested in the spectral determination (i.e. pole placement) of a particular $2 \times 2$ first-order hyperbolic system.
The difficult task in this approach is, in general, to find an invertible transformation that allows to pass from one system to another and, to the best of our knowledge,  there is no general theory for infinite-dimensional systems so far (if possible).
In \cite{Rus78-JMAA}, the author proposed to use a Volterra transformation of the second kind to pass from what he called the ``control normal form'' to the control canonical form of his hyperbolic system and, in this way, easily solved his spectral determination problem.
In that paper, the use of such a transformation was justified by the analogy with finite-dimensional systems when using transformations of the simple form $\Id+K$ with $K$ being a triangular matrix (while for Volterra transformations of the second kind, $K$ is an integral operator whose kernel is supported in a triangular domain).
The use of a Volterra transformation of the second kind to transform a PDE into another one was also introduced at almost the same time in \cite{Col77}.
Therein, the author showed that a one-dimensional perturbed heat equation, with a time and space dependent perturbation, can be transformed into the classical heat equation by means of a Volterra transformation of the second kind whose kernel has to satisfy some PDE posed on a non-standard domain which is triangular.
The equation that the kernel has to satisfy is now commonly referred to as the ``kernel equation'' and the method was then referred by the author of \cite{Col77} to as the ``method of integral operators''.
The result of \cite{Col77} was notably applied in \cite{Sei84} to deduce the boundary null-controllability in one space dimension of the perturbed heat equation from that of the classical heat equation.

In the 90's a method with similar spirit appeared under the name of ``backstepping method''.
This method was primarily designed to transform, thanks to a recursive procedure, finite-dimensional control systems, which may be nonlinear, into control systems which can be stabilized by means of simple feedback laws.
This method was later on extended to linear PDEs. The first result in this direction is in \cite{ANC98} for a beam equation; see also\cite{LK00} for a Burgers' equation. However, the main breakthrough for the PDEs case are in \cite{BKL01,2002-Balogh-Krstic-EJC, Liu03}, which deal with 1-D heat equations and where Volterra transformations of the second kind are introduced or used. In particular in \cite{2002-Balogh-Krstic-EJC} the backstepping recursive procedure in finite dimension is applied to the semi-discretized finite difference approximation of these equations and it
is proved that, as the spatial step size tends to 0, the backstepping transformation at the finite dimensional level is converging to a Volterra transformation of the second kind. The fact that the transformation which appears with this approach is a Volterra transformation of the second kind comes from the recursive procedure of the backstepping method.
With this method the authors, directly inspired by the backstepping in finite dimension, independently arrived at the use of exactly the same transformation as in the two above mentioned pioneering references \cite{Rus78-JMAA} and \cite{Col77}.
This is the reason of the use of the terminology ``backstepping'' for the construction of stabilizing feedback laws relying on the use of Volterra transformations of the second kind to transform a given control PDE to another control PDE (called the target system) which can be easily stabilized (usually with the null feedback law).

The use of Volterra transformations of the second kind also matches very well with the boundary stabilization of one-dimensional systems since this transformation somehow removes the undesirable terms (or adds desirable ones) of the equation by ``bringing'' them to the part of the boundary where the feedback is acting (through the kernel equations).
This approach rapidly turned out to be very successful in the study of the boundary stabilization of various important PDEs such as heat equations, wave equations, Schr\"{o}dinger equations, Korteweg-de Vries equations, Kuramoto-Sivashinsky equations, etc. and it eventually leads to the by now reference book \cite{KS08} on this subject.
This method is nowadays systematically used as a standard tool to analyze the boundary stabilization for (mainly one-dimensional) PDEs.
This method has also received some recent developments.
Notably, the use of Volterra transformations of the second kind has started to show some serious limitation for some problems and it has been replaced by more general integral transformations such as Fredholm integral transformations (see e.g. \cite{CL14,CL15,BAK15,CHO16,CHO17,CGM18}) or other kind of integral transformations (see e.g. \cite{SGK09}).
In these cases the transformation on the state does not have any special structure and the method is no longer related to the finite dimensional backstepping approach. It is related to the older notion of feedback equivalence, as initiated in \cite{Bru70}; see also \cite{1972-Kalman-in-Weiss}, \cite[Section 5.7]{1985-Wonham-book}, and \cite[Section 5.2]{1998-Sontag-book}.

Concerning more specifically systems of hyperbolic equations and the finite-time stabilization property, which is the focus of this article, the first result was obtained in \cite{CVKB13}.
In this paper, the authors developed the original backstepping method to prove the boundary stabilization of a $2 \times 2$ hyperbolic system in finite time, with the best time that can be achieved.
The generalization of the result of \cite{CVKB13} to $n \times n$ systems was a non-trivial task which was eventually solved in \cite{HDMVK16,HVDMK19} using the ideas introduced previously in \cite{HDM15} for $3 \times 3$ systems.
The key point was to add additional constraints on the kernel to obtain a specific structure of the coupling parameter in the target system.
The time of stabilization found in \cite{HDMVK16,HVDMK19} was then improved in \cite{ADM16,CHO17}, using two different target systems.

The goal of the present article is to extend the results of the previously mentioned references to time-dependent systems.
For the finite-time stabilization of non-autonomous hyperbolic systems, the only works that we are aware of are \cite{DJK16} and \cite{AA18} which concerned a single equation with constant speed.
Therefore, the non-autonomous case for systems was still left without investigation.
The introduction of the time variable in the coupling coefficients obviously complicates the whole situation.
As in \cite{Col77,DJK16,AA18} we need to introduce integral transformations with time-dependent kernels, resulting in much more complex kernel equations to solve.
Finally, in addition to the previous references, we would also like to mention the work \cite{Wan06} on time-dependent quasilinear hyperbolic systems concerning the related notion of controllability and the works \cite{SK05,KD19}, with the references therein, concerning the stabilization of time-dependent parabolic systems (where strong regularity conditions are required to make the backstepping method work, because of the result of \cite{Kan90}).

The rest of this paper is organized as follows.
In the remaining part of Section \ref{sect intro} we present in details the class of hyperbolic systems that we consider and we state our main result.
In Section \ref{sect syst transfo} we perform several transformations to show that our initial system can be mapped to a target system which is finite-time stable with desired settling time.
In Section \ref{sect exist kern} we prove the existence and regularity of the kernels of the integral transformations that were used in the previous section.
Finally, we gathered in Appendices \ref{sect broad sol}, \ref{app prop} and \ref{app lemma} some auxiliary results.

\subsection{System description}\label{sec:system}

In this article, we focus on the following general $n\times n$ linear hyperbolic systems, which appear for instance in the linearized Saint-Venant equations,  plug flow chemical reactors equations, heat exchangers equations and many other physical models of balance laws (see e.g. \cite[Chapter 1]{BC16}) around time-varying trajectories:
\begin{equation}\label{syst init}
\left\{\begin{array}{l}
\ds \pt{y}(t,x)+\Lambda(t,x) \px{y}(t,x)=M(t,x) y(t,x), \\
y_-(t,1)=u(t), \quad y_+(t,0)=Q(t)y_-(t,0),  \\
y(t^0,x)=y^0(x).
\end{array}\right.
\end{equation}
In \eqref{syst init}, $t>t^0 \geq 0$ and $x \in (0,1)$, $y(t,\cdot)$ is the state at time $t$, $y^0$ is the initial data at time $t^0$ and $u(t)$ is the control at time $t$.
The matrix $M$ couples the equations of the system inside the domain and the matrix $Q$ couples the equations of the system on the boundary $x=0$.
We assume that the matrix $\Lambda$ is diagonal:
\begin{equation}\label{Lambda diag}
\Lambda=\diag(\lambda_1,\ldots,,\lambda_n).
\end{equation}
We denote by $m\in\ens{1,\ldots,n-1}$ the number of equations with negative speeds and by $p=n-m \in\ens{1,\ldots,n-1}$ the number of equations with positive speeds (all along this work we assume that $n \geq 2$, see Remark \ref{rem m=n} below for the case $m=n \geq 1$).
We assume that there exists some $\epsilon>0$ such that, for every $t \geq 0$ and $x \in [0,1]$, we have
\begin{equation}\label{hyp speeds}
\lambda_1(t,x)<\cdots <\lambda_m(t,x)<-\epsilon <0< \epsilon<\lambda_{m+1}(t,x) < \cdots < \lambda_n(t,x),
\end{equation}
and, for every $i \in \ens{1,\ldots,n-1}$,
\begin{equation}\label{hyp speeds bis}
\lambda_{i+1}(t,x)-\lambda_i(t,x)>\epsilon.
\end{equation}
Assumptions \eqref{hyp speeds} and \eqref{hyp speeds bis} will be commented, respectively, in Remarks \ref{rem hyp 2} and \ref{rem hyp bis} below.

All along this paper, for a vector (or vector-valued function) $v \in \R^n$ and a matrix (or matrix-valued function) $A \in \R^{n \times n}$, we use the notation
$$
v=\begin{pmatrix} v_- \\ v_+ \end{pmatrix},
\qquad
A=\begin{pmatrix} A_{--} & A_{-+} \\ A_{+-} & A_{++} \end{pmatrix},
$$
where $v_- \in \R^m$, $v_+ \in \R^{p}$ and $A_{--} \in \R^{m \times m}$, $A_{-+} \in \R^{m \times p}$, $A_{+-} \in \R^{p \times m},$ $A_{++} \in \R^{p \times p}$.

We will always assume the following regularities for the parameters involved in the system \eqref{syst init}:
\begin{equation}\label{reg parameters}
\begin{array}{c}
\Lambda \in C^1([0,+\infty)\times [0,1])^{n \times n},
\qquad
M \in C^0([0,+\infty)\times[0,1])^{n \times n},
\qquad
Q \in C^0([0,+\infty))^{p \times m}, \\
\ds \Lambda, \px{\Lambda}, M \in L^{\infty}((0,+\infty)\times(0,1))^{n \times n},
\qquad
Q \in L^{\infty}(0,+\infty)^{p \times m}.
\end{array}
\end{equation}

In this article, we use the notion of ``solution along the characteristics'' or ``broad solution'' for the system \eqref{syst init}.
The necessary background on this notion is given in Appendix \ref{sect broad sol} (see also \cite[Section 3.4]{Bre00} for more information).
For the moment we only need to know that, for every $F \in L^{\infty}((0,+\infty)\times(0,1))^{m \times n}$, $t^0\geq 0$ and $y^0 \in L^2(0,1)^n$, there exists a unique (broad) solution $y \in  C^0([t^0,+\infty);L^2(0,1)^n)$ to the system \eqref{syst init} with
\begin{equation}\label{feedback law}
u(t)=\int_0^1 F(t,\xi) y(t,\xi) \, d\xi.
\end{equation}
The relation \eqref{feedback law} will be called the feedback law and the function $F$ will be called the state-feedback gain function.

Let us now give the notion of stability that we are interested in this article (see, for example, \cite[Definition]{BB98}, \cite[Section 3.2]{2005-Bacciotti-Rosier-book} and \cite[Definitions 11.11 and 11.27]{2007-Coron-book} for time-varying systems in finite dimension).

\begin{definition}\label{def stab FT}
Let $T>0$.
We say that the system \eqref{syst init} with feedback law \eqref{feedback law} is finite-time stable with settling time $T$ if the following two properties hold:
\begin{enumerate}[(i)]
\item\label{def stabi}\textbf{Finite-time global attractor.} For every $t^0\geq 0$ and $y^0\in L^2(0,1)^n$,
\begin{equation}\label{attracT}
y(t^0+T,\cdot)=0.
\end{equation}

\item\label{def stabl}\textbf{Uniform stability.}
For every $\varepsilon>0$, there exists $\delta>0$ such that, for
every $t^0\geq 0$ and $y^0\in L^2(0,1)^n$,
\begin{equation}\label{FT cond}
\left(\norm{y^0}_{L^2(0,1)^n}\leq \delta\right)
\quad \Longrightarrow \quad
\left( \norm{y(t,\cdot)}_{L^2(0,1)^n}\leq \varepsilon,\quad \forall t \geq t^0 \right).
\end{equation}
\end{enumerate}
\end{definition}

\begin{remark}
The property \eqref{FT cond} guarantees that, inside any time interval of the form $[t^0,t^0+T]$, the solution is controlled solely by its value at the initial time $t^0$, even if this time $t^0$ is very large.
For our system \eqref{syst init} this property is in fact a consequence of the first property \eqref{attracT} and that the state-feedback gain function $F$ is in $L^\infty((0,+\infty) \times (0,1))^{m\times n}$ (see Remark \ref{rem FTGA implies US}). 
Such an implication is in general not true for time-dependent hyperbolic systems.
A simple example is the following transport equation:
$$
\left\{\begin{array}{l}
\ds \pt{y}(t,x)- \px{y}(t,x)=0, \\
\ds y(t,1)=f(t)\int_0^1 y(t,\xi)d\xi, \\
\ds y(t^0,x)=y^0(x),
\end{array}\right.
$$
where $f \in C^\infty([0,+\infty))$ is such that, for every $k \in \N$,
$$
\left\{\begin{array}{l}
\ds f(t)=0, \quad \forall t\in [2k,2k+1], \\
\ds f(t)=t, \quad \forall t\in \left[2k+\frac{5}{4},2k+\frac{7}{4}\right],
\end{array}\right.
$$
(note that $f \not\in L^{\infty}(0,+\infty)$).
Then the finite-time global attractor property holds (with $T=3$) but the uniform stability property does not hold (consider the sequences $y^0_{\delta}(x)=\delta$ for every $x \in (0,1)$ and $t^0_\delta=2\ceil{\frac{1}{\delta}}+\frac{5}{4}$, where $\ceil{\cdot}$ denotes the ceiling function).
\end{remark}

\begin{remark}
As we are trying to find a state-feedback gain function $F$ so that \eqref{syst init} with feedback law \eqref{feedback law} is finite-time stable, let us first point out that, in general, $F=0$ does not work.
A simple example is provided by the $2 \times 2$ system with constant coefficients ($t^0=0$ to simplify)
$$
\left\{\begin{array}{l}
\ds \pt{y_-}(t,x)-\px{y_-}(t,x)=-c y_+(t,x), \\
\ds \pt{y_+}(t,x)+\px{y_+}(t,x)=-c y_-(t,x), \\
y_-(t,1)=0, \quad y_+(t,0)=y_-(t,0),  \\
y(0,x)=y^0(x),
\end{array}\right.
$$
which is exponentially unstable for $c>\pi$ (see e.g. \cite[Proposition 5.12]{BC16} with $y_-(t,x)=S_1(t,1-x)$ and $y_+(t,x)=S_2(t,1-x)$), and thus not finite-time stable.
\end{remark}

\subsection{The characteristics}
\label{subsection:char}

To state the main result of this paper we need to introduce the characteristic curves associated with system \eqref{syst init}.
To this end, it is convenient to first extend $\Lambda$ to a function of $\R^2$ (still denoted by $\Lambda$).

\begin{remark}\label{rem ext Lambda}
This extension procedure can be done in such a way that the properties \eqref{Lambda diag}, \eqref{hyp speeds}, \eqref{hyp speeds bis} and \eqref{reg parameters} remain valid on $\R^2$.
We can take for instance
$$
\bar{\lambda}_i(t,x)=
\left\{\begin{array}{ll}
\ds \lambda_i(t,x) & \mbox{ if } t \geq 0,\\
\ds \lambda_i(0,x)+\delta\left(\lambda_i(0,x)-\lambda_i\left(1-e^{t/\delta},x\right)\right) & \mbox{ if } t<0,
\end{array}\right.
$$
where $\delta>0$ is small enough so that $-\epsilon+4\delta\max_i \norm{\lambda_i}_{L^{\infty}((0,1)\times(0,1))}<-\epsilon/2$ to guarantee the properties \eqref{hyp speeds} and \eqref{hyp speeds bis} with $\epsilon/2$ in place of $\epsilon$.
This extends the function to $\R\times[0,1]$.
We can use a similar procedure to then extend it to $\R^2$.
We can check that the results of this paper do not depend on such a choice of extension (all the important data are uniquely determined on the domain of interest $(0,+\infty)\times(0,1)$).
\end{remark}

\subsubsection{The flow}

For every $i\in \{1,\cdots,n\}$,  let $\chi_i$ be the flow associated with $\lambda_i$, i.e. for every $(t,x) \in \R \times \R$, the function $s \mapsto \chi_i(s;t,x)$ is the solution to the ODE
\begin{equation}\label{ODE xi}
\left\{\begin{array}{l}
\ds \pas{\chi_i}(s;t,x)=\lambda_i(s,\chi_i(s;t,x)), \quad \forall s \in \R, \\
\ds \chi_i(t;t,x)=x.
\end{array}\right.
\end{equation}
The existence and uniqueness of the solution to the ODE \eqref{ODE xi} follows from the (local) Cauchy-Lipschitz theorem and this solution is global since $\lambda_i$ is bounded (by the finite time blow-up theorem, see e.g. \cite[Theorem II.3.1]{Har02}).
The uniqueness of the solution to the ODE \eqref{ODE xi} also yields the group property
\begin{equation}\label{chi invariance}
\chi_i\left(\sigma;s,\chi_i(s;t,x)\right)=\chi_i(\sigma;t,x), \quad \forall \sigma \in \R.
\end{equation}

\noindent
By classical regularity results on ODEs (see e.g. \cite[Theorem V.3.1]{Har02}), $\chi_i$ has the regularity
\begin{equation}\label{reg xi}
\chi_i \in C^1(\R^3),
\end{equation}
and, for every $s,t,x \in \R$, we have
\begin{equation}\label{deriv chi}
\pt{\chi_i}(s;t,x)=-\lambda_i(t,x)e^{\int_t^s \px{\lambda_i}(\theta,\chi_i(\theta;t,x)) \, d\theta},
\qquad
\px{\chi_i}(s;t,x)=e^{\int_t^s \px{\lambda_i}(\theta,\chi_i(\theta;t,x)) \, d\theta}.
\end{equation}

\noindent
Note in particular that
\begin{equation}\label{chi increases}
\left\{\begin{array}{cl}
\ds \pt{\chi_i}(s;t,x)>0 & \mbox{ if } i \in \ens{1,\ldots,m}, \\
\ds \pt{\chi_i}(s;t,x)<0 & \mbox{ if } i \in \ens{m+1,\ldots,n}, \\
\ds \px{\chi_i}(s;t,x)>0. & \\
\end{array}\right.
\end{equation}

\subsubsection{The entry and exit times}

For every $i\in \{1,\cdots,n\}$, $t\in \R$ and $x \in [0,1]$, let $\ssin_i(t,x),\ssout_i(t,x) \in \R$ be the entry and exit times of the flow $\chi_i(\cdot;t,x)$ inside the domain $[0,1]$, i.e. the respective unique solutions to
\begin{equation}\label{def sin sout}
\left\{\begin{array}{l}
\chi_i(\ssin_i(t,x);t,x)=1, \qquad \chi_i(\ssout_i(t,x); t, x)=0, \qquad \mbox{ if } i \in \ens{1,\ldots,m}, \\
\chi_i(\ssin_i(t,x);t,x)=0, \qquad \chi_i(\ssout_i(t,x);t,x)=1, \qquad  \mbox{ if } i \in \ens{m+1,\ldots,n}.
\end{array}\right.
\end{equation}
The existence and uniqueness of $\ssout_i(t,x)$ and $\ssin_i(t,x)$ are guaranteed by the assumption \eqref{hyp speeds}.
Note that we always have
\begin{equation}\label{tentredeux}
\ssin_i(t,x) \leq t \leq \ssout_i(t,x)
\end{equation}
and the cases of equalities are given by
\begin{equation}\label{sin is zero}
\left\{\begin{array}{l}
\ssin_i(t,x)=t \quad \Longleftrightarrow \quad x=1, \qquad \ssout_i(t,x)=t \quad \Longleftrightarrow \quad x=0, \qquad \mbox{ if } i \in \ens{1,\ldots,m}, \\
\ssin_i(t,x)=t \quad \Longleftrightarrow \quad x=0, \qquad \ssout_i(t,x)=t \quad \Longleftrightarrow \quad x=1, \qquad  \mbox{ if } i \in \ens{m+1,\ldots,n}.
\end{array}\right.
\end{equation}

\noindent
It readily follows from \eqref{chi invariance} and the uniqueness of $\ssin_i,\ssout_i$ that, for every $s \in [\ssin_i(t,x),\ssout_i(t,x)]$,
\begin{equation}\label{invariance}
\ssin_i\left(s,\chi_i(s;t,x)\right)=\ssin_i(t,x),
\quad
\ssout_i\left(s,\chi_i(s;t,x)\right)=\ssout_i(t,x).
\end{equation}

\noindent
From \eqref{reg xi} and by the implicit function theorem, we have
\begin{equation}\label{reg sin sout}
\ssin_i, \ssout_i \in C^1(\R \times [0,1]).
\end{equation}
Moreover, integrating the ODE \eqref{ODE xi} and using the assumption \eqref{hyp speeds}, we have the following bounds, valid for every $t \in \R$ and $x \in [0,1]$,
\begin{equation}\label{bounds sin sout}
t-\ssin_i(t,x)<\frac{1}{\epsilon}, \quad \ssout_i(t,x)-t<\frac{1}{\epsilon}.
\end{equation}
On the other hand, differentiating \eqref{def sin sout} and using \eqref{chi increases} with \eqref{hyp speeds}, we see that, for every $t \in \R$ and $x \in [0,1]$, we have
\begin{equation}\label{increases}
\left\{\begin{array}{cl}
\ds \pt{\ssin_i}(t,x)>0, \quad \pt{\ssout_i}(t,x)>0, & \\
\ds \px{\ssin_i}(t,x)>0, \quad \px{\ssout_i}(t,x)>0 & \mbox{ if } i \in \ens{1,\ldots,m}, \\
\ds \px{\ssin_i}(t,x)<0, \quad \px{\ssout_i}(t,x)<0 & \mbox{ if } i \in \ens{m+1,\ldots,n}.
\end{array}\right.
\end{equation}

\noindent
Finally, from the assumption \eqref{hyp speeds} and classical results on comparison for ODEs (see e.g. \cite[Corollary III.4.2]{Har02}), we have, for every $t \in \R$ and $x \in [0,1]$,
\begin{equation}\label{comparison ssout}
\left\{\begin{array}{c}
\ssin_m(t,x)<\ldots<\ssin_1(t,x) \quad \mbox{ if } x \neq 1,
\quad
\ssout_1(t,x)<\ldots<\ssout_m(t,x) \quad \mbox{ if } x \neq 0,
\\
\ssin_{m+1}(t,x)<\ldots<\ssin_n(t,x) \quad \mbox{ if } x \neq 0,
\quad
\ssout_n(t,x)<\ldots<\ssout_{m+1}(t,x) \quad \mbox{ if } x \neq 1.
\end{array}\right.
\end{equation}

\subsection{Main result and comments}

We are  now in position to state the main result of this paper:
\begin{theorem}\label{main thm}
Let $\Lambda$, $M$ and $Q$ satisfy \eqref{Lambda diag}, \eqref{hyp speeds}, \eqref{hyp speeds bis} and \eqref{reg parameters}.
Then, there exists a state-feedback gain function $F\in L^\infty((0,+\infty) \times (0,1))^{m\times n}$ such that the system \eqref{syst init} with feedback law \eqref{feedback law} is finite-time stable with settling time $\Topt$ defined by
\begin{equation}\label{def Topt}
\Topt=\sup_{t^0 \geq 0} \, \ssout_{m+1} \left(\ssout_m\left(t^0,1\right),0\right)-t^0.
\end{equation}
Moreover, if for some $\tau>0$, $\Lambda$, $M$ and $Q$ are $\tau$-periodic with respect to time (that is $\Lambda(t+\tau,x)=\Lambda(t,x)$ for every $t \geq 0$ and $x \in [0,1]$, same for $M$ and $Q$) then one can also impose to $F$ to be $\tau$-periodic with respect to time (almost everywhere).
\end{theorem}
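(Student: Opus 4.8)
The plan is to construct $F$ by the backstepping method: through a chain of explicit transformations, reduce \eqref{syst init} to a target system that is manifestly finite-time stable with settling time $\Topt$, prove that these transformations (and the kernels they involve) are well defined and bounded, and read $F$ off from them; this occupies Sections~\ref{sect syst transfo} and~\ref{sect exist kern}. As a first step I would normalize the coupling matrix: writing $D=\diag(M_{11},\dots,M_{nn})$, I look for a diagonal, uniformly invertible change of unknowns $y=\Phi(t,x)z$, $\Phi=\diag(\phi_1,\dots,\phi_n)$, so that $z$ solves a system of the same form with $M$ replaced by a matrix with vanishing diagonal and $Q$ replaced by $\tilde Q:=Q\,\Phi_-(\cdot,0)$. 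Each $\phi_i$ is obtained by integrating the scalar transport equation $\partial_t\phi_i+\lambda_i\partial_x\phi_i=M_{ii}\phi_i$ along the flow $\chi_i$, normalized to equal $1$ on the part of the boundary through which $\chi_i$ enters $[0,1]$; by \eqref{reg xi}, \eqref{reg parameters} and \eqref{bounds sin sout} this yields $\phi_i\in C^1$, bounded and bounded away from $0$, and since $\phi_i(t,1)=1$ for $i\le m$ and $\phi_i(t,0)=1$ for $i>m$ the control boundary condition and the homogeneous structure at $x=0$ are preserved. From now on $M$ may thus be assumed to have zero diagonal.

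Next I would map the normalized system, by a Volterra transformation of the second kind
\[
\gamma(t,x)=z(t,x)-\int_0^x K(t,x,\xi)\,z(t,\xi)\,d\xi ,
\]
to a target system in which $\gamma_-$ is fully decoupled (it satisfies a pure transport equation and $\gamma_-(t,1)=0$), while $\gamma_+$ is driven only by $\gamma_-$ (through an integral source term) and by $\gamma_+(t,0)=\tilde Q(t)\gamma_-(t,0)$. Differentiating the relation above in $t$ and in $x$, substituting the two equations and integrating by parts, one finds that $K$ must solve a linear first-order hyperbolic system — the kernel equation — on the time-dependent triangular prism $\ens{(t,x,\xi):t\ge 0,\ 0\le\xi\le x\le 1}$, of the schematic form
\[
\partial_t K+\Lambda(t,x)\,\partial_x K+\partial_\xi\!\big(K\,\Lambda(t,\xi)\big)=\bigl(\text{terms of order zero in }K\bigr),
\]
together with boundary conditions on the faces $\ens{\xi=x}$ and $\ens{\xi=0}$ of the prism, coming respectively from matching the pointwise terms (this is where the diagonal form of $\Lambda$ and the pairwise separation \eqref{hyp speeds} of the $\lambda_i$ are used) and from the boundary relation $z_+(t,0)=\tilde Q(t)z_-(t,0)$. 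To obtain precisely this target structure — and hence the sharp settling time $\Topt$ — one has, in the spirit of \cite{HDMVK16,HVDMK19,CHO17}, to impose additional constraints on the block $K_{+-}$; these over-determine the system, and their compatibility must be verified. Once $K$ is available, the feedback is read off at $x=1$: since $\Phi_-(\cdot,1)=\Id$ the control is $u(t)=z_-(t,1)$, so prescribing $u$ so that $\gamma_-(t,1)=0$ gives $u(t)=\int_0^1 F(t,\xi)y(t,\xi)\,d\xi$ with $F$ an explicit expression built from the first $m$ rows of $K(\cdot,1,\cdot)$ and from $\Phi^{-1}$.

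The analytic core of the proof, carried out in Section~\ref{sect exist kern}, is to show that the kernel equation is well posed, i.e. admits a bounded solution $K$ (in fact piecewise $C^1$); this is also where the time dependence produces genuinely new difficulties. I would integrate the kernel system along its own characteristics — which, because of the $\partial_t K$ term, are curves in the three-dimensional space $(t,x,\xi)$, governed by $\Lambda(\cdot,x)$ and $\Lambda(\cdot,\xi)$ — so as to recast it as a closed system of integral equations, and then solve the latter by successive approximations / a fixed-point argument in a space of bounded functions. In contrast with the autonomous theory, the domains of dependence are no longer polyhedral and must be controlled by means of \eqref{deriv chi}, \eqref{bounds sin sout}, \eqref{increases} and \eqref{comparison ssout}, and the over-determined constraints on $K_{+-}$ have to be propagated consistently; I expect these two issues to be the main obstacle. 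The resulting bound on $K$ gives $F\in L^\infty((0,+\infty)\times(0,1))^{m\times n}$ and ensures that the transformation and its (Volterra) inverse are bounded on $L^2(0,1)^n$ uniformly in $t$.

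With these steps in hand, the finite-time global attractor property follows by a direct computation on the target system. Since $\gamma_-(t,1)=0$, the transport structure and \eqref{invariance} force $\gamma_-(t,\cdot)\equiv 0$ as soon as $t\ge\ssout_m(t^0,1)$: for such $t$ and any $x\in[0,1]$, $i\le m$, the characteristic of mode $i$ through $(t,x)$ has entered $[0,1]$ through $x=1$ after time $t^0$, because $\ssin_i(t,x)\ge\ssin_m(t,0)$ by \eqref{increases}--\eqref{comparison ssout} and $\ssin_m(t,0)\ge t^0\Longleftrightarrow t\ge\ssout_m(t^0,1)$. After that time the integral source and the boundary datum at $x=0$ of the $\gamma_+$-equation vanish, so the same argument gives $\gamma_+(t,\cdot)\equiv 0$ once $t\ge\ssout_{m+1}\!\big(\ssout_m(t^0,1),0\big)$; taking the supremum over $t^0\ge 0$ and recalling \eqref{def Topt} we get $\gamma(t^0+\Topt,\cdot)=0$. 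As $y\mapsto z$ is pointwise invertible and $z\mapsto\gamma$ is an invertible Volterra transformation, this yields $y(t^0+\Topt,\cdot)=0$, which is \eqref{attracT}. The uniform stability property \eqref{FT cond} then follows from $F\in L^\infty$ — via Remark~\ref{rem FTGA implies US}, or directly from the broad-solution estimates of Appendix~\ref{sect broad sol}, which bound $\norm{y(t,\cdot)}_{L^2(0,1)^n}$ on $[t^0,t^0+\Topt]$ by $\norm{y^0}_{L^2(0,1)^n}$ times a constant independent of $t^0$. Finally, if $\Lambda$, $M$ and $Q$ are $\tau$-periodic in time, then the flows satisfy $\chi_i(s+\tau;t+\tau,x)=\chi_i(s;t,x)$, so $\Phi$ can be chosen $\tau$-periodic and the data of the kernel equation are $\tau$-periodic; by the uniqueness obtained in the previous step the kernel $K$ is then $\tau$-periodic, and therefore so is $F$.
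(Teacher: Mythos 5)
Your proposal correctly identifies the overall architecture (diagonal normalization, then integral transformation to a target system, then kernel well-posedness via characteristics and a fixed point), and your closing argument for the settling time $\Topt$ of the target system is essentially the paper's Proposition~\ref{final targ syst stab}. But there is a genuine gap in the middle step.

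You claim that a \emph{single} Volterra transformation of the second kind can reach a target system in which ``$\gamma_-$ is fully decoupled (it satisfies a pure transport equation and $\gamma_-(t,1)=0$)''. This is not achievable. With a Volterra transformation, the target source term for the $-$ block is $G^2_{--}(t,x)\gamma_-(t,0)$ with
$G^2_{--}(t,x)=-K_{--}(t,x,0)\Lambda_{--}(t,0)-K_{-+}(t,x,0)\Lambda_{++}(t,0)Q^1(t)$; to make $\gamma_-$ pure transport one would need $G^2_{--}\equiv 0$, but the diagonal boundary condition of the kernel equations at $\xi=x$ already pins down the entries $k_{ij}(t,x,x)$ and forces incompatibilities at $(t,0,0)$, so the paper explicitly records (Remark~\ref{rem G2+- too}) that $G^2_{--}=0$ cannot be imposed unless $m=1$. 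What the Volterra step \emph{can} do, by adding constraints at $\xi=0$ on the blocks $K_{--}$ and $K_{-+}$ (not $K_{+-}$ as you wrote, which sits in the wrong rows), is make $G^2_{--}$ strictly lower triangular — at the cost of introducing jump discontinuities in the kernel across the surfaces $\xi=\psi_{ij}(t,x)$. That triangular structure only yields finite-time stability with the cascaded settling time of Remark~\ref{rem S2 stable}, which is strictly larger than $\Topt$ as soon as $m>1$.

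The missing ingredient is a third, genuinely different transformation: after the Volterra step, the paper applies a Fredholm integral transformation $\gamma(t,x)=z(t,x)-\int_0^1 H(t,x,\xi)z(t,\xi)\,d\xi$ (the integral runs over the full interval, not a triangle), whose kernel $H$ has only the $H_{--}$ block nonzero and is itself strictly lower triangular; this is what removes $G^2_{--}$ and produces the target $(0,G^3,0,Q^1)$ with $G^3_{--}=0$ that your final paragraph then analyses. Unlike Volterra transformations, Fredholm transformations are not automatically invertible, and establishing invertibility — together with the solvability of the Fredholm equation \eqref{def F2} defining $F^2$ — relies precisely on the strict triangular structure inherited from $G^2_{--}$. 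Your read-off of $F$ from $\Phi$ and the first $m$ rows of $K(\cdot,1,\cdot)$ is therefore incomplete: in the actual construction $F$ is assembled by composing \eqref{def F}, \eqref{def F1} and \eqref{def F2}, and hence also involves $H$.
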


Let us remark that, thanks to \eqref{tentredeux}, \eqref{sin is zero} and \eqref{bounds sin sout}, we always have
$$0<\Topt<\frac{2}{\epsilon}.$$
Note as well, thanks to \eqref{comparison ssout} and the first line in \eqref{increases}, that we have
$$
\Topt=\max_{j \in \ens{m+1,\ldots,n}} \, \max_{i \in \ens{1,\ldots,m}} \, \sup_{t^0 \geq 0} \, \ssout_j\left(\ssout_i\left(t^0,1\right),0\right)-t^0
.$$

\begin{example}\label{exex}
Theorem \ref{main thm} applies for instance to the following coupled $2 \times 2$ system:
\begin{equation}\label{syst ex}
\left\{\begin{array}{l}
\ds \pt{y_1}(t,x)-\px{y_1}(t,x)=m_{11}(t,x)y_1(t,x)+m_{12}(t,x)y_2(t,x), \\
\ds \pt{y_2}(t,x)+\left(1+\frac{1}{1+t}\right)\px{y_2}(t,x)=m_{21}(t,x)y_1(t,x)+m_{22}(t,x)y_2(t,x), \\
y_1(t,1)=u(t), \quad y_2(t,0)=q(t)y_1(t,0),  \\
y(t^0,x)=y^0(x),
\end{array}\right.
\end{equation}
where $M={(m_{ij})}_{1 \leq i,j \leq 2}$ and $Q=(q)$ are any parameters with the regularity \eqref{reg parameters}.
Let us show how to compute $\Topt$ for this example.
First of all, it is clear that $\chi_1(s;t,x)=-s+t+x$, so that $\ssout_1(t^0,1)=t^0+1$.
On the other hand, we have $\chi_2(s;t,x)=s+\ln(1+s)-t-\ln(1+t)+x$.
Therefore, $h(t^0)=\ssout_2(t^0+1,0)-t^0$ solves $\Psi(h(t^0),t^0)=0$, where
$$\Psi(h,t^0)=h+\ln(1+h+t^0)-2-\ln(2+t^0).$$
Taking the derivative of the relation $\Psi(h(t^0),t^0)=0$ and using the fact that $h \geq 1$ by \eqref{tentredeux}, we see that $h'(t^0) \geq 0$, so that $h$ is non-decreasing.
Since $h \leq 2$ by \eqref{bounds sin sout}, the function $h$ is thus a bounded non-decreasing function and, consequently, $\lim_{t^0 \to +\infty} h(t^0)$ exists and is equal to $\sup_{t^0 \geq 0} h(t^0)=\Topt$.
Writing the relation $\Psi(h(t^0),t^0)=0$ as follows for $t^0>0$
$$h(t^0)+\ln\left(\frac{1}{t^0}+\frac{h(t^0)}{t^0}+1\right)-2-\ln\left(\frac{2}{t^0}+1\right)=0,$$
and letting $t^0 \to +\infty$ we obtain the value $\Topt=2$.
\end{example}

\begin{remark}\label{rem Topt}
Observe that the time $\Topt$ does not depend on the parameters $M$ and $Q$.
It depends only on  $\Lambda$ on $[0,+\infty)\times (0,1)$.
Moreover, this is the best time one can obtain, uniformly with respect to all the possible choices of $M$ and $Q$ (this explains our notation ``$\Topt$'').
More precisely,
$$\Topt=\min E,$$
where $E$ is the set of $T>0$ such that, for every $M$ and $Q$ with the regularity \eqref{reg parameters}, there exists a state-feedback gain function $F \in L^\infty((0,+\infty)\times(0,1))^{m\times n}$ so that the system \eqref{syst init} with feedback law \eqref{feedback law} is finite-time stable with settling time $T$.
Indeed, Theorem \ref{main thm} establishes that $\Topt \in E$, so that $E \neq \emptyset$.
On the other hand, taking $M=0$ and the constant matrix
$$
\begin{aligned}
Q=\begin{array}{c@{\hspace{-5pt}}l}\left(\begin{array}{ccc|c}
 & 0 &&1\\ \hline
 & 0 && 0
\end{array}\right)
& \ \begin{array}{l}\rule{-2mm}{0mm}\Big\}1\\
\rule{-2mm}{0mm}\Big\}p-1
\end{array}\\[-15pt]
\ \begin{array}{c}\underbrace{\rule{12mm}{0mm}}_{m-1}\underbrace{\rule{1mm}{0mm}}_{1}
\end{array}
\end{array}.
\end{aligned}
$$
we can check from the very definition of broad solution (see Definition \ref{def broad sol}) that, if $T<\Topt$, then there exist $t^0 \geq 0$ and $y^0 \in L^2(0,1)^n$ such that the corresponding solution to \eqref{syst init} satisfies $y(t^0+T,\cdot) \neq 0$, whatever $u \in L^\infty(t^0,t^0+T)^m$ is.

Of course, for particular choices of $M$ and $Q$ one may obtain a better settling time (a trivial example being $M=0$ and $Q=0$).
In the case of time-independent systems, the minimal time in which one can achieve the stabilization and related controllability properties has been recently discussed in \cite{CN19} and \cite{HO19} (see also the references therein).
\end{remark}

\begin{remark}
If the speeds do not depend on time, i.e. $\lambda_{\ell}(t,x)=\lambda_{\ell}(x)$ for every $t \geq 0$, then we have a more explicit formula for the time $\Topt$, namely:
\begin{equation}\label{Topt Tindep}
\Topt=\int_0^1 \frac{1}{-\lambda_{m}(\xi)} \,d\xi +\int_0^1 \frac{1}{\lambda_{m+1}(\xi)} \,d\xi.
\end{equation}
The value \eqref{Topt Tindep} is obtained by integrating over $\xi \in [0,1]$ the differential equation satisfied by the inverse functions $\xi \longmapsto \chi_m^{-1}(\xi;t,1)$ and $\xi \longmapsto \chi_{m+1}^{-1}(\xi;t,0)$.
\end{remark}

\begin{remark}\label{rem hyp 2}
The assumption \eqref{hyp speeds} that the negative (resp. positive) speeds are uniformly bounded from above (resp. below), despite not being necessary for the existence of a solution to \eqref{syst init}, is to be expected for the system \eqref{syst init} to be finite-time stable.
This is an issue that is not specific to systems and that already occurs for a single equation.
Indeed, let us consider for instance the equation with speed $\lambda(t)=-e^{-t}$ (and $t^0=0$ to simplify):
$$
\left\{\begin{array}{l}
\ds \pt{y}(t,x)-e^{-t} \px{y}(t,x)=0,\\
y(t,1)=u(t), \\
y(0,x)=y^0(x).
\end{array}\right.
$$
Then, whatever $y^0 \in L^2(0,1)$ and $u \in L^\infty(0,+\infty)$ are, if $y^0 \neq 0$ in a neighborhood of $1$ we have
$$y(T,\cdot) \neq 0, \quad \forall T>0.$$
This is easily seen thanks to the explicit representation of the solution (obtained by the characteristic method):
$$
y(t,x)=
\left\{\begin{array}{cl}
y^0\left(1-(e^{-t}-x)\right) & \mbox{ if } 0<x<e^{-t}, \\
\ds u\left(\ln \left(\frac{1}{1+e^{-t}-x}\right)\right) & \mbox{ if } e^{-t}<x<1.
\end{array}\right.
$$
\end{remark}

\begin{remark}\label{rem hyp bis}
Contrary to \eqref{hyp speeds}, the assumption \eqref{hyp speeds bis} is mainly technical.
This assumption is needed because we will have to divide in the sequel by the quantities $\lambda_j-\lambda_i$ (see in particular \eqref{def R} below) and we will need this inverse function to be bounded.
However, this condition is clearly not necessary for some systems \eqref{syst init} to be finite-time stable.
Indeed, consider for instance the following $3 \times 3$ system:
\begin{equation}\label{syst rem hyp bis}
\left\{\begin{array}{l}
\ds \pt{y_1}(t,x)-\px{y_1}(t,x)=y_2(t,x), \\
\ds \pt{y_2}(t,x)-\left(1-\frac{e^{-t}}{2}\right)\px{y_2}(t,x)=0, \\
\ds \pt{y_3}(t,x)+\px{y_3}(t,x)=0, \\
y_1(t,1)=u_1(t),
\quad
y_2(t,1)=u_2(t),
\quad y_3(t,0)=y_2(t,0),  \\
y(t^0,x)=y^0(x).
\end{array}\right.
\end{equation}
Then, using the characteristic method it is not difficult to see that the system \eqref{syst rem hyp bis} with $u_1=u_2=0$ is finite-time stable with settling time $T+1$, where $T$ is the unique positive solution to the equation $T+\frac{e^{-T}}{2}=\frac{3}{2}$.

\end{remark}

\begin{remark}\label{rem m=n}
The case $m=n \geq 1$ (no boundary conditions at $x=0$) is easier and does not require the techniques presented in this paper.
Indeed, it can be checked using for instance the constructive method of \cite{LR03, Wan06} that in this case the system \eqref{syst init} with $u=0$ is finite-time stable with settling time equal to $\sup_{t^0 \geq 0} \ssout_m(t^0,1)-t^0$.
\end{remark}

\section{System transformations}\label{sect syst transfo}

The goal of this section is to show that we can use several invertible transformations in order to remove or transform some coupling terms in the initial system \eqref{syst init} and to obtain in the end a system for which we can directly establish that it is finite-time stable with settling time $\Topt$.
The plan of this section is as follows:

\begin{enumerate}[1)]
\item
In Section \ref{sect remov diag}, we use a diagonal transformation to remove the diagonal terms in $M$.

\item
Next, in Section \ref{reduc to triang}, inspired by the seminal works \cite{Col77, Rus78-JMAA, 2002-Balogh-Krstic-EJC} for equations and \cite{CVKB13,HDM15,HDMVK16,HVDMK19} for hyperbolic systems, we use a Volterra transformation of the second kind to transform the system obtained by the previous step into a new system in the so-called ``control normal form'' and with an additional triangular structure for the couplings.

\item
Finally, in Section \ref{sect stabi S2}, inspired by the work \cite{CHO17} for time-independent systems, we use an invertible Fredholm integral transformation to transform the system obtained by the previous step into a new system with a very simple coupling structure that allows us to readily see that it is finite-time stable with settling time $\Topt$.

\end{enumerate}

In Section \ref{sect syst transfo} only the properties of the transformations and new systems are discussed.
The existence of the transformations is the main technical point of this paper and will be proved in Section \ref{sect exist kern} below for the sake of the presentation.

Finally, because of the nature of the transformations that we will use in the sequel, we are led to consider a class of systems that is slightly more general than \eqref{syst init}.
All the systems of this paper will have the following form:
\begin{equation}\label{system general}
\left\{\begin{array}{l}
\ds \pt{y}(t,x)+\Lambda(t,x) \px{y}(t,x)=M(t,x) y(t,x)+G(t,x)y(t,0), \\
y_-(t,1)=\ds \int_0^1 F(t,\xi) y(t,\xi) \, d\xi, \quad y_+(t,0)=Q(t)y_-(t,0),  \\
y(t^0,x)=y^0(x),
\end{array}\right.
\end{equation}
where $M$ and $Q$ will have at least the regularity \eqref{reg parameters}, $F \in L^\infty((0,+\infty) \times (0,1))^{m\times n}$ and
$$G \in C^0([0,+\infty)\times[0,1])^{n \times n} \cap L^{\infty}((0,+\infty)\times(0,1))^{n \times n}.$$
Therefore, \eqref{system general} is similar to \eqref{syst init} but has the extra term with $G$.
In what follows, we will also refer to a system of the form \eqref{system general} as
$$(M,G,F,Q).$$

Hyperbolic equations similar to $(0,G,F,Q)$ were called in ``control normal form'' in the pioneering work \cite[p. 212]{Rus78-JMAA} for the similarity with the finite-dimensional setting (see also the earlier paper \cite{Bru70}).

\subsection{Removal of the diagonal terms}\label{sect remov diag}

In this section we just perform a simple preliminary transformation in order to remove the diagonal terms in $M$.
This is only a technical step, which is nevertheless necessary in view of the existence of the transformation that we will use in the next section, see Remark \ref{rem CN solv kern} below.
This step is sometimes called ``exponential pre-transformation'' in the case of time-independent systems (see Remark \ref{rem exp pre} below).
More precisely, the goal of this section is to establish the following result:

\begin{proposition}\label{prop transfo 1}
There exists $M^1={(m^1_{ij})}_{1 \leq i,j \leq n} \in C^0([0,+\infty)\times[0,1])^{n \times n} \cap L^{\infty}((0,+\infty)\times (0,1))^{n \times n}$ with diagonal terms equal to zero:
\begin{equation}\label{diag zero}
m^1_{ii}=0, \quad \forall i \in \ens{1,\ldots,n},
\end{equation}
and there exists $Q^1 \in C^0([0,+\infty))^{p \times m} \cap L^{\infty}(0,+\infty)^{p \times m}$ such that, for every $F^1 \in L^{\infty}((0,+\infty)\times(0,1))^{m \times n}$, there exists $F \in L^{\infty}((0,+\infty)\times(0,1))^{m \times n}$ such that the following property holds for every $T>0$:
\begin{multline}\label{stab 10}
(M^1,0,F^1,Q^1) \text{ is finite-time stable with settling time } T
\\
\Longrightarrow \quad (M,0,F,Q) \text{ is finite-time stable with settling time } T.
\end{multline}
\end{proposition}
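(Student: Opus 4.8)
The plan is to construct the change of unknowns by means of a diagonal, time-and-space dependent scaling $y(t,x) = D(t,x) z(t,x)$, where $D(t,x) = \diag(d_1(t,x),\ldots,d_n(t,x))$ is a positive diagonal matrix chosen so that the transport part of the equation is preserved while the diagonal of the zeroth-order coupling matrix is annihilated. Substituting into $\pt{y} + \Lambda \px{y} = M y$ and using that $D$ and $\Lambda$ are both diagonal (so they commute), one finds that $z$ solves $\pt{z} + \Lambda \px{z} = M^1 z$ with
\begin{equation*}
M^1 = D^{-1} M D - D^{-1}\left(\pt{D} + \Lambda \px{D}\right).
\end{equation*}
Since $D^{-1}MD$ has the same diagonal as $M$ (conjugation by a diagonal matrix does not change diagonal entries) and $D^{-1}(\pt{D}+\Lambda\px{D})$ is diagonal with $i$-th entry $d_i^{-1}(\pt{d_i}+\lambda_i\px{d_i})$, the condition \eqref{diag zero} amounts to requiring $\pt{d_i} + \lambda_i \px{d_i} = m_{ii} d_i$ for each $i$. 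This is a linear transport ODE along the characteristics $\chi_i$, which I would solve explicitly by setting $d_i(t,x) = \exp\left(\int_{\ssin_i(t,x)}^{t} m_{ii}(\sigma,\chi_i(\sigma;t,x))\,d\sigma\right)$; the integrand is bounded by \eqref{reg parameters}, and the length of the integration interval is at most $1/\epsilon$ by \eqref{bounds sin sout}, so $d_i$ together with $d_i^{-1}$ is bounded above and below by positive constants, and $d_i \in C^0$ (indeed $C^1$ on the interior via \eqref{reg xi}). This gives $M^1 \in C^0 \cap L^\infty$ with the right structure; note $D$ depends only on $M$ and $\Lambda$, not on $F$.

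Next I would track how the boundary conditions and the feedback transform. Writing $D$ in block form $D = \diag(D_-, D_+)$ with $D_- \in \R^{m\times m}$, $D_+ \in \R^{p\times p}$ diagonal and invertible, the boundary condition $y_+(t,0) = Q(t) y_-(t,0)$ becomes $z_+(t,0) = Q^1(t) z_-(t,0)$ with $Q^1(t) = D_+(t,0)^{-1} Q(t) D_-(t,0)$, which inherits the regularity $C^0 \cap L^\infty$; again $Q^1$ depends only on $Q$, $\Lambda$ and the diagonal of $M$. The control boundary condition $y_-(t,1) = \int_0^1 F(t,\xi) y(t,\xi)\,d\xi$ reads, in terms of $z$, as $D_-(t,1) z_-(t,1) = \int_0^1 F(t,\xi) D(t,\xi) z(t,\xi)\,d\xi$, i.e. $z_-(t,1) = \int_0^1 F^1(t,\xi) z(t,\xi)\,d\xi$ with $F^1 = D_-(\cdot,1)^{-1} F(\cdot,\cdot) D(\cdot,\cdot)$. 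To prove the implication \eqref{stab 10} in the stated direction, however, I must start from a given $F^1$ (the gain that stabilizes $(M^1,0,F^1,Q^1)$) and produce $F$; so I invert the relation: given $F^1 \in L^\infty$, set $F(t,\xi) = D_-(t,1) F^1(t,\xi) D(t,\xi)^{-1}$, which is in $L^\infty((0,+\infty)\times(0,1))^{m\times n}$ because $D, D^{-1}$ are bounded. With this choice, the bijection $y \leftrightarrow z = D^{-1}y$ carries broad solutions of $(M,0,F,Q)$ to broad solutions of $(M^1,0,F^1,Q^1)$ and conversely — this should be checked against Definition \ref{def broad sol} of broad solution in the appendix, but is routine since $D$ is a pointwise multiplier that is $C^1$ along characteristics.

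Finally, I would verify that finite-time stability transfers. Because $\norm{z(t,\cdot)}_{L^2} $ and $\norm{y(t,\cdot)}_{L^2}$ are equivalent uniformly in $t$ (the equivalence constants are $\inf d_i, \sup d_i$, both positive and finite), property \eqref{attracT} for $z$ at time $t^0+T$ immediately gives it for $y$, and the $\varepsilon$–$\delta$ statement \eqref{FT cond} transfers after rescaling $\delta$ by the equivalence constant; the initial data correspondence is $z^0 = D(t^0,\cdot)^{-1} y^0$, still ranging over all of $L^2(0,1)^n$. For the periodicity addendum relevant to Theorem \ref{main thm}: if $\Lambda$, $M$, $Q$ are $\tau$-periodic in time, then the characteristics satisfy $\chi_i(s+\tau;t+\tau,x) = \chi_i(s;t,x)$, hence $\ssin_i$ is $\tau$-periodic in $t$ and so is $d_i$, whence $M^1$, $Q^1$ are $\tau$-periodic and $F$ is $\tau$-periodic whenever $F^1$ is. I expect no genuine obstacle here; the only mildly delicate points are (a) confirming boundedness of $D^{\pm 1}$, which reduces to the bound \eqref{bounds sin sout} on the characteristic travel times, and (b) checking that the multiplier $D$ indeed maps broad solutions to broad solutions in both directions, which I would do by inspecting the defining integral equations along characteristics rather than by a density argument.
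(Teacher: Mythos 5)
Your proposal is correct and is essentially identical to the paper's proof: your diagonal multiplier $D$ is exactly the inverse of the paper's $\Phi$ (whose entries are $\phi_i=d_i^{-1}$ solving $\pt{\phi_i}+\lambda_i\px{\phi_i}+m_{ii}\phi_i=0$), and your formulas for $M^1$, $Q^1$ and $F=D_-(\cdot,1)F^1 D^{-1}$ coincide with \eqref{def M1} and \eqref{def F}. The only detail worth adding is that, as in the paper, $m_{ii}$ must first be extended to negative times so that the integral from $\ssin_i(t,x)$ is well defined for $i>m$ near $t=0$.
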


\subsubsection{Formal computations}

To prove Proposition \ref{prop transfo 1}, the idea is to show that, for every $F^1$, there exists $F$ such that we can transform a solution of $(M,0,F,Q)$ into a solution of $(M^1,0,F^1,Q^1)$.
Let then $y$ be the solution to the system $(M,0,F,Q)$ with state-feedback gain function $F$ to be determined below and initial data $y^0$.
Let $\Phi:[0,+\infty)\times[0,1] \longrightarrow \R^{n \times n}$ be a smooth matrix-valued function and set
\begin{equation}\label{transfo diag}
w(t,x)=\Phi(t,x) y(t,x).
\end{equation}
Let us now perform some formal computations in order to see what $w$ can solve.
Using the equation satisfied by $y$, we have
$$
\pt{w}+\Lambda \px{w}=
\left(\pt{\Phi}+\Phi M
+\Lambda\px{\Phi}\right)y+(-\Phi\Lambda+\Lambda \Phi)\px{y}.
$$
On the other hand, using the boundary condition satisfied by $y$ at $x=0$, we have
\begin{multline*}
w_+(t,0)-Q^1(t)w_-(t,0)
\\
=
\left(\Phi_{+-}(t,0)+\Phi_{++}(t,0)Q(t)-Q^1(t)\Phi_{--}(t,0)-Q^1(t)\Phi_{-+}(t,0)Q(t)\right)y_-(t,0).
\end{multline*}
Finally, at $x=1$, we have
$$
w_-(t,1)-\int_0^1 F^1(t,\xi) w(t,\xi) \, d\xi
=
\int_0^1 \left(\Phi_{--}(t,1)F(t,\xi)-F^1(t,\xi)\Phi(t,\xi)\right)y(t,\xi) \, d\xi + \Phi_{-+}(t,1)y_+(t,1).
$$
Thus, we see that $w$ satisfies at $x=1$ the boundary condition $w_-(t,1)=\int_0^1 F^1(t,\xi) w(t,\xi) \, d\xi$ if $\Phi_{-+}(t,1)=0$ and
\begin{equation}\label{def F}
F(t,\xi)=\Phi_{--}(t,1)^{-1}F^1(t,\xi)\Phi(t,\xi),
\end{equation}
provided that $\Phi_{--}(t,1)$ is also invertible.
Moreover, note that $F$ belongs to $L^{\infty}((0,+\infty)\times (0,1))^{m \times n}$ provided that $F^1$ belongs to this space as well and
\begin{equation}\label{min Phi}
\exists C>0,
\quad
\norm{\Phi_{--}(\cdot,1)^{-1}}_{L^{\infty}(0,+\infty)^{m \times m}} \leq C.
\end{equation}

In summary, $w$ defined by \eqref{transfo diag} is the solution of $(M^1,0,F^1,Q^1)$ with state-feedback gain function $F^1$ (which is assumed to be known) and initial data $w^0(\cdot)=\Phi(0,\cdot)y^0(\cdot)$ if we have the following four properties:
\begin{enumerate}[(i)]
\item\label{Phi comm}
$\Lambda(t,x)\Phi(t,x)=\Phi(t,x)\Lambda(t,x)$ for every $t \geq 0$ and $x \in [0,1]$.

\item\label{Phi inv}
The matrices $\Phi(t,x)$ and $\Phi_{--}(t,0)+\Phi_{-+}(t,0)Q(t)$ are invertible for every $t \geq 0$ and $x \in [0,1]$.

\item\label{Phi feed}
$\Phi_{-+}(t,1)=0$ for every $t \geq 0$ (it then follows with \ref{Phi inv} that $\Phi_{--}(t,1)$ is invertible).

\item
$M^1$ and $Q^1$ are defined by
\begin{equation}\label{def M1}
\left\{\begin{array}{l}
\ds M^1(t,x)=\left(\pt{\Phi}(t,x)+\Lambda(t,x)\px{\Phi}(t,x)+\Phi(t,x)M(t,x)\right)\Phi(t,x)^{-1}, \\
\ds Q^1(t)=\left(\Phi_{+-}(t,0)+\Phi_{++}(t,0)Q(t)\right)\left(\Phi_{--}(t,0)+\Phi_{-+}(t,0)Q(t)\right)^{-1}.
\end{array}\right.
\end{equation}
\end{enumerate}

Finally, it is not difficult to check that the stability property \eqref{stab 10} is indeed satisfied since the state-feedback gain function $F$ is solely determined by the state-feedback gain function $F^1$ and, at every fixed $t \geq 0$, the transformation \eqref{transfo diag} defines an injective (in fact, invertible) map of $L^2(0,1)^n$.

\subsubsection{Existence of the transformation}

Let us now prove the existence of a function $\Phi$ with the properties listed above and which in addition ensures that the condition \eqref{diag zero} on $M^1$ holds.

\begin{proposition}\label{exist transfophi}
There exists $\Phi$ with $\Phi, \pt{\Phi}+\Lambda\px{\Phi} \in C^0([0,+\infty)\times[0,1])^{n \times n} \cap L^{\infty}((0,+\infty)\times (0,1))^{n \times n}$ such that the properties \ref{Phi comm}, \ref{Phi inv}, \ref{Phi feed} and \eqref{min Phi} are satisfied and such that the matrix-valued function $M^1$ defined in \eqref{def M1} satisfies \eqref{diag zero}.
\end{proposition}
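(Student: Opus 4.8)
The plan is to use the commutation constraint \ref{Phi comm} to reduce everything to scalar transport equations. Since for each $(t,x)$ the matrix $\Lambda(t,x)$ is diagonal with pairwise distinct entries (by \eqref{hyp speeds}), its commutant consists of diagonal matrices only; hence any admissible $\Phi$ must be of the form $\Phi=\diag(\phi_1,\ldots,\phi_n)$, and we will choose the scalar functions $\phi_i\colon[0,+\infty)\times[0,1]\to\R$. For such a $\Phi$, property \ref{Phi feed} is automatic (the $(-,+)$ block vanishes identically) and the matrix in \ref{Phi inv} is $\Phi_{--}(t,0)+\Phi_{-+}(t,0)Q(t)=\diag(\phi_1(t,0),\ldots,\phi_m(t,0))$. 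Plugging $\Phi=\diag(\phi_1,\ldots,\phi_n)$ into \eqref{def M1} gives
\[
m^1_{ij}=\frac{\pt{\phi_i}+\lambda_i\px{\phi_i}}{\phi_i}\,\delta_{ij}+\frac{\phi_i}{\phi_j}\,m_{ij},
\]
so that \eqref{diag zero} is equivalent to requiring that each $\phi_i$ solve the linear transport equation $\pt{\phi_i}+\lambda_i\px{\phi_i}=-m_{ii}\phi_i$; all remaining properties will then follow provided the $\phi_i$ are continuous, positive, and bounded together with their reciprocals.

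I would then construct $\phi_i$ by the characteristic method, prescribing $\phi_i\equiv 1$ both on the initial line $\{t=0\}$ and on the inflow part of the lateral boundary (i.e. $\{x=1\}$ when $i\in\ens{1,\ldots,m}$, and $\{x=0\}$ when $i\in\ens{m+1,\ldots,n}$). Concretely, setting $\sigma_i(t,x)=\max\{0,\ssin_i(t,x)\}$, define
\[
\phi_i(t,x)=\exp\!\left(-\int_{\sigma_i(t,x)}^{t}m_{ii}\big(s,\chi_i(s;t,x)\big)\,ds\right).
\]
The regularity \eqref{reg xi} and \eqref{reg sin sout} of $\chi_i$ and $\ssin_i$ together with the continuity of $m_{ii}$ show that $\phi_i\in C^0([0,+\infty)\times[0,1])$; the only point where this is not immediate is the corner where the two pieces of prescribed data meet, and there they agree (both equal the constant $1$). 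Using the group property \eqref{chi invariance} and the invariance \eqref{invariance}, the function $s\mapsto\phi_i(s,\chi_i(s;t,x))$ is $C^1$ with derivative $-m_{ii}(s,\chi_i(s;t,x))\phi_i(s,\chi_i(s;t,x))$, so $\phi_i$ is a broad solution of the transport equation and the directional derivative $\pt{\phi_i}+\lambda_i\px{\phi_i}$, understood in that sense, equals $-m_{ii}\phi_i$, which is continuous and bounded on $[0,+\infty)\times[0,1]$.

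It remains to verify the quantitative statements. For every $(t,x)$ one has $0\le t-\sigma_i(t,x)<1/\epsilon$: either the backward characteristic reaches the lateral boundary first, in which case $t-\sigma_i(t,x)=t-\ssin_i(t,x)<1/\epsilon$ by \eqref{bounds sin sout}, or it reaches $\{t=0\}$ first, in which case $\ssin_i(t,x)<0$ and hence $t-\sigma_i(t,x)=t<t-\ssin_i(t,x)<1/\epsilon$. Consequently the exponent in the formula for $\phi_i$ is bounded in absolute value by $\norm{m_{ii}}_{L^{\infty}((0,+\infty)\times(0,1))}/\epsilon$, so $\phi_i$ and $1/\phi_i$ are bounded uniformly, giving $\Phi,\Phi^{-1}\in C^0\cap L^{\infty}$ (hence \ref{Phi inv}, using also $\phi_i(t,0)>0$) and $\pt{\Phi}+\Lambda\px{\Phi}=-\diag(m_{11}\phi_1,\ldots,m_{nn}\phi_n)\in C^0\cap L^{\infty}$. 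Property \eqref{min Phi} is trivial because, by our choice $\phi_i|_{x=1}\equiv 1$ for $i\le m$, one has $\Phi_{--}(\cdot,1)=\Id$; and \eqref{diag zero} holds by construction (the $M^1$ and $Q^1$ produced by \eqref{def M1} then automatically inherit the regularity claimed in Proposition \ref{prop transfo 1}, being built from $\phi_i$, $1/\phi_i$, $m_{ij}$, $Q$ by products).

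The only genuine care required in this argument is the bookkeeping of the two families of characteristics — those entering the strip through $\{t=0\}$ and those entering through the lateral boundary — and the continuity of $\phi_i$ across their interface; this is the mild obstacle, and once the explicit formula above is in hand the rest is routine. (In the $\tau$-periodic setting, using a $\tau$-periodic extension of $\Lambda$, one drops the data on $\{t=0\}$ and solves for all $t\in\R$, obtaining a $\tau$-periodic $\phi_i$ because every characteristic crosses $[0,1]$ in time less than $1/\epsilon$ and $\lambda_i$, $m_{ii}$ are $\tau$-periodic.)
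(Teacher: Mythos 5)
Your proof is correct and follows essentially the same route as the paper: a diagonal $\Phi=\diag(\phi_1,\ldots,\phi_n)$ with $\phi_i$ given by the exponential of $-\int m_{ii}$ along the backward characteristic, so that $\pt{\phi_i}+\lambda_i\px{\phi_i}+m_{ii}\phi_i=0$ and $\Phi_{--}(\cdot,1)=\Id$. The only (cosmetic) difference is that you truncate the lower integration limit at $\max\{0,\ssin_i(t,x)\}$, whereas the paper integrates from $\ssin_i(t,x)$ after extending $m_{ii}$ to negative times; both choices yield all the stated properties.
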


\begin{proof}

Let $\Phi$ be the diagonal matrix-valued function defined for every $t \geq 0$ and $x \in [0,1]$ by
$$\Phi(t,x)=\diag(\phi_1(t,x),\ldots,\phi_n(t,x)),$$
where, for every $i \in \ens{1,\ldots,n}$,
\begin{equation}\label{sol equ phi_i}
\phi_i(t,x)=e^{-\int_{\ssin_i(t,x)}^t m_{ii}\left(\sigma,\chi_i(\sigma;t,x)\right) \, d\sigma},
\end{equation}
where $m_{ii}$ is extended to negative times by an arbitrary function that keeps the regularity \eqref{reg parameters}.
Clearly, $\phi_i\in C^0([0,+\infty)\times[0,1])$ and it follows from \eqref{bounds sin sout} that $\phi_i \in L^{\infty}((0,+\infty)\times (0,1))$.

It is clear that the first property \ref{Phi comm} holds since $\Lambda$ and $\Phi$ are both diagonal matrices.
Since $\Phi_{-+}=0$, the third property \ref{Phi feed} is automatically satisfied.
It also follows that, to check the second property \ref{Phi inv}, we only need to show that $\Phi(t,x)$ is invertible, which readily follows from the explicit expression of $\phi_i$.
The estimate \eqref{min Phi} is obviously true since $\Phi_{--}(t,1)=\Id_{\R^{m \times m}}$ (recall \eqref{sin is zero}).
Finally, $M^1$ defined in \eqref{def M1} satisfies \eqref{diag zero} since $\phi_i$ satisfies the following linear hyperbolic equation:
$$\pt{\phi_i}(t,x)+\lambda_i(t,x)\px{\phi_i}(t,x)+m_{ii}(t,x)\phi_i(t,x)=0.$$

\end{proof}

\begin{remark}\label{rem exp pre}
There are obviously other possible choices for $\phi_i$, for instance in the time-independent case we can take the slightly simpler function $\phi_i(t,x)=e^{-\int_0^x \frac{m_{ii}(\xi)}{\lambda_i(\xi)} \, d\xi}$ (which coincides with \eqref{sol equ phi_i} only for $i \in \ens{m+1,\ldots,n}$).
\end{remark}

\subsection{Volterra transformation}\label{reduc to triang}

In this section we perform a second transformation to remove some coupling terms of the system.
The system will then have a triangular coupling structure, which is the key point to show later on (Section \ref{sect stabi S2} below) that this system is finite-time stable with settling time $\Topt$.
More precisely, the goal of this section is to establish the following result:

\begin{proposition}\label{prop transfo 2}
There exists a strictly lower triangular matrix $G^2_{--}={(g^2_{ij})}_{1 \leq i,j \leq m} \in C^0([0,+\infty)\times[0,1])^{m \times m} \cap L^\infty((0,+\infty)\times(0,1))^{m \times m}$:
\begin{equation}\label{Gamma cond zero}
g^2_{ij}=0, \quad \forall \, 1 \leq i \leq j \leq m,
\end{equation}
and there exists $G^2_{+-} \in C^0([0,+\infty)\times[0,1])^{p \times m} \cap L^\infty((0,+\infty)\times(0,1))^{p \times m}$ such that, for every $F^2 \in L^{\infty}((0,+\infty)\times(0,1))^{m \times n}$, there exists $F^1 \in L^{\infty}((0,+\infty)\times(0,1))^{m \times n}$ such that the following property holds for every $T>0$:
\begin{multline}\label{stab 11}
(0,G^2,F^2,Q^1) \text{ is finite-time stable with settling time } T
\\
\Longrightarrow \quad (M^1,0,F^1,Q^1) \text{ is finite-time stable with settling time } T,
\end{multline}
where
\begin{equation}\label{syst S2}
G^2=\begin{pmatrix} G^2_{--} & 0 \\ G^2_{+-} & 0 \end{pmatrix}.
\end{equation}
\end{proposition}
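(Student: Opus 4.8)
The strategy is the classical backstepping construction via a Volterra transformation of the second kind, now with time-dependent kernel. Let $y$ be the solution of $(M^1,0,F^1,Q^1)$ with feedback gain $F^1$ to be chosen, and define
\begin{equation*}
w(t,x)=y(t,x)-\int_0^x K(t,x,\xi) y(t,\xi)\, d\xi,
\end{equation*}
where $K=K(t,x,\xi)$ is an $n\times n$ matrix-valued kernel defined on the triangular domain $\{(t,x,\xi):t\ge 0,\ 0\le \xi\le x\le 1\}$, to be determined. Differentiating formally, using the equation $\partial_t y+\Lambda\partial_x y=M^1 y$ satisfied by $y$ and integrating by parts in $\xi$ (this is where we need $M^1$ to have zero diagonal, as produced in Proposition~\ref{prop transfo 1}, see Remark~\ref{rem CN solv kern}), one finds that $w$ solves a system of the form $(0,G^2,F^2,Q^1)$ provided $K$ satisfies a first-order hyperbolic PDE — the \emph{kernel equation} — on the triangular domain, with boundary conditions coming from matching the coupling terms. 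The strictly-lower-triangular requirement \eqref{Gamma cond zero} on $G^2_{--}$ and the structure \eqref{syst S2} (no $y_+(t,0)$ coupling and no $y(t,\cdot)$-block below the first $m$ rows except through $G^2_{+-}$) impose the precise form of the right-hand side and of the boundary data for $K$; following \cite{HDMVK16,HVDMK19,CHO17}, one adds extra conditions on $K$ on the diagonal-type boundaries $\xi=x$ and on the set where the characteristics meet, so that the resulting system has exactly the triangular structure claimed. The new feedback gain is then
\begin{equation*}
F^1(t,\xi)=F^2(t,\xi)+\int_\xi^1 F^2(t,x)\,?\,dx \quad\text{(schematically)},
\end{equation*}
more precisely $F^1$ is obtained from $F^2$ by composing with the bounded invertible Volterra operator $I-\int_0^x K(t,x,\xi)\cdot\,d\xi$ acting at each fixed time; concretely, writing the transformation as $w(t,\cdot)=(I-\mathcal K(t))y(t,\cdot)$, the condition $w_-(t,1)=\int_0^1 F^2(t,\xi)w(t,\xi)d\xi$ translates into $y_-(t,1)=\int_0^1 F^1(t,\xi)y(t,\xi)d\xi$ with $F^1$ determined by $F^2$, $K$, and the inverse kernel.

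**Existence, regularity and boundedness of the kernel.** The well-posedness, regularity $K\in C^0$ (piecewise $C^1$) and — crucially — the uniform $L^\infty$ bound on $K$ on $[0,+\infty)\times\{0\le\xi\le x\le1\}$ are \emph{not} proved here: by the organization announced in Section~\ref{sect syst transfo}, this is deferred to Section~\ref{sect exist kern}. In the present proposition we may therefore assume the existence of such a $K$ with the stated regularity and uniform boundedness. The same applies to the inverse transformation $w\mapsto y$, which is again a Volterra transformation of the second kind $y(t,x)=w(t,x)+\int_0^x L(t,x,\xi)w(t,\xi)d\xi$ whose kernel $L$ is obtained from $K$ by the usual resolvent/Neumann-series argument (or by solving the analogous kernel equation), and which is likewise uniformly bounded; this is what guarantees $F^1\in L^\infty((0,+\infty)\times(0,1))^{m\times n}$ whenever $F^2$ is.

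**Transfer of the stability property.** Once the transformation and its inverse are in hand, \eqref{stab 11} follows as in the previous section. Suppose $(0,G^2,F^2,Q^1)$ is finite-time stable with settling time $T$. Given $y^0\in L^2(0,1)^n$ and $t^0\ge0$, set $w^0(\cdot)=y^0(\cdot)-\int_0^\cdot K(t^0,\cdot,\xi)y^0(\xi)d\xi\in L^2(0,1)^n$; then $w(t,\cdot)$ is the solution of $(0,G^2,F^2,Q^1)$ with initial data $w^0$ at time $t^0$, and the broad-solution framework of Appendix~\ref{sect broad sol} ensures the two systems are equivalent under the (time-dependent, but at each fixed $t$ bounded and invertible) map $\mathcal K(t)$. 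The finite-time global attractor property $w(t^0+T,\cdot)=0$ then gives $y(t^0+T,\cdot)=0$ by applying the inverse transformation at time $t^0+T$. The uniform stability estimate transfers because $\|\mathcal K(t)\|$ and $\|(I-\mathcal K(t))^{-1}\|$ are bounded uniformly in $t\ge0$ (from the uniform $L^\infty$ bounds on $K$ and $L$), so $\|y(t,\cdot)\|_{L^2}\le C\|w(t,\cdot)\|_{L^2}\le C'\|w^0\|_{L^2}\le C''\|y^0\|_{L^2}$. Finally, if $\Lambda$, $M$, $Q$ (hence $M^1$, $Q^1$) are $\tau$-periodic in time, the kernel equation has $\tau$-periodic coefficients and one checks — again deferred to Section~\ref{sect exist kern} — that one may take $K$ $\tau$-periodic in $t$, whence $F^1$ can be taken $\tau$-periodic.

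**Main obstacle.** The genuinely hard part is not in this section at all: it is the well-posedness, regularity, and above all the \emph{uniform-in-time} $L^\infty$ estimate for the kernel $K$ (and its inverse) solving a multi-dimensional hyperbolic PDE on an unbounded-in-$t$ domain with merely $L^\infty$ (not smooth) coefficients — this is precisely what makes the non-autonomous case substantially more delicate than \cite{CVKB13,HDMVK16,CHO17}, and it is why the authors isolate it in Section~\ref{sect exist kern}. Within the present proposition, the only subtlety is the bookkeeping of the extra diagonal/boundary conditions on $K$ needed to force the strictly lower triangular structure \eqref{Gamma cond zero} and the vanishing of the $(\cdot,0)$-couplings in \eqref{syst S2}, which follows the template of \cite{HDMVK16,HVDMK19}.
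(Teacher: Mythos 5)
Your proposal is correct and follows essentially the same route as the paper: a Volterra transformation of the second kind with time-dependent kernel, with the kernel's existence, regularity, uniform $L^\infty$ bound, and the extra boundary conditions at $\xi=0$ enforcing the strictly lower triangular structure all deferred to Section~\ref{sect exist kern} (Theorem~\ref{thm exist kern}). The only small detour is your invocation of the inverse kernel $L$ to define and bound $F^1$: plugging the transformation into the boundary condition at $x=1$ gives the explicit formula $F^1(t,\xi)=K_-(t,1,\xi)+F^2(t,\xi)-\int_\xi^1 F^2(t,\zeta)K(t,\zeta,\xi)\,d\zeta$, so $F^1\in L^\infty$ follows from $K\in L^\infty(\Tau)^{n\times n}$ alone, without needing $L$ (although $L$, or at least injectivity of the Volterra map, is of course needed for the transfer of the finite-time global attractor property).
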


\begin{remark}\label{rem S2 stable}
Thanks to the triangular structure \eqref{Gamma cond zero} and \eqref{syst S2} of $G^2$, we can check from the very definition of broad solution (see Definition \ref{def broad sol}) that the system provided by Proposition \ref{prop transfo 2} with state-feedback gain function equal to zero, i.e. $(0,G^2,0,Q^1)$, is finite-time stable with settling time $T(0,G^2,0,Q^1)$ defined by
$$T(0,G^2,0,Q^1)=\sup_{t^0 \geq 0} \, \ssout_{m+1}\left(T_m(t^0),0\right)-t^0,$$
where
$$
\left\{\begin{array}{l}
T_1(t^0)=\ssout_1(t^0,1), \\
T_i(t^0)=\ssout_i(T_{i-1}(t^0),1), \quad \forall i \in \ens{2,\ldots,m}.
\end{array}\right.
$$
We do not detail this point here because it is not needed, and we refer to the arguments used in the proof of Proposition \ref{final targ syst stab} below for an idea of the proof of this assertion.
As a result, the combination of Proposition \ref{prop transfo 2} with Proposition \ref{prop transfo 1} already shows that our initial system $(M,0,F,Q)$ is finite-time stable for some $F$, with settling time $T(0,G^2,0,Q^1)$.
However, this time $T(0,G^2,0,Q^1)$ is always strictly larger than the time $\Topt$ given in Theorem \ref{main thm} (as long as $m>1$).
In the case of time-independent systems, the time $T(0,G^2,0,Q^1)$ is the time obtained in \cite{HDMVK16, HVDMK19} and it has the more explicit expression
$$T(0,G^2,0,Q^1)=\sum_{i=1}^m \int_0^1 \frac{1}{-\lambda_i(\xi)} \, d\xi + \int_0^1 \frac{1}{\lambda_{m+1}(\xi)} \, d\xi.$$
\end{remark}

\subsubsection{Formal computations}

Let us now show how to establish Proposition \ref{prop transfo 2}.
As before, the goal is to show that, for every $F^2$, there exists $F^1$ such that we can transform a solution of $(M^1,0,F^1,Q^1)$ into a solution of $(0,G^2,F^2,Q^1)$.
Let then $w$ be the solution to the system $(M^1,0,F^1,Q^1)$ with state-feedback gain function $F^1$ to be determined below and initial data $w^0$.
Inspired by the works mentioned at the beginning of Section \ref{sect syst transfo}, we use a Volterra transformation of the second kind as follows:
\begin{equation}\label{transfo volt}
\gamma(t,x)=w(t,x)-\int_0^x K(t,x,\xi)w(t,\xi) \, d\xi,
\end{equation}
where we suppose for the moment that the kernel $K$ is smooth on $\clos{\Tau}$, where $\Tau$ is the infinite triangular prism defined by
$$\Tau=\ens{(t,x,\xi) \in (0,+\infty)\times(0,1)\times(0,1), \qquad \xi<x}.$$
Let us now perform some formal computations to see what $\gamma$ can solve.
We have
\begin{multline*}
\pt{\gamma}(t,x)+\Lambda(t,x) \px{\gamma}(t,x)=
\pt{w}(t,x)+\Lambda(t,x) \px{w}(t,x)
\\
-\int_0^x \pt{K}(t,x,\xi)w(t,\xi) \, d\xi
-\int_0^x K(t,x,\xi)\pt{w}(t,\xi) \, d\xi
\\
-\Lambda(t,x)K(t,x,x)w(t,x)-\Lambda(t,x)\int_0^x \px{K}(t,x,\xi)w(t,\xi) \, d\xi.
\end{multline*}
Using the equation satisfied by $w$, we obtain
\begin{multline*}
\pt{\gamma}(t,x)+\Lambda(t,x) \px{\gamma}(t,x)=
M^1(t,x)w(t,x)
\\
-\int_0^x \pt{K}(t,x,\xi)w(t,\xi) \, d\xi
-\int_0^x K(t,x,\xi)\left(-\Lambda(t,\xi)\pxi{w}(t,\xi)+M^1(t,\xi)w(t,\xi)\right) \, d\xi
\\
-\Lambda(t,x)K(t,x,x)w(t,x)-\Lambda(t,x)\int_0^x \px{K}(t,x,\xi)w(t,\xi) \, d\xi.
\end{multline*}
Integrating by parts the third term of the right hand side and using the boundary condition $w_+(t,0)=Q^1(t)w_-(t,0)$, we finally obtain

\begin{multline*}
\pt{\gamma}(t,x)+\Lambda(t,x) \px{\gamma}(t,x)
=\int_0^x \Bigg(-\pt{K}(t,x,\xi)
-\pxi{K}(t,x,\xi)\Lambda(t,\xi)
-K(t,x,\xi)\pxi{\Lambda}(t,\xi)
\\
-K(t,x,\xi)M^1(t,\xi)
-\Lambda(t,x)\px{K}(t,x,\xi)
\Bigg)w(t,\xi) \, d\xi
\\
+\left(M^1(t,x)+K(t,x,x)\Lambda(t,x)-\Lambda(t,x)K(t,x,x)\right)w(t,x)
-K(t,x,0)\Lambda(t,0)\begin{pmatrix} \Id_{\R^{m \times m}} \\ Q^1(t) \end{pmatrix} w_-(t,0).
\end{multline*}

\noindent
On the other hand, since $\gamma(t,0)=w(t,0)$, $\gamma$ satisfies the same boundary condition as $w$ at $x=0$:
$$
\gamma_+(t,0)-Q^1(t)\gamma_-(t,0)
=w_+(t,0)-Q^1(t)w_-(t,0)
=0.
$$
Finally, at $x=1$, we have
\begin{multline*}
\gamma_-(t,1)-\int_0^1 F^2(t,\xi) \gamma(t,\xi) \, d\xi
=
\\
\int_0^1 \left(F^1(t,\xi)-K_-(t,1,\xi)-F^2(t,\xi)+\int_\xi^1 F^2(t,\zeta) K(t,\zeta,\xi) \, d\zeta\right) w(t,\xi) \, d\xi,
\end{multline*}
where $K_-$ denotes the $m \times n$ sub-matrix of $K$ formed by its first $m$ rows.
Thus, we see that $\gamma$ satisfies at $x=1$ the boundary condition $\gamma_-(t,1)=\int_0^1 F^2(t,\xi) \gamma(t,\xi) \, d\xi$ if we take
\begin{equation}\label{def F1}
F^1(t,\xi)=K_-(t,1,\xi)+F^2(t,\xi)-\int_\xi^1 F^2(t,\zeta) K(t,\zeta,\xi) \, d\zeta.
\end{equation}
Note that $F^1$ belongs to $L^{\infty}((0,+\infty)\times(0,1))^{m \times n}$ provided that $F^2$ belongs to this space as well and
$$K \in L^\infty(\Tau)^{n \times n}.$$

In summary, $\gamma$ defined by \eqref{transfo volt} is the solution to $(0,G^2,F^2,Q^1)$ with initial data $\gamma^0(x)=w^0(x)-\int_0^x K(t^0,x,\xi)w^0(\xi) \,d\xi$ if we have the following two properties:

\begin{enumerate}[(i)]
\item\label{equ K volt}
For every $(t,x,\xi) \in \Tau$,
\begin{equation}\label{kern equ}
\left\{\begin{array}{l}
\ds \pt{K}(t,x,\xi)
+\Lambda(t,x)\px{K}(t,x,\xi)
+\pxi{K}(t,x,\xi)\Lambda(t,\xi)
\\
\ds \qquad \qquad \qquad \qquad \qquad \qquad \qquad \qquad \qquad \qquad +K(t,x,\xi)\left(\pxi{\Lambda}(t,\xi)+M^1(t,\xi)\right)=0,
\\
\ds K(t,x,x)\Lambda(t,x)-\Lambda(t,x)K(t,x,x)=-M^1(t,x).
\end{array}\right.
\end{equation}

\item
$G^2$ is defined by
$$
G^2=
\begin{pmatrix}
G^2_{--} & 0 \\
G^2_{+-} & 0
\end{pmatrix},
$$
with
\begin{equation}\label{def G2 2}
\left\{\begin{array}{l}
G^2_{--}(t,x)=-K_{--}(t,x,0)\Lambda_{--}(t,0)-K_{-+}(t,x,0)\Lambda_{++}(t,0)Q^1(t), \\
G^2_{+-}(t,x)=-K_{+-}(t,x,0)\Lambda_{--}(t,0)-K_{++}(t,x,0)\Lambda_{++}(t,0)Q^1(t).
\end{array}\right.
\end{equation}

\end{enumerate}

Finally, the stability property \eqref{stab 11} is clearly satisfied since, at every fixed $t \geq 0$, the Volterra transformation \eqref{transfo volt} defines an injective map of $L^2(0,1)^n$ (see e.g. \cite[Theorem 2.6]{Hor73}).

\subsubsection{The kernel equations}

We can prove that there exists $K \in C^0(\clos{\Tau})^{n \times n} \cap L^\infty(\Tau)^{n \times n}$ that satisfies the so-called ``kernel equations'' \eqref{kern equ} in the sense of broad solutions.
However, it is in general not enough to deduce stability results for the initial system $(M,0,F,Q)$ since the investigation of the stability properties of the system $(0,G^2,F^2,Q^1)$ is not an easier task without knowing any more information about it.
The breakthrough idea of the conference paper \cite{HDM15} in the time-independent case (see also \cite{HDMVK16, HVDMK19})  was to construct a solution $K$ to the kernel equations which, in addition, yields a simpler structure for the matrix $G^2_{--}$ defined in \eqref{def G2 2}.
This is the key point to prove stability results for the system $(0,G^2,F^2,Q^1)$ (see Remark \ref{rem S2 stable} and Section \ref{sect stabi S2} below).
Such a construction is possible by adding some conditions for $K_{--}$ at $(t,x,0)$ (see \eqref{def G2 2}) but the price to pay is that it introduces discontinuities for $K_{--}$, so that $K$ will not be globally $C^0$ anymore but only piecewise $C^0$ in general.
We will prove the following result:

\begin{theorem}\label{thm exist kern}
There exists a $n \times n$ matrix-valued function $K={(k_{ij})}_{1 \leq i,j \leq n}$ such that:
\begin{enumerate}[(i)]
\item
$K \in L^\infty(\Tau)^{n \times n}$.

\item\label{smooth part}
For every $i,j \in \ens{1,\ldots,n}$ with $j \not\in \ens{i+1,\ldots,m}$, we have $k_{ij} \in C^0(\clos{\Tau})$.

\item\label{reg K disc}
For every $i,j \in \ens{1,\ldots,m}$ with $i<j$, we have $k_{ij} \in C^0(\clos{\Tau_{ij}^-}) \cap C^0(\clos{\Tau_{ij}^+})$, where (see Figure \ref{figure2D})
\begin{align*}
& \Tau^-_{ij}=\ens{(t,x,\xi) \in \Tau, \quad \xi<\psi_{ij}(t,x)}, \\
& \Tau^+_{ij}=\ens{(t,x,\xi) \in \Tau, \quad \xi>\psi_{ij}(t,x)},
\end{align*}
where $\psi_{ij} \in C^1([0,+\infty)\times[0,1])$ satisfies the following semi-linear hyperbolic equation for every $t \geq 0$ and $x \in [0,1]$:
\begin{equation}\label{equ psi}
\left\{\begin{array}{l}
\ds \pt{\psi_{ij}}(t,x)+\lambda_i(t,x)\px{\psi_{ij}}(t,x)-\lambda_j(t,\psi_{ij}(t,x))=0, \\
\psi_{ij}(t,0)=0.
\end{array}\right.
\end{equation}

\item
$K$ is a broad solution of \eqref{kern equ} in $\Tau$ (the exact meaning of this statement will be detailed during the proof of the theorem, in Section \ref{sect exist kern T2} below).

\item\label{Gm triang}
For every $t \geq 0$ and $x \in [0,1]$, the matrix $G^2_{--}(t,x)$ defined in \eqref{def G2 2} is strictly lower triangular, i.e. it satisfies \eqref{Gamma cond zero} (it then follows from \ref{smooth part} that $G^2_{--} \in C^0([0,+\infty)\times[0,1])^{m \times m}$).
\end{enumerate}

\end{theorem}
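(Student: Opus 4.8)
The plan is to solve the kernel equations \eqref{kern equ} row by row. Since $\Lambda$ is diagonal, the $i$-th row $(k_{i1},\ldots,k_{in})$ of $K$ satisfies a closed, linearly coupled system of $n$ transport equations, the $(i,j)$ entry having principal part $\partial_t+\lambda_i(t,x)\partial_x+\lambda_j(t,\xi)\partial_\xi$ and the coupling occurring only through the zeroth-order term $\sum_\ell k_{i\ell}m^1_{\ell j}(t,\xi)$. The characteristic of the $(i,j)$ equation through $(t,x,\xi)\in\Tau$ is the curve $s\longmapsto(s,\chi_i(s;t,x),\chi_j(s;t,\xi))$, and, thanks to the bounds \eqref{bounds sin sout} on the entry and exit times, every such curve leaves $\clos{\Tau}$ through one of the three lateral faces $\ens{\xi=x}$, $\ens{\xi=0}$ or $\ens{x=1}$ within a ``time'' bounded by $2/\epsilon$, \emph{uniformly in} $t$ (after extending $M^1$ to negative times as in the proof of Proposition \ref{exist transfophi}, so that no datum at $t=t^0$ is ever needed). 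On the diagonal face the datum is prescribed by the second line of \eqref{kern equ}, i.e. $k_{ij}(t,x,x)=-m^1_{ij}(t,x)/(\lambda_i(t,x)-\lambda_j(t,x))$ for $i\neq j$; note that for $i=j$ this imposes nothing, precisely because $m^1_{ii}=0$ by \eqref{diag zero} -- this is the reason for the preliminary transformation of Section \ref{sect remov diag}. The trace of $K$ on $\ens{x=1}$ is not prescribed: it is an output, entering the feedback $F^1$ through \eqref{def F1}; likewise the traces at $\ens{\xi=0}$ of the entries not involved below are outputs, entering $G^2$ through \eqref{def G2 2}.

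First I would carry out the geometric analysis of these characteristics to see, for each entry, from which face it must read its data. For $i\geq m+1$, and for $i\leq m$ with $j\notin\ens{i+1,\ldots,m}$, one checks using \eqref{hyp speeds}--\eqref{hyp speeds bis} and the comparison inequalities \eqref{comparison ssout} that $k_{ij}$ is determined, via the Duhamel (broad-solution) formula along the characteristics, by the diagonal datum alone -- continuously extended across the subregions foliated by characteristics that meet only $\ens{x=1}$ or $\ens{\xi=0}$, where no competing datum is present -- so that $k_{ij}\in C^0(\clos{\Tau})$; this yields item \ref{smooth part}. For $i\leq m$ and $j\in\ens{i+1,\ldots,m}$, however, there \emph{are} two competing data: the forced diagonal value, and the value at $\ens{\xi=0}$ that we are going to impose in order to obtain the triangular structure. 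The characteristic surface $\ens{\xi=\psi_{ij}(t,x)}$ -- swept out by the characteristics issued from the corner edge $\ens{x=\xi=0}$, whence precisely \eqref{equ psi} and $\psi_{ij}(t,0)=0$ -- separates $\Tau$ into $\Tau^+_{ij}$ (adjacent to the diagonal, foliated by characteristics coming from $\ens{\xi=x}$) and $\Tau^-_{ij}$ (adjacent to $\ens{\xi=0}$, foliated by characteristics coming from $\ens{\xi=0}$). Well-posedness of \eqref{equ psi} and $\psi_{ij}\in C^1$ follow from the method of characteristics and the implicit function theorem; one must check with \eqref{hyp speeds}--\eqref{hyp speeds bis} that $0<\psi_{ij}(t,x)<x$ for $x\in(0,1]$ and all $t\geq 0$, so that $\Tau^{\pm}_{ij}$ are genuine nonempty subdomains -- here \eqref{hyp speeds bis} is used to keep the interface strictly between $\ens{\xi=0}$ and the diagonal.

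The key point is then to prescribe, on the face $\ens{\xi=0}$ of $\Tau^-_{ij}$ (for $i\leq j\leq m$), the datum forced by requiring $G^2_{--}$ in \eqref{def G2 2} to be strictly lower triangular: the condition $(G^2_{--})_{ij}(t,x)=0$ for $i\leq j$ is equivalent to
\[
k_{ij}(t,x,0)=-\frac{1}{\lambda_j(t,0)}\bigl(K_{-+}(t,x,0)\Lambda_{++}(t,0)Q^1(t)\bigr)_{ij},\qquad i\leq j\leq m,
\]
whose right-hand side involves only the trace $K_{-+}(\cdot,\cdot,0)$, already determined as an output in the previous step (for $i=j$ this is legitimate because, as noted, $k_{ii}$ carries no forced value on the diagonal). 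Since the data prescribed on the two sides of the interface are unrelated, the resulting $k_{ij}$ is continuous on each of $\clos{\Tau^+_{ij}}$ and $\clos{\Tau^-_{ij}}$ but generically jumps across $\ens{\xi=\psi_{ij}(t,x)}$; this gives item \ref{reg K disc} and, by construction, item \ref{Gm triang} (the continuity of $G^2_{--}$ following from item \ref{smooth part} applied to the entries $k_{ij}$, $i>j$).

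Finally I would establish existence by a fixed point. Writing each transport equation in its integral form along the characteristics, with the lateral datum dictated by the subregion as above, produces a closed integral system for the whole row $(k_{i1},\ldots,k_{in})$ -- genuinely coupled, since the $\ens{\xi=0}$ datum of the upper-triangular $k_{--}$ entries feeds on the $K_{-+}$ trace, which is itself part of the unknown row. Because every characteristic meets a lateral face within ``time'' $2/\epsilon$, the associated integral operator is a contraction on the Banach space of bounded functions that are continuous on each piece $\clos{\Tau^{\pm}_{ij}}$ -- equipped with a norm exponentially weighted in $t$, or equivalently after iterating over the (uniformly boundedly many) successive layers of the characteristic foliation -- which yields a unique bounded broad solution $K\in L^\infty(\Tau)^{n\times n}$ with the announced piecewise regularity, together with the $L^\infty$ bounds, uniform in $t$, for $K$ and hence for $G^2_{--}$, $G^2_{+-}$ through \eqref{def G2 2}. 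When $\Lambda$, $M$, $Q$ -- hence $M^1$, $Q^1$ after choosing $\tau$-periodic extensions to negative times -- are $\tau$-periodic in $t$, all the data and the characteristic foliation are $\tau$-periodic, so by uniqueness $K(t+\tau,x,\xi)=K(t,x,\xi)$ for every $(t,x,\xi)$, which will eventually make $F$ periodic. I expect the main obstacle, which is what is genuinely new with respect to the time-independent case, to be the geometric bookkeeping: the interfaces $\psi_{ij}$ are now curved and time-dependent, so one must verify carefully, and uniformly for all $t\geq 0$, that the characteristic families issued from the various faces really do foliate the expected subdomains of $\Tau$, and one must keep track of the finitely many successive crossings of these interfaces by a characteristic when writing the integral equations and proving their well-posedness.
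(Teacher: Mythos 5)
Your proposal reproduces the paper's construction faithfully at the structural level: row-by-row decoupling (fix $i\leq m$, since the rows $i>m$ are analogous and do not enter \eqref{def G2 2}); extension to negative times to avoid an initial layer; bicharacteristics $s\mapsto(s,\chi_i(s;t,x),\chi_j(s;t,\xi))$; the diagonal datum \eqref{equ remov diag} and the observation that $m^1_{ii}=0$ is what makes the case $i=j$ consistent; an artificial datum on $\{x=1\}$ for $j<i$ with $C^0$-compatibility on the corner edge; the $\{\xi=0\}$ datum $k_{ij}(t,x,0)=\sum_\ell k_{i,m+\ell}(t,x,0)\widetilde{q}^1_{\ell j}(t)$ forced by strict lower triangularity of $G^2_{--}$; the interface $\xi=\psi_{ij}(t,x)$ as the separatrix between the two foliations and the resulting piecewise continuity; and closing the coupled integral system by a fixed point. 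All of that agrees with Section \ref{sect exist kern T2}.

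The gap is in the contraction step, and it is not cosmetic. For a fixed row $i$, the bicharacteristics run \emph{backward in time} for $j<i$ (one integrates over $s\in(\sinterin_{ij},t)$) and \emph{forward in time} for $j\geq i$ (over $s\in(t,\sinterout_{ij})$), and the two families are coupled through the zeroth-order terms $\sum_\ell k_{i\ell}\widetilde m^1_{\ell j}$ and, for $\Phi_2$, through the nested reading of $k_{i,m+\ell}(\cdot,\cdot,0)$ at a \emph{later} time. A norm weighted by $e^{\pm Lt}$ makes one family contractive and blows up on the other, so ``a norm exponentially weighted in $t$'' does not produce a contraction. Likewise, ``iterating over the uniformly boundedly many successive layers of the characteristic foliation'' presupposes exactly what needs to be proved: that some quantity strictly decreases across each layer, uniformly in $t$, so that the number of layers reachable from any $(t,x,\xi)\in\clos{\Taub}$ is bounded. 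This is precisely the content of Lemma \ref{key lemma}: the existence of a bounded $\Omega_i\geq0$ which increases (at rate $\geq\epsilon_0$) along the $j<i$ characteristics and decreases (at rate $\leq-\epsilon_0$) along the $j\geq i$ characteristics, simultaneously. Even in the time-independent case this is not a simple weight in $t$, $x$, or $\xi$ alone, but the anisotropic combination $\Omega_i(x,\xi)=\phi_i(x)-\nu\,\phi_i(\xi)$ with $\phi_i(x)=\int_0^x\frac{d y}{-\lambda_i(y)}$ and a parameter $\nu\in(r,1)$ chosen strictly between $\max_{1\leq j<i}\sup\lambda_i/\lambda_j$ and $1$, which uses the gap assumption \eqref{hyp speeds bis}; in the time-dependent case one must solve the auxiliary transport problem \eqref{def omegai} for a one-parameter family $\omega_i^\nu$ and establish the uniform lower bounds \eqref{lbounds omegai}. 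This construction is the genuinely new technical ingredient of the theorem, and without it (or something equivalent) your fixed-point argument does not close: each of $\Phi_1,\Phi_2$ only has bounded norm, so the iterates $(\Phi_1+\Phi_2)^N$ do not a priori contract unless one introduces the $\Omega_i$-adapted change of variable $s\mapsto\Omega_i(\chi_{ij}(s;t,x,\xi))$ that yields the Volterra-type bound $C^N\Omega_i^N/N!$.

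Two minor points. First, the trace $K_{-+}(\cdot,\cdot,0)$ is not ``already determined as an output in the previous step'': it is an unknown of the same fixed-point problem, as you in fact acknowledge two sentences later, so the earlier phrasing should be dropped. Second, the well-posedness of \eqref{equ psi} is obtained in the paper not by solving that PDE directly but by defining $\psi_{ij}$ through the implicit function theorem applied to $\ssout_i(t,x)=\ssout_j(t,\xi)$, which makes the $C^1$ regularity and the inequality $0\leq\psi_{ij}(t,x)<x$ for $x>0$ immediate from \eqref{increases} and \eqref{comparison ssout}; your route is equivalent but requires a small extra argument to justify global existence and the range $[0,1]$.
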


The proof of Theorem \ref{thm exist kern} is one of the main technical difficulties of this article and it is postponed to Section \ref{sect exist kern T2} below for the sake of the presentation.
We conclude this section with some important remarks.

\begin{remark}\label{rem CN solv kern}
Let us rewrite the second condition of \eqref{kern equ} component-wise:
\begin{equation}\label{equ remov diag}
\left(\lambda_j(t,x)-\lambda_i(t,x)\right)k_{ij}(t,x,x)=-m^1_{ij}(t,x).
\end{equation}
Therefore, we see that for $i=j$ we shall necessarily have $m^1_{ii}=0$ and it explains why we had to perform a preliminary transformation in Section \ref{sect remov diag} to remove these terms (otherwise the equation \eqref{equ remov diag}, and thus the kernel equations \eqref{kern equ}, have no solution).
\end{remark}

\begin{remark}\label{rem G2+- too}
It is in general not possible to solve \eqref{kern equ} with $G^2_{--}=0$, unless $m=1$.
\end{remark}

\begin{remark}\label{rem broad reg int term}
Observe that, with the regularity stated in Theorem \ref{thm exist kern}, we have in particular that, for every $w \in C^0([t^0,+\infty);L^2(0,1)^n)$, $t^0 \geq 0$,
$$
(t,x) \mapsto \int_0^x K(t,x,\xi)w(t,\xi) \,d\xi
\in C^0([t^0,+\infty)\times[0,1])^n.
$$
This follows from Lebesgue's dominated convergence theorem.
This shows that $\gamma$ defined by \eqref{transfo volt} has the good regularity to be a broad solution (see Definition \ref{def broad sol}), if so has $w$.
\end{remark}

\begin{remark}\label{rem discont}
Observe that the condition \ref{reg K disc} shows that the kernel has possible discontinuities on $\xi=\psi_{ij}(t,x)$ for $i<j \leq m$.
Besides, these discontinuities also depend on the component of the kernel that we consider.
The appearance of such discontinuities is explained by the requirement of the last condition \ref{Gm triang} because we somehow force two boundary conditions at the points $(t,0,0)$, one by the condition already required in \eqref{kern equ} (which concerns $i \neq j$, see Remark \ref{rem CN solv kern}) and another one by \ref{Gm triang} (which only concerns $i \leq j \leq m$).
This results in discontinuities along the characteristics passing through these points.
Note that this also complicates the justification of formal computations that we performed above since regularity problems will occur during the computation of the following term (when $i<j \leq m$):
$$
\pt{}\left(\int_0^x k_{ij}(t,x,\xi)w_j(t,\xi)\, d\xi\right)
+\lambda_i(t,x)\px{}\left(\int_0^x k_{ij}(t,x,\xi)w_j(t,\xi) \, d\xi\right).
$$
More precisely, writing $\int_0^x=\int_0^{\psi_{ij}(t,x)}+\int_{\psi_{ij}(t,x)}^x$ and using integration by parts, we see that the following jump terms notably appear:
\begin{multline*}
\left(\pt{\psi_{ij}}(t,x)-\lambda_j(t,\psi_{ij}(t,x))+\lambda_i(t,x)\px{\psi_{ij}}(t,x)\right)
\\
\times \left(k_{ij}^-(t,x,\psi_{ij}(t,x))-k_{ij}^+(t,x,\psi_{ij}(t,x))\right)w_j(t,\psi_{ij}(t,x)),
\end{multline*}
where $k_{ij}^-$ (resp. $k_{ij}^+$) denotes the trace of the restriction of $k_{ij}$ to $\partial\Tau_{ij}^-$ (resp $\partial\Tau_{ij}^+$).
This is why it is crucial to precise that $\psi_{ij}$ solves the first equation in \eqref{equ psi} so that such undesired terms vanish in the end.
In the case of time-independent systems, we have in fact
$$\psi_{ij}(t,x)=\phi_j^{-1}\left(\phi_i(x)\right),$$
where we introduced $\phi_\ell(x)=\int_0^x \frac{1}{-\lambda_\ell(\xi)} \, d\xi$ for $\ell \in \ens{1,\ldots,m}$ ($\psi_{ij}$ is well defined because $i<j$).
This is the same function as in \cite[(A.1)]{HVDMK19}.
\end{remark}

\begin{center}
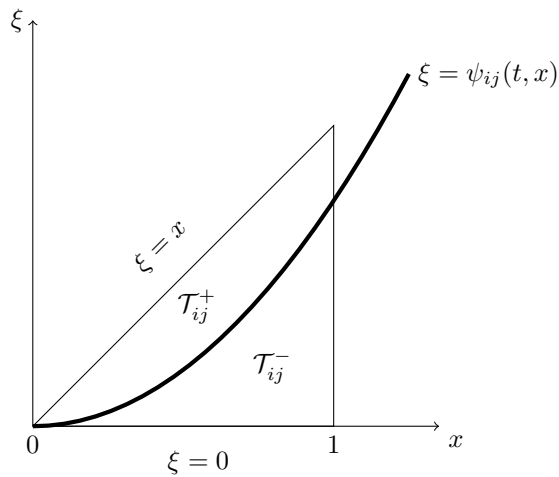

\scalebox{1}{
\begin{tikzpicture}[scale=4]
\draw [<->] (0,1.35) node [left]  {$\xi$} -- (0,0) -- (1.35,0) node [below right] {$x$};
\draw (0,0) node [left, below] {$0$};
\draw (1,0) node [below] {$1$};
\draw (0,0) -- (1,0) -- (1,1) -- (0,0);
\draw[ultra thick, domain=0:1.25] plot (\x,{3*\x^2/4});
\draw (1.25,1.17) node [right] {$\xi=\psi_{ij}(t,x)$};
\draw (0.45,0.4) node [right] {$\Tau_{ij}^+$};
\draw (0.7,0.2) node [right] {$\Tau_{ij}^-$};
\draw (0.55,-0.05) node [below] {$\xi=0$};
\draw (0.425,0.6) node[rotate=45] {$\xi=x$};
      \end{tikzpicture}
      }
\captionof{figure}{2D cross-section of the domain $\Tau$ at a fixed $t$}\label{figure2D}
\end{center}

\subsection{Fredholm integral transformation}\label{sect stabi S2}

We recall that at the moment we already know that the system $(0,G^2,F^2,Q^1)$ of Proposition \ref{prop transfo 2} is finite-time stable if we take $F^2=0$, but only with a settling time which is strictly larger than $\Topt$ (unless $m=1$), see Remark \ref{rem S2 stable}.
In this section, we perform a third and last transformation to remove the coupling term $G^2_{--}$ in the system $(0,G^2,F^2,Q^1)$ and we show that the resulting system has the desired stability properties.
More precisely, the goal of this section is to establish the two following results:

\begin{proposition}\label{prop transfo 3}

There exists $F^2 \in L^{\infty}((0,+\infty)\times(0,1))^{m \times n}$ such that the following property holds for every $T>0$:
\begin{multline}\label{stab 12}
(0,G^3,0,Q^1) \text{ is finite-time stable with settling time } T
\\
\Longrightarrow \quad (0,G^2,F^2,Q^1) \text{ is finite-time stable with settling time } T,
\end{multline}
where
\begin{equation}\label{def G3}
G^3=\begin{pmatrix} 0 & 0 \\ G^2_{+-} & 0 \end{pmatrix}.
\end{equation}
\end{proposition}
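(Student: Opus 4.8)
The plan is to mimic the scheme of the proofs of Propositions~\ref{prop transfo 1} and~\ref{prop transfo 2}: for a well-chosen $F^2$, I would construct a linear transformation which at each fixed time is an invertible bounded map of $L^2(0,1)^n$, with inverse bounded uniformly in $t \ge 0$, and which carries any broad solution of $(0,G^2,F^2,Q^1)$ to a broad solution of $(0,G^3,0,Q^1)$; then the implication \eqref{stab 12} follows just as in the earlier steps. Concretely, given a solution $\gamma$ of $(0,G^2,F^2,Q^1)$, I would set $z_+ := \gamma_+$ and
\[
z_-(t,x) := \gamma_-(t,x) - \int_0^1 L(t,x,\xi)\,\gamma(t,\xi)\,d\xi ,
\]
for a bounded, sufficiently regular $m\times n$ matrix-valued kernel $L = (\ell_{ij})$ on $(0,+\infty)\times(0,1)\times(0,1)$; the integral here runs over the whole interval $(0,1)$, so this is a genuine Fredholm transformation and not a Volterra one. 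Differentiating along the characteristics, using the equation of $(0,G^2,F^2,Q^1)$ together with $\gamma_+(t,0) = Q^1(t)\gamma_-(t,0)$, and integrating by parts in $\xi$ as in Section~\ref{reduc to triang}, one is led to impose that $L$ solve the transport-type kernel equation
\[
\pt{L} + \Lambda_{--}(t,x)\px{L} + \pxi{L}\,\Lambda(t,\xi) + L\,\pxi{\Lambda}(t,\xi) = 0 ,
\]
together with the vanishing conditions $L(t,0,\xi) = 0$ and $L(t,x,1) = 0$, and the design relation
\[
G^2_{--}(t,x) = L(t,x,0)\,\Lambda(t,0)\begin{pmatrix}\Id\\ Q^1(t)\end{pmatrix} + \left(\int_0^1 L(t,x,\xi)\,G^2(t,\xi)\,d\xi\right)\begin{pmatrix}\Id\\ Q^1(t)\end{pmatrix}
\]
coming from the coefficient of $\gamma_-(t,0)$. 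The condition $L(t,0,\xi) = 0$ forces $z(t,0) = \gamma(t,0)$, so the boundary condition at $x=0$ is preserved and the $+$-equation, whose right-hand side equals $G^2_{+-}\gamma_-(t,0) = G^2_{+-}z_-(t,0)$, is left unchanged; $L(t,x,1) = 0$ kills the boundary term created by the integration by parts; and the boundary condition at $x=1$ then forces the choice $F^2(t,\xi) := L(t,1,\xi)$, which belongs to $L^\infty((0,+\infty)\times(0,1))^{m\times n}$ as soon as $L$ does.

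The main obstacle is the existence of such a kernel $L$. The key point — and this is where the strict lower triangularity of $G^2_{--}$ granted by Proposition~\ref{prop transfo 2} is essential — is that $L$ can be sought with the triangular pattern $\ell_{ij} = 0$ for $j \ge i$ (so in particular $\ell_{ij} = 0$ whenever $j > m$), in which case the kernel equation, the vanishing conditions and the design relation decouple into a system that can be solved recursively on the row index $i = 1, \dots, m$. Each of these kernel equations is again a non-standard multidimensional hyperbolic problem — now with data prescribed on two faces of the cube — whose well-posedness I would establish in Section~\ref{sect exist kern}; this is the Fredholm counterpart of Theorem~\ref{thm exist kern}, and it is again the time dependence that makes it delicate.

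Granting the existence of a kernel $L$ with this triangular structure, the transformation $\gamma \mapsto z$ is, at every fixed $t \ge 0$, triangular with respect to the component index: $z_1 = \gamma_1$, $z_i = \gamma_i - \int_0^1 \sum_{j<i}\ell_{ij}(t,x,\xi)\gamma_j(t,\xi)\,d\xi$ for $2 \le i \le m$, and $z_k = \gamma_k$ for $k > m$. It is therefore invertible by back-substitution on $i$, its inverse being of the same triangular Fredholm type, and — thanks to the uniform $L^\infty$ bound on $L$ — the operator norms of the transformation and of its inverse are bounded uniformly in $t \ge 0$. This is the point at which the Fredholm transformation, which a priori carries no automatic invertibility (in contrast with the Volterra transformation of Section~\ref{reduc to triang}), is nevertheless made invertible: not through any property of a single integral operator, but through the triangular coupling inherited from the strict lower triangularity of $G^2_{--}$. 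Since moreover $F^2$ depends only on $L$, hence neither on $t^0$ nor on the initial data, the finite-time global attractor and uniform stability properties of $(0,G^3,0,Q^1)$ transfer to $(0,G^2,F^2,Q^1)$ with the same settling time $T$, which is precisely the implication \eqref{stab 12}.
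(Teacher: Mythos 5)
Your attempt goes in the \emph{opposite} direction from the paper's proof: you transform a solution $\gamma$ of $(0,G^2,F^2,Q^1)$ into a hoped-for solution $z$ of $(0,G^3,0,Q^1)$, whereas the paper starts from a solution $z$ of the target system and maps it to $\gamma$ via $\gamma = z - \int_0^1 H z\,d\xi$. The paper discusses your direction in Remark~\ref{rem ordre transfo}, and that remark points exactly at the step where your argument breaks down.

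The gap is the condition $L(t,x,1)=0$. For the nontrivial block $L_{--}$ the transport equation has characteristics $s\mapsto(s,\chi_i(s;t,x),\chi_j(s;t,\xi))$ with both speeds negative, so they leave $\Rect$ forward in $s$ through $x=0$ or $\xi=0$. The data there, namely $L_{--}(t,0,\xi)=0$ together with the design relation at $\xi=0$, already determine $L_{--}$ uniquely. The trace $L_{--}(t,x,1)$ is therefore a \emph{consequence}, not a free boundary condition — and it does not vanish: since $|\lambda_j|>|\lambda_i|$ for $j<i\le m$, for $x$ sufficiently close to $1$ the characteristic from $(t,x,1)$ exits through $\xi=0$, where the design relation prescribes a generically nonzero value, hence $L_{--}(t,x,1)\ne 0$. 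Imposing $L(t,x,1)=0$ over-determines the kernel, so the kernel you describe cannot exist.

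The reason you need that condition, and the correct way around it, is that unlike the target system (where $z_-(t,1)=0$), here $\gamma_-(t,1)=\int_0^1 F^2(t,\xi)\gamma(t,\xi)\,d\xi$ is not zero. The boundary term $L(t,x,1)\Lambda(t,1)\gamma(t,1)$ therefore does not vanish; substituting the feedback condition with $F^2(t,\xi)=L_-(t,1,\xi)$ turns it into an integral in $\xi$ that must be absorbed into the kernel equation, producing the nonlinear nonlocal term $-L_{--}(t,x,1)\Lambda_{--}(t,1)L_{--}(t,1,\xi)$ written out in Remark~\ref{rem ordre transfo}. This is precisely why the paper prefers the kernel $H$: since $z_-(t,1)=0$, the only surviving boundary term at $\xi=1$ involves $z_+(t,1)$, and it is killed by the structural zeros $H_{-+}=H_{++}=0$, with no extra condition on $H_{--}$ and no nonlinear term, so that \eqref{kern equ T3} is linear and explicitly solvable (Theorem~\ref{thm exist kern T3}). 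The remainder of your argument — the strictly lower triangular ansatz, its role in making a Fredholm map automatically invertible, and the transfer of finite-time stability through a uniformly bounded, time-independent-$F^2$ transformation — is in the right spirit. But as written, the kernel does not exist; either replace $L(t,x,1)=0$ by the nonlinear term and handle the resulting equation by a fixed-point argument as in Section~\ref{sect exist kern T2}, or, as the paper does, reverse the direction of the transformation.
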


\begin{proposition}\label{final targ syst stab}
The system $(0,G^3,0,Q^1)$ is finite-time stable with settling time $\Topt$ defined by \eqref{def Topt}.
\end{proposition}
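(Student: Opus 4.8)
The plan is to analyze directly the broad solution of the system $(0,G^3,0,Q^1)$, exploiting the very special structure of $G^3$ in \eqref{def G3}: the coupling term $G^3 y(t,0)$ only feeds into the $y_+$-equations (through $G^2_{+-}$), and there is no interior coupling at all in the $y_-$-block, which satisfies $\pt{y_-}+\Lambda_{--}\px{y_-}=0$ with the homogeneous boundary condition $y_-(t,1)=0$. The idea is to follow the characteristics and show that after time $\Topt$ all components vanish, and that the $L^2$-norm is controlled along the way.

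First I would treat the $y_-$-block. Since $y_-$ solves a decoupled transport system with speeds $\lambda_1,\dots,\lambda_m<0$ and with $y_-(t,1)=0$, the broad solution formula (Definition \ref{def broad sol}) gives, for each $i\in\ens{1,\ldots,m}$, that $y_i(t,x)=0$ as soon as the backward characteristic through $(t,x)$ has already exited the domain through $x=1$ after time $t^0$, i.e.\ as soon as $\ssin_i(t,x)\geq t^0$; by \eqref{comparison ssout} the slowest such component is $\lambda_m$, and a short computation using \eqref{invariance} shows $y_-(t,\cdot)=0$ for $t\geq \ssout_m(t^0,1)$. In particular $y_-(t,0)=0$ for $t\geq\ssout_m(t^0,1)$, so the source term $G^2_{+-}(t,x)y_-(t,0)$ in the $y_+$-equations vanishes for such $t$. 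Then I would treat the $y_+$-block, which for $t\geq\ssout_m(t^0,1)$ solves the decoupled homogeneous transport system $\pt{y_+}+\Lambda_{++}\px{y_+}=0$ (the boundary term $y_+(t,0)=Q^1(t)y_-(t,0)=0$ is also zero) with speeds $\lambda_{m+1},\dots,\lambda_n>0$; again by the broad solution formula, $y_j(t,x)=0$ once the backward characteristic through $(t,x)$ has exited through $x=0$ after time $\ssout_m(t^0,1)$, i.e.\ once $\ssin_j(t,x)\geq\ssout_m(t^0,1)$. Using \eqref{comparison ssout} (the slowest positive speed is $\lambda_{m+1}$) and \eqref{invariance}, this forces $y_+(t,\cdot)=0$ for $t\geq\ssout_{m+1}(\ssout_m(t^0,1),0)$. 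Taking the supremum over $t^0\geq 0$ and recalling \eqref{def Topt}, we get $y(t^0+\Topt,\cdot)=0$ for every $t^0\geq 0$ and $y^0\in L^2(0,1)^n$, which is property \ref{def stabi}.

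For the uniform stability property \ref{def stabl}, I would use the remark already flagged in the paper (Remark \ref{rem FTGA implies US}): since the state-feedback gain function here is $F=0\in L^\infty$ and the finite-time global attractor property holds, uniform stability follows automatically. Alternatively, and more self-containedly, one gets an explicit bound: writing $y$ in terms of $y^0$ along the characteristics, the change of variables formulas \eqref{deriv chi} (so that the Jacobians $\px{\chi_i}$ are bounded above and below on the relevant time intervals, uniformly in $t^0$ because of \eqref{hyp speeds}, \eqref{hyp speeds bis} and \eqref{reg parameters}) together with the boundedness of $G^2_{+-}$ and $Q^1$ give an estimate $\norm{y(t,\cdot)}_{L^2(0,1)^n}\leq C\norm{y^0}_{L^2(0,1)^n}$ for all $t\in[t^0,t^0+\Topt]$ with $C$ independent of $t^0$, and then $y(t,\cdot)=0$ for $t\geq t^0+\Topt$; choosing $\delta=\varepsilon/C$ concludes.

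The only mildly delicate point — the "main obstacle," though it is really a bookkeeping issue rather than a conceptual one — is making the characteristic tracing rigorous at the level of broad solutions when the source term $G^2_{+-}y_-(t,0)$ is present on the initial time interval $[t^0,\ssout_m(t^0,1)]$: one must check that on this interval the Duhamel-type representation of the broad solution is well defined and produces an $L^2$ function continuous in $t$, which is exactly the content of the well-posedness statement recalled after \eqref{feedback law} and in Appendix \ref{sect broad sol}, applied with $F=0$. I would also need to be slightly careful that the supremum defining $\Topt$ is attained in the sense that $y(t^0+\Topt,\cdot)=0$ and not merely $y(t,\cdot)\to 0$; this is fine because for each fixed $t^0$ the vanishing happens at the finite time $\ssout_{m+1}(\ssout_m(t^0,1),0)\leq t^0+\Topt$, and the solution stays zero afterwards since zero is a (broad) solution of the homogeneous system and broad solutions are unique.
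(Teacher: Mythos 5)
Your argument is correct and follows essentially the same route as the paper's: both exploit the structure of $G^3$ through the broad-solution representation along characteristics, the inverse formula \eqref{inv sin} between entry and exit times, and the monotonicity/comparison properties \eqref{increases} and \eqref{comparison ssout}, with uniform stability delegated to Remark \ref{rem FTGA implies US}. The only difference is presentational: you propagate forward in two stages (first $y_-$ vanishes at $\ssout_m(t^0,1)$, then $y_+$ at $\ssout_{m+1}(\ssout_m(t^0,1),0)$), whereas the paper verifies the equivalent backward conditions \eqref{m first compo zero}--\eqref{p last compo zero} at the single time $t^0+\Topt$.
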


Note that the proof of our main result -- Theorem \ref{main thm} -- will then be complete   (recall Propositions \ref{prop transfo 1} and \ref{prop transfo 2}), except for the $\tau$-periodicity statement which will be studied later on in Section~\ref{sec.per}.

\subsubsection{Finite-time stability of the system $(0,G^3,0,Q^1)$}
In this section we prove Proposition \ref{final targ syst stab} in four steps.

\begin{proof}[\nopunct]
\begin{enumerate}[1)]
\item
Let $t^0 \geq 0$ be fixed.
From the very definition of broad solution (see Definition \ref{def broad sol}) and the simple structure of $G^3$, we see that the first $m$ components of the system vanish at time $t^0+\Topt$ if (recall that the feedback is equal to zero)
\begin{equation}\label{m first compo zero}
\ssin_i(t^0+\Topt,x)>t^0, \quad \forall x \in [0,1], \quad \forall i \in \ens{1,\ldots,m},
\end{equation}
and the remaining $p$ components of the system vanish at time $t^0+\Topt$ if
\begin{equation}\label{p last compo zero}
\left\{\begin{array}{l}
\ssin_i(t^0+\Topt,x)>t^0, \quad \forall x \in [0,1], \quad \forall i \in \ens{m+1,\ldots,n}
\\
\ssin_j\left(\ssin_i(t^0+\Topt,x),0\right)>t^0, \quad \forall x \in [0,1), \quad \forall j \in \ens{1,\ldots,m},
\quad \forall i \in \ens{m+1,\ldots,n}.
\end{array}\right.
\end{equation}

\item
First of all, observe that, from \eqref{increases}, \eqref{invariance} and \eqref{sin is zero} we have the following inverse formula for every $t,\bar{t} \in \R$:
\begin{equation}\label{inv sin}
\left\{\begin{array}{l}
\ssin_i(t,0)>\bar{t} \quad \Longleftrightarrow \quad t>\ssout_i(\bar{t},1), \qquad \mbox{ if } i \in \ens{1,\ldots,m}, \\
\ssin_i(t,1) \geq \bar{t} \quad \Longleftrightarrow \quad t \geq \ssout_i(\bar{t},0), \qquad  \mbox{ if } i \in \ens{m+1,\ldots,n}.
\end{array}\right.
\end{equation}

\item
Let us establish \eqref{m first compo zero}.
Let then $i \in \ens{1,\ldots,m}$ be fixed.
We have:
$$
\begin{array}{rcl}
\ds \ssin_i(t^0+\Topt,x)>t^0, \quad \forall x \in [0,1] &\ds \Longleftrightarrow &\ds \ssin_i(t^0+\Topt,0)>t^0, \quad \mbox{ (by \eqref{increases}),} \\
    &\ds \Longleftrightarrow &\ds t^0+\Topt>\ssout_i\left(t^0,1\right), \quad \mbox{ (by \eqref{inv sin}),}
\end{array}
$$
and this last statement holds true since, by definition of $\Topt$ and \eqref{tentredeux}-\eqref{sin is zero}, we have, for an arbitrary $j \in \ens{m+1,\ldots,n}$,
$$t^0+\Topt \geq \ssout_j(\ssout_i(t^0,1),0)>\ssout_i(t^0,1).$$

\item
Let us now establish \eqref{p last compo zero}.
We focus on the second inequality since the first one is obtained similarly to \eqref{m first compo zero}.
Let then $i \in \ens{m+1,\ldots,n}$ and $j \in \ens{1,\ldots,m}$ be fixed.
We have:
\begin{multline*}
\ssin_j\left(\ssin_i(t^0+\Topt,x),0\right)>t^0, \quad \forall x \in [0,1)
\\
\begin{array}{cl}
\ds \Longleftrightarrow &\ds \ssin_i(t^0+\Topt,x)>\ssout_j\left(t^0,1\right), \quad \forall x \in [0,1), \quad \mbox{ (by \eqref{inv sin}),}
\\
\ds \Longleftrightarrow &\ds \ssin_i(t^0+\Topt,1) \geq \ssout_j\left(t^0,1\right), \quad \mbox{ (by \eqref{increases}),}
\\
\ds \Longleftrightarrow &\ds t^0+\Topt \geq \ssout_i\left(\ssout_j\left(t^0,1\right),0\right), \quad \mbox{ (by \eqref{inv sin}),}
\end{array}
\end{multline*}
and this last statement holds true by definition of $\Topt$.

\end{enumerate}

\end{proof}

\subsubsection{Proof of Proposition~\ref{prop transfo 3}}\label{sect form compu}
\begin{proof}[\nopunct]
We start the proof with some computations.
We will show that we can transform a solution of $(0,G^3,0,Q^1)$ into a solution of $(0,G^2,F^2,Q^1)$ (note the difference in the order of the transformation with respect to the previous sections and see Remark \ref{rem ordre transfo} below for the reason).
Let then $z$ be the solution to the system $(0,G^3,0,Q^1)$ with initial data $z^0$.
Inspired by the work \cite{CHO17} mentioned before (for time-independent systems), we propose to use a Fredholm integral transformation as follows:
\begin{equation}\label{transfo fred}
\gamma(t,x)=z(t,x)-\int_0^1 H(t,x,\xi)z(t,\xi) \, d\xi,
\end{equation}
where we suppose for the moment that the kernel $H$ is smooth on $\clos{\Rect}$, where $\Rect$ is the infinite rectangular prism defined by
$$\Rect=(0,+\infty)\times(0,1)\times(0,1).$$
Let us now perform some formal computations and see what $\gamma$ can solve.
We have
\begin{multline*}
\pt{\gamma}(t,x)+\Lambda(t,x) \px{\gamma}(t,x)=
\pt{z}(t,x)+\Lambda(t,x) \px{z}(t,x)
\\
-\int_0^1 \pt{H}(t,x,\xi)z(t,\xi) \, d\xi
-\int_0^1 H(t,x,\xi)\pt{z}(t,\xi) \, d\xi
-\Lambda(t,x)\int_0^1 \px{H}(t,x,\xi)z(t,\xi) \, d\xi.
\end{multline*}
Using the equation satisfied by $z$, we obtain
\begin{multline*}
\pt{\gamma}(t,x)+\Lambda(t,x) \px{\gamma}(t,x)=
G^3(t,x)z(t,0)
-\int_0^1 \pt{H}(t,x,\xi)z(t,\xi) \, d\xi
\\
-\int_0^1 H(t,x,\xi)\left(-\Lambda(t,\xi)\pxi{z}(t,\xi)+G^3(t,\xi)z(t,0)\right) \, d\xi
-\Lambda(t,x)\int_0^1 \px{H}(t,x,\xi)z(t,\xi) \, d\xi.
\end{multline*}
Integrating by parts the third term of the right hand side, we obtain

\begin{multline*}
\pt{\gamma}(t,x)+\Lambda(t,x) \px{\gamma}(t,x)
=\int_0^1 \Bigg(-\pt{H}(t,x,\xi)
-\pxi{H}(t,x,\xi)\Lambda(t,\xi)
-H(t,x,\xi)\pxi{\Lambda}(t,\xi)
\\
-\Lambda(t,x)\px{H}(t,x,\xi)
\Bigg)z(t,\xi) \, d\xi
\\
+H(t,x,1)\Lambda(t,1)z(t,1)
+\left(G^3(t,x)-H(t,x,0)\Lambda(t,0)
-\int_0^1 H(t,x,\xi)G^3(t,\xi) \, d\xi\right)z(t,0).
\end{multline*}
Using the formula \eqref{transfo fred} with $x=0$ we finally obtain
\begin{multline*}
\pt{\gamma}(t,x)+\Lambda(t,x) \px{\gamma}(t,x)
=
\int_0^1 \Bigg(-\pt{H}(t,x,\xi)
-\pxi{H}(t,x,\xi)\Lambda(t,\xi)
-H(t,x,\xi)\pxi{\Lambda}(t,\xi)
\\
-\Lambda(t,x)\px{H}(t,x,\xi)
+\left(G^3(t,x)-H(t,x,0)\Lambda(t,0)
-\int_0^1 H(t,x,\zeta)G^3(t,\zeta) \, d\zeta\right) H(t,0,\xi)
\Bigg)z(t,\xi) \, d\xi
\\
+H(t,x,1)\Lambda(t,1)z(t,1)
+\left(G^3(t,x)-H(t,x,0)\Lambda(t,0)
-\int_0^1 H(t,x,\xi)G^3(t,\xi) \, d\xi\right)\gamma(t,0).
\end{multline*}
Since
\begin{equation}\label{no feedback}
z_-(t,1)=0,
\end{equation}
the boundary term $H(t,x,1)\Lambda(t,1)z(t,1)$ vanishes if we require that $H$ satisfies
$$H_{-+}(t,x,1)=H_{++}(t,x,1)=0.$$
On the other hand, $\gamma$ and $z$ satisfy the same boundary condition at $x=0$ provided that
$$H(t,0,\xi)=0.$$
Finally, at $x=1$, we have (recall \eqref{no feedback})
$$
\gamma_-(t,1)-\int_0^1 F^2(t,\xi) \gamma(t,\xi) \, d\xi
=
\int_0^1 \left(-H_-(t,1,\xi)-F^2(t,\xi)+\int_0^1 F^2(t,\zeta) H(t,\zeta,\xi) \, d\zeta\right) z(t,\xi) \, d\xi,
$$
where $H_-$ denotes again the $m \times n$ sub-matrix of $H$ formed by its first $m$ rows.
Thus,  we see that $\gamma$ satisfies at $x=1$ the boundary condition $\gamma_-(t,1)=\int_0^1 F^2(t,\xi) \gamma(t,\xi) \, d\xi$ if $F^2(t,\cdot)$ satisfies the following Fredholm integral equation (at $t$ fixed):
\begin{equation}\label{def F2}
F^2(t,\xi)-\int_0^1 F^2(t,\zeta) H(t,\zeta,\xi) \, d\zeta=-H_-(t,1,\xi).
\end{equation}

In summary, $\gamma$ defined by \eqref{transfo fred} is the solution of $(0,G^2,F^2,Q^1)$ with state-feedback gain function $F^2$ satisfying \eqref{def F2} (whenever it exists) and initial data $\gamma^0(x)=z^0(x)-\int_0^1 H(t^0,x,\xi)z^0(\xi) \,d\xi$ if we have the following two properties:

\begin{enumerate}[(i)]
\item\label{equ K fred}
For every $(t,x,\xi) \in \Rect$,
\begin{equation}\label{kern equ T30}
\left\{\begin{array}{l}
\ds \pt{H}(t,x,\xi)
+\Lambda(t,x)\px{H}(t,x,\xi)
+\pxi{H}(t,x,\xi)\Lambda(t,\xi)
+H(t,x,\xi)\pxi{\Lambda}(t,\xi)
=0, \\
\ds H_{-+}(t,x,1)=H_{++}(t,x,1)=H(t,0,\xi)=0.
\end{array}\right.
\end{equation}

\item
$G^3$ satisfies the Fredholm integral equation
\begin{equation}\label{fred equ G3}
G^3(t,x)-\int_0^1 H(t,x,\xi)G^3(t,\xi) \, d\xi=
G^2(t,x)
+H(t,x,0)\Lambda(t,0).
\end{equation}

\end{enumerate}
 Finally, the stability property \eqref{stab 12} is clearly satisfied if, for every $t \geq 0$, the Fredholm transformation \eqref{transfo fred} defines a surjective map of  $L^2(0,1)^n$.

It remains to prove the existence of  $F^2$ and $H$ satisfying the above properties and so that the Fredholm transformation \eqref{transfo fred} is invertible (let us recall that, unlike Volterra transformations of the second kind, Fredholm transformations are not always invertible).
Note that $H=0$ is a solution of \eqref{kern equ T30}.
Taking into account the very particular structure \eqref{syst S2} of $G^2$, this motivates our attempt to look for a kernel $H$ with the following simple structure:
\begin{equation}\label{def ker H}
H=\begin{pmatrix} H_{--} & 0 \\ 0 & 0 \end{pmatrix}.
\end{equation}

\noindent
This structure implies that the Fredholm equation \eqref{fred equ G3} is equivalent to
$$
\left\{\begin{array}{l}
\ds G^3_{--}(t,x)-\int_0^1 H_{--}(t,x,\xi)G^3_{--}(t,\xi) \, d\xi=G^2_{--}(t,x)+H_{--}(t,x,0)\Lambda_{--}(t,0), \\
\ds G^3_{-+}(t,x)-\int_0^1 H_{--}(t,x,\xi)G^3_{-+}(t,\xi) \, d\xi=0, \\
\ds G^3_{+-}(t,x)=G^2_{+-}(t,x), \\
\ds G^3_{++}(t,x)=0. \\
\end{array}\right.
$$
These equations are easily solved by taking $G^3_{--}=G^3_{-+}=0$ (so that $G^3$ is indeed given by \eqref{def G3}) if we impose the following condition for $H_{--}$ at $(t,x,0)$:
$$H_{--}(t,x,0)=-G^2_{--}(t,x)\Lambda_{--}(t,0)^{-1}.$$

\noindent
We point out that this last condition may introduce discontinuities in the kernel, because of possible compatibility conditions at $(t,0,0)$ with the previous requirement that $H_{--}(t,0,\xi)=0$.

Finally, since $G^2_{--}$ is in fact strictly lower triangular \eqref{Gamma cond zero}, we also look for $H_{--}$ with the same structure.
Note that this structure in particular ensures that the Fredholm transformation \eqref{transfo fred} is invertible and that the Fredholm equation \eqref{def F2} always has a unique solution $F^2 \in L^{\infty}((0,+\infty)\times(0,1))^{m \times n}$, provided that $H_{--} \in L^{\infty}(\Rect)^{m \times m}$ (see for instance \cite[Appendix]{CHO17} for more details).
This property was a priori not guaranteed without additional information (we emphasize again that Fredholm transformations are not always invertible).

The final step is to prove the existence of  $H_{--}$ that satisfies all the properties mentioned above.
This is the goal of the following theorem, the proof of which is given in Section \ref{sect exist kern T3} below.

\begin{theorem}\label{thm exist kern T3}
There exists a $m \times m$ matrix-valued function $H_{--}={(h_{ij})}_{1 \leq i,j \leq m}$ such that:
\begin{enumerate}[(i)]
\item
$H_{--} \in L^{\infty}(\Rect)^{m \times m}$.

\item
For $i \leq j$, we have $h_{ij}=0$ (i.e. $H_{--}$ is strictly lower triangular).

\item
For $i>j$, we have $h_{ij} \in C^0(\clos{\Rect_{ij}^-}) \cap C^0(\clos{\Rect_{ij}^+})$, where
\begin{align*}
& \Rect^-_{ij}=\ens{(t,x,\xi) \in \Rect, \quad \xi<\psi_{ij}(t,x)}, \\
& \Rect^+_{ij}=\ens{(t,x,\xi) \in \Rect, \quad \xi>\psi_{ij}(t,x)},
\end{align*}
where $\psi_{ij} \in C^1([0,+\infty)\times[0,1])$ satisfies the semi-linear hyperbolic equation \eqref{equ psi}.

\item
$H_{--}$ is the unique broad solution in $\Rect$ of the system
\begin{equation}\label{kern equ T3}
\left\{\begin{array}{l}
\ds \pt{H_{--}}(t,x,\xi)+\Lambda_{--}(t,x)\px{H_{--}}(t,x,\xi)+\pxi{H_{--}}(t,x,\xi)\Lambda_{--}(t,\xi)
\\
\ds \qquad \qquad \qquad \qquad \qquad \qquad \qquad \qquad \qquad \qquad \qquad \qquad
+H_{--}(t,x,\xi)\pxi{\Lambda_{--}}(t,\xi)=0, \\
\ds H_{--}(t,0,\xi)=0, \\
\ds H_{--}(t,x,0)=-G^2_{--}(t,x)\Lambda_{--}(t,0)^{-1},
\end{array}\right.
\end{equation}
(once again, the exact meaning of this statement will be detailed during the proof of the theorem, in Section \ref{sect exist kern T3} below).

\end{enumerate}

\end{theorem}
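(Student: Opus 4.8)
The plan is to construct $H_{--}$ by solving the hyperbolic system \eqref{kern equ T3} component by component along characteristics, exploiting the strictly-lower-triangular ansatz to turn the coupled system into a cascade. Writing $H_{--} = (h_{ij})_{1 \le i,j \le m}$, for $i \le j$ we simply set $h_{ij} = 0$; one checks directly that this is consistent with all three equations in \eqref{kern equ T3} (the boundary data $-G^2_{--}(t,x)\Lambda_{--}(t,0)^{-1}$ vanishes above the diagonal thanks to \eqref{Gamma cond zero} and the fact that $\Lambda_{--}$ is diagonal, hence so is its inverse). For $i > j$, the PDE for $h_{ij}$ reads, in view of $\Lambda_{--} = \diag(\lambda_1,\ldots,\lambda_m)$,
\begin{equation*}
\pt{h_{ij}}(t,x,\xi) + \lambda_i(t,x) \px{h_{ij}}(t,x,\xi) + \lambda_j(t,\xi) \pxi{h_{ij}}(t,x,\xi) + \pxi{\lambda_j}(t,\xi) h_{ij}(t,x,\xi) = \text{(lower-order terms)},
\end{equation*}
where the right-hand side only involves $h_{kl}$ with $(k,l)$ strictly preceding $(i,j)$ in a suitable ordering (e.g. by $i - j$, or lexicographically), so the system is genuinely a cascade and each $h_{ij}$ is obtained from already-known quantities.

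The key step is to integrate this transport equation along the characteristic curves of the vector field $(1, \lambda_i(t,x), \lambda_j(t,\xi))$ in the $(t,x,\xi)$-space. These characteristics are built from the flows $\chi_i(\cdot; t, x)$ and $\chi_j(\cdot; t, \xi)$ introduced in Section \ref{subsection:char}. Each characteristic through an interior point of $\Rect$ is traced backwards in $t$ until either $x$ reaches $0$ (where we impose $h_{ij}(t,0,\xi) = 0$) or $\xi$ reaches $0$ (where we impose $h_{ij}(t,x,0) = -g^2_{ij}(t,x)\lambda_j(t,0)^{-1}$, using that $\Lambda_{--}(t,0)^{-1} = \diag(\lambda_1(t,0)^{-1},\ldots,\lambda_m(t,0)^{-1})$). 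The surface separating these two cases is exactly the set $\xi = \psi_{ij}(t,x)$: indeed, along a characteristic starting from a point on the plane $\{\xi = 0, x \text{ arbitrary}\}$, the value of $\xi$ as a function of the remaining coordinates is governed by the same ODE as \eqref{equ psi}, namely $\frac{d\xi}{dt}$-type relation yielding $\pt{\psi_{ij}} + \lambda_i \px{\psi_{ij}} - \lambda_j(\cdot,\psi_{ij}) = 0$ with $\psi_{ij}(t,0) = 0$. Since $i < j$ and the speeds satisfy \eqref{hyp speeds}, \eqref{hyp speeds bis}, one has $|\lambda_i| > |\lambda_j|$ when both are negative, which guarantees (via a comparison argument for ODEs, as in \eqref{comparison ssout}) that $0 < \psi_{ij}(t,x) < x$, so the surface genuinely sits inside $\Rect$ and separates it into $\Rect_{ij}^-$ and $\Rect_{ij}^+$. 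The two different boundary data on the two sides produce, in general, a jump of $h_{ij}$ across this surface, explaining the piecewise—rather than global—continuity in item (iii). The $L^\infty$ bound in item (i) follows from the Duhamel/Grönwall estimate along characteristics, using the uniform bounds \eqref{reg parameters} on the coefficients together with the already-established $L^\infty$ bounds on $G^2_{--}$ (Theorem \ref{thm exist kern}\ref{Gm triang}) and on the lower-order terms in the cascade; the finite travel time \eqref{bounds sin sout} keeps the exponential factors bounded. Regularity on each piece $\clos{\Rect_{ij}^\pm}$ comes from the $C^1$ regularity \eqref{reg xi} of the flows, the $C^1$ regularity of $\psi_{ij}$, and standard dependence-on-data results for the transport ODEs.

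The uniqueness claim, and the precise sense in which $H_{--}$ is a broad solution of \eqref{kern equ T3}, is handled exactly as in the general broad-solution framework of Appendix \ref{sect broad sol}: one recasts \eqref{kern equ T3} as a fixed-point equation for the integral-along-characteristics operator on the space $L^\infty(\Rect)^{m \times m}$ (respecting the cascade order so no genuine coupling remains), and a contraction/Grönwall argument on finite time-slabs of width $< 1/\epsilon$ gives existence and uniqueness, which then propagates to all of $\Rect$. I expect the main obstacle to be the careful bookkeeping of the characteristic geometry near the junction set $\{x = 0\} \cup \{\xi = 0\}$, in particular verifying that the separating surface $\xi = \psi_{ij}(t,x)$ is precisely the locus where the backward characteristic switches from exiting through $\{x=0\}$ to exiting through $\{\xi=0\}$, and checking that along that surface the transport equation itself does not force any further compatibility (so that an honest jump is admissible) — this is where the specific form of the first equation in \eqref{equ psi} is essential, mirroring the jump-term computation in Remark \ref{rem discont}. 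The passage from the smooth formal computation of Section \ref{sect form compu} to the broad-solution statement, accounting for these discontinuities, is the delicate technical point, but it parallels the treatment already needed for Theorem \ref{thm exist kern}.
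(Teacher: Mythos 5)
Your characteristic geometry is essentially right and matches the paper's proof: solve along $(s,\chi_i(s;t,x),\chi_j(s;t,\xi))$, recognize the two possible exit faces $x=0$ and $\xi=0$, identify the switching locus as $\xi=\psi_{ij}(t,x)$ with $\psi_{ij}$ satisfying \eqref{equ psi}, and conclude piecewise continuity on $\Rect^\pm_{ij}$. However, there is a genuine conceptual error in your handling of the coupling. You claim the component equations have ``lower-order terms'' linking $h_{ij}$ to other entries $h_{kl}$, creating a cascade that requires a fixed-point/Gr\"onwall argument. This is false: since $\Lambda_{--}$ is \emph{diagonal}, the matrix equation \eqref{kern equ T3} splits into $m^2$ completely uncoupled scalar transport equations \eqref{kern T3 cpw} with zero right-hand side. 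Each $h_{ij}$ decouples from all others, and the ODE \eqref{ODE kerh} along a characteristic is a linear first-order scalar ODE for $h_{ij}$ alone, solved in closed form by \eqref{kij Fred} -- no iteration, no contraction, no time-slabs. This is precisely the reason the paper treats this kernel first and flags that ``no fixed-point argument is needed,'' in contrast to the Volterra kernel of Theorem \ref{thm exist kern}, where the zero-order term $K\,\widetilde{M}^1$ (right-multiplication by a full matrix) genuinely couples the entries of a given row. Two minor slips also appear: since $\lambda_i,\lambda_j<0$, the characteristic reaches the faces $x=0$ or $\xi=0$ by moving \emph{forward} in $t$, not backwards as you wrote; and for the Fredholm kernel the nontrivial components satisfy $i>j$ (not $i<j$), so the inequality $\psi_{ij}(t,x)<x$ is neither true nor relevant here -- on the rectangular domain $\Rect$ one only needs $\psi_{ij}\in[0,1]$, which the paper obtains from the implicit function theorem applied to $\ssout_i(t,x)=\ssout_j(t,\xi)$.
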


This concludes the proof of Proposition~\ref{prop transfo 3}.

\end{proof}

\begin{remark}
Observe once again that the kernel is discontinuous.
This introduces some additional boundary terms along these discontinuities in the formal computations performed above but, as mentioned before in Remark \ref{rem discont}, these terms cancel each other out thanks to the equation satisfied by $\psi_{ij}$ in \eqref{equ psi}.
As in the time-independent case (\cite[Section 3]{CHO17}), the system \eqref{kern equ T3} is easy to solve and its solution is even explicit (see \eqref{kij Fred}-\eqref{explicit bij} below).
\end{remark}

\begin{remark}\label{rem ordre transfo}
If we prefer to use the inverse transformation
\begin{equation}\label{transfo fred inv}
z(t,x)=\gamma(t,x)-\int_0^1 L(t,x,\xi)\gamma(t,\xi) \, d\xi,
\end{equation}
where $L$ has the same structure as $H$ (i.e. only $L_{--}$ is not zero), then the corresponding kernel equations are
$$
\left\{\begin{array}{l}
\ds \pt{L_{--}}(t,x,\xi)+\Lambda_{--}(t,x)\px{L_{--}}(t,x,\xi)+\pxi{L_{--}}(t,x,\xi)\Lambda_{--}(t,\xi)
\\
\ds \qquad \qquad \qquad \qquad \qquad
+L_{--}(t,x,\xi)\pxi{\Lambda_{--}}(t,\xi)
-L_{--}(t,x,1)\Lambda_{--}(t,1)L_{--}(t,1,\xi)=0, \\
\ds L_{--}(t,0,\xi)=0, \\
\ds L_{--}(t,x,0)=\left(G^2_{--}(t,x)-\int_0^1L_{--}(t,x,\xi)G^2_{--}(t,\xi)d\xi\right)\Lambda_{--}(t,0)^{-1}.
\end{array}\right.
$$

We see that these equations are slightly more complicated than \eqref{kern equ T3} since there is a nonlinear and nonlocal term.
This explains why we had a preference for the transformation \eqref{transfo fred} over \eqref{transfo fred inv} but there is no obstruction to work with \eqref{transfo fred inv}.
\end{remark}

\section{Existence of a solution to the kernel equations}\label{sect exist kern}

In this section we prove Theorem \ref{thm exist kern} and Theorem \ref{thm exist kern T3}, which are the two key results for the present article, and we describe in Section \ref{sec.per} how to obtain a time-periodic feedback.
We propose to start with the proof of Theorem \ref{thm exist kern T3} because it is far more simpler (in particular, no fixed-point argument is needed).

\subsection{Kernel for the Fredholm transformation}\label{sect exist kern T3}
In this section we prove Theorem \ref{thm exist kern T3}, that is we prove the existence of a suitably smooth matrix-valued function $H_{--}=(h_{ij})_{1 \leq i,j \leq m}$ which is strictly lower triangular and satisfies \eqref{kern equ T3} (in some sense).

Writing \eqref{kern equ T3} component-wise, this gives
\begin{equation}\label{kern T3 cpw}
\left\{\begin{array}{l}
\ds \pt{h_{ij}}(t,x,\xi)+\lambda_i(t,x)\px{h_{ij}}(t,x,\xi)+\lambda_j(t,\xi)\pxi{h_{ij}}(t,x,\xi)
+\pxi{\lambda_j}(t,\xi) h_{ij}(t,x,\xi)=0, \\
\ds h_{ij}(t,0,\xi)=0, \\
\ds h_{ij}(t,x,0)=-\frac{g^2_{ij}(t,x)}{\lambda_j(t,0)}.
\end{array}\right.
\end{equation}
Since we see that the equations are uncoupled, we can fix the indices $i,j$ for the remainder of Section \ref{sect exist kern T3}:
$$i,j \in \ens{1,\ldots,m} \text{ are fixed.}$$

\subsubsection{The characteristics of \eqref{kern T3 cpw}}

For each $(t,x,\xi) \in \R^3$ fixed, we introduce the characteristic curve $\chi_{ij}(\cdot;t,x)$ associated with the hyperbolic equation \eqref{kern T3 cpw} passing through the point $(t,x,\xi)$, i.e.
$$\chi_{ij}(s;t,x,\xi)=(s,\chi_i(s;t,x),\chi_j(s;t,\xi)), \quad \forall s \in \R,$$
where we recall that $\chi_i$ and $\chi_j$ are defined in \eqref{ODE xi}.
For every $(t,x,\xi) \in \Rect$, we have
$$\chi_{ij}(s;t,x,\xi) \in \Rect, \quad \forall s \in (\ssin_{ij}(t,x,\xi),\ssout_{ij}(t,x,\xi)),$$
where we introduced
$$
\ssin_{ij}(t,x,\xi)=\max\ens{0,\ssin_i(t,x),\ssin_j(t,\xi)}>0,
\qquad
\ssout_{ij}(t,x,\xi)=\min\ens{\ssout_i(t,x),\ssout_j(t,\xi)}.
$$

Since the speeds $\lambda_i,\lambda_j$ are negative ($i,j \leq m$), when $s$ is increasing, $s \mapsto \chi_i(s;t,x), s \mapsto \chi_j(s;t,\xi)$ are decreasing.
Therefore, the associated characteristic $\chi_{ij}(\cdot;t,x,\xi)$ will exit the domain $\Rect$ through the planes $x=0$ or $\xi=0$.
This is why we can impose boundary conditions at $(t,0,\xi)$ and $(t,x,0)$ (see \eqref{kern T3 cpw}) and this is why it is enough to (uniquely) determine a solution on $\Rect$.
To be more precise, we can split $\Rect$ into three disjoint subsets:
$$\Rect=\Rect^+_{ij} \cup \Rect^-_{ij} \cup \discset_{ij},$$
where
\begin{align*}
\Rect^+_{ij}=&\ens{(t,x,\xi) \in \Rect, \quad \ssout_i(t,x) < \ssout_j(t,\xi)}, \\
\Rect^-_{ij}=&\ens{(t,x,\xi) \in \Rect, \quad \ssout_i(t,x) > \ssout_j(t,\xi)}, \\
\discset_{ij}=&\ens{(t,x,\xi) \in \Rect, \quad \ssout_i(t,x)=\ssout_j(t,\xi)}.
\end{align*}
With these notations, the characteristic $\chi_{ij}(\cdot;t,x,\xi)$ will either exit the domain $\Rect$ through the plane $x=0$ if $(t,x,\xi) \in \Rect^+_{ij}$ or through the plane $\xi=0$ if $(t,x,\xi) \in \Rect^-_{ij}$:

\begin{proposition}\label{prop caract out}
~
\begin{enumerate}[(i)]
\item
For every $(t,x,\xi) \in \Rect^+_{ij}$, we have $\chi_{ij}(s;t,x,\xi) \in \Rect^+_{ij}$ for every $s \in (t,\ssout_i(t,x))$.

\item
For every $(t,x,\xi) \in \Rect^-_{ij}$, we have $\chi_{ij}(s;t,x,\xi) \in \Rect^-_{ij}$ for every $s \in (t,\ssout_j(t,\xi))$.

\item\label{point iii}
For every $(t,x,\xi) \in \discset_{ij}$, we have $\chi_{ij}(s;t,x,\xi) \in \discset_{ij}$ for every $s \in (t,\ssout_i(t,x))=(t,\ssout_j(t,\xi))$.
\end{enumerate}

\end{proposition}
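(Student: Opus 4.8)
The plan is to read off all three assertions directly from the invariance property \eqref{invariance} of the entry/exit times along the flow, combined with the elementary observation that for a point $(t,x,\xi)\in\Rect$ one has $x,\xi\in(0,1)$, hence $x\neq 1$ and $\xi\neq 1$, so that $\ssin_i(t,x)<t$ and $\ssin_j(t,\xi)<t$ by \eqref{sin is zero}. There is no fixed-point or PDE argument needed here; the statement is purely a fact about the two ODE flows $\chi_i(\cdot;t,x)$ and $\chi_j(\cdot;t,\xi)$ and about how the sign of $\ssout_i-\ssout_j$ is preserved when one moves along them.

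Concretely, I would fix $(t,x,\xi)\in\Rect$ and $s\in(t,\sigma)$, where $\sigma=\ssout_i(t,x)$ in case (i), $\sigma=\ssout_j(t,\xi)$ in case (ii), and $\sigma=\ssout_i(t,x)=\ssout_j(t,\xi)$ in case (iii). In every case the defining (in)equality of the relevant subset gives $\sigma\leq\ssout_i(t,x)$ and $\sigma\leq\ssout_j(t,\xi)$, so that $t<s<\ssout_i(t,x)$ and $t<s<\ssout_j(t,\xi)$; together with $s>t>\ssin_i(t,x)$ and $s>t>\ssin_j(t,\xi)$ this shows that $s$ belongs to both intervals of validity $(\ssin_i(t,x),\ssout_i(t,x))$ and $(\ssin_j(t,\xi),\ssout_j(t,\xi))$. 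Since $\lambda_i,\lambda_j<0$, the flows $\chi_i(\cdot;t,x)$ and $\chi_j(\cdot;t,\xi)$ are strictly decreasing and vanish exactly at $\ssout_i(t,x)$ and $\ssout_j(t,\xi)$, whence $\chi_i(s;t,x)\in(0,x)\subset(0,1)$ and $\chi_j(s;t,\xi)\in(0,\xi)\subset(0,1)$; this already yields $\chi_{ij}(s;t,x,\xi)\in\Rect$.

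It then remains to identify which piece of $\Rect$ the point lands in. Applying \eqref{invariance} to the index $i$ at $(t,x)$ and to the index $j$ at $(t,\xi)$ — legitimate because, as just checked, $s$ is in both intervals of validity — gives $\ssout_i(s,\chi_i(s;t,x))=\ssout_i(t,x)$ and $\ssout_j(s,\chi_j(s;t,\xi))=\ssout_j(t,\xi)$. Comparing these two identities with the strict inequality (resp. the reverse strict inequality, resp. the equality) that defines $\Rect^+_{ij}$ (resp. $\Rect^-_{ij}$, resp. $\discset_{ij}$) places $\chi_{ij}(s;t,x,\xi)$ in the same subset, which is exactly the claim; in case (iii) the identity of intervals $(t,\ssout_i(t,x))=(t,\ssout_j(t,\xi))$ is just a reformulation of $(t,x,\xi)\in\discset_{ij}$. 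The only point demanding any care — hence the ``main obstacle'', though a very mild one — is precisely the verification that $s$ lies in the common interval of validity of \eqref{invariance}, which is where the hypothesis $(t,x,\xi)\in\Rect$ (excluding $x=1$ and $\xi=1$) and the choice of $\sigma$ as the smaller of the two exit times are used; everything else is a direct substitution.
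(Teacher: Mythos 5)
Your argument is correct and is exactly the paper's proof: the paper disposes of all three points with the single remark that they ``directly follow from \eqref{invariance}'', and your write-up simply makes explicit the (routine) verification that $s$ lies in the common interval of validity of \eqref{invariance} and that the flows remain in $(0,1)$. No discrepancy to report.
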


These three points directly follow from \eqref{invariance}.

\subsubsection{Existence and regularity of a solution to \eqref{kern T3 cpw}}\label{sect exist Fred}

Writing the solution of \eqref{kern T3 cpw} along the characteristic curve $\chi_{ij}(s;t,x,\xi)$ for $s \in [\ssin_{ij}(t,x,\xi),\ssout_{ij}(t,x,\xi)]$ and using the boundary conditions, we obtain the following ODE:
\begin{equation}\label{ODE kerh}
\left\{\begin{array}{l}
\ds \dds h_{ij}(\chi_{ij}(s;t,x,\xi))=-\pxi{\lambda_j}(s,\chi_j(s;t,\xi))h_{ij}(\chi_{ij}(s;t,x,\xi)),\\
\ds h_{ij}(\chi_{ij}(\ssout_{ij}(t,x,\xi);t,x,\xi))=b_{ij}(t,x,\xi),
\end{array}\right.
\end{equation}
where
\begin{equation}\label{explicit bij}
b_{ij}(t,x,\xi)
=
\left\{\begin{array}{cl}
\ds 0 & \mbox{ if } (t,x,\xi) \in \Rect^+_{ij}, \\
\ds -\frac{g^2_{ij}(\ssout_j(t,\xi),\chi_i(\ssout_j(t,\xi);t,x))}{\lambda_j(\ssout_j(t,\xi),0)} & \mbox{ if } (t,x,\xi) \in \Rect^-_{ij}.
\end{array}\right.
\end{equation}
Integrating this ODE over $[t,\ssout_{ij}(t,x,\xi)]$ yields the integral equation
\begin{equation}\label{valuehij}
h_{ij}(t,x,\xi)=b_{ij}(t,x,\xi)
+\int_t^{\ssout_{ij}(t,x,\xi)} \pxi{\lambda_j}(s,\chi_j(s;t,\xi))h_{ij}(\chi_{ij}(s;t,x,\xi)) \, ds.
\end{equation}

\noindent
In this case, this integral equation is very easily solved by taking (as it is in fact directly seen from the ODE \eqref{ODE kerh}):
\begin{equation}\label{kij Fred}
h_{ij}(t,x,\xi)
=b_{ij}(t,x,\xi)e^{\int_t^{\ssout_{ij}(t,x,\xi)} \pxi{\lambda_j}(s,\chi_j(s;t,\xi)) \, ds}.
\end{equation}
Clearly,  $h_{ij}=0$ for $i \leq j$ (i.e. $H_{--}$ is indeed strictly lower triangular) since $g^2_{ij}=0$ for such indices (see \eqref{Gamma cond zero}).
Obviously, $h_{ij} \in C^0(\clos{\Rect^+_{ij}}) \cap L^\infty(\Rect^+_{ij})$.
On the other hand, thanks in particular to the regularities \eqref{reg xi}, \eqref{reg sin sout}, the bounds \eqref{bounds sin sout} and the assumption $\px{\Lambda} \in L^{\infty}((0,+\infty)\times(0,1))^{n \times n}$, we can check that
$$h_{ij} \in C^0(\clos{\Rect^-_{ij}}) \cap L^\infty(\Rect^-_{ij}).$$

\subsubsection{Characterization of $\Rect^{\pm}_{ij}$ and $\discset_{ij}$}\label{sect caract surf}

Let us now show that
\begin{equation}\label{caract S1 S2}
\begin{array}{l}
\Rect^-_{ij}=\ens{(t,x,\xi) \in \Rect, \quad \xi<\psi_{ij}(t,x)}, \\
\Rect^+_{ij}=\ens{(t,x,\xi) \in \Rect, \quad \xi>\psi_{ij}(t,x)},
\end{array}
\end{equation}
where $\psi_{ij} \in C^1([0,+\infty)\times[0,1])$ satisfies the semi-linear hyperbolic equation \eqref{equ psi}.
First of all, it follows from \eqref{increases} and the implicit function theorem that there exists a function $\psi_{ij}\in C^1([0,+\infty)\times[0,1])$, $0 \leq \psi_{ij} \leq 1$, such that
\begin{equation}\label{IFT}
\ssout_i(t,x)=\ssout_j(t,\xi)
\qquad \Longleftrightarrow \qquad
\xi=\psi_{ij}(t,x).
\end{equation}
This shows that
\begin{equation}\label{caract interface}
\discset_{ij}=\ens{(t,x,\xi)\in \Rect, \quad \xi=\psi_{ij}(t,x)}.
\end{equation}

\noindent
On the other hand, thanks to \eqref{increases} and \eqref{IFT} we have
$$
\xi>\psi_{ij}(t,x)
\qquad \Longleftrightarrow \qquad
\ssout_j(t,\xi)>\ssout_j(t,\psi_{ij}(t,x))=\ssout_i(t,x).
$$
This shows the equality \eqref{caract S1 S2} for $\Rect^+_{ij}$.
The equality for $\Rect^-_{ij}$ can be proved similarly.

It remains to show that $\psi_{ij}$ satisfies the semi-linear hyperbolic equation \eqref{equ psi}.
This in fact follows from \eqref{caract interface} and \ref{point iii} of Proposition \ref{prop caract out}.
Indeed, thanks to these results, we have
$$
\chi_j(s;t,\psi_{ij}(t,x))=\psi_{ij}(s,\chi_i(s;t,x)), \quad \forall s \in (t,\ssout_i(t,x)) =(t,\ssout_j(t,\psi_{ij}(t,x))).
$$
Taking the derivative of this identity at $s=t^+$, we immediately obtain the equation in \eqref{equ psi}.
On the other hand, letting $s \to \ssout_i(t,x)^-=\ssout_j(t,\psi_{ij}(t,x))^-$ and then letting $x \to 0^+$, we obtain the second condition $\psi_{ij}(t,0)=0$.

\subsection{Kernel for the Volterra transformation}\label{sect exist kern T2}

In this section we prove Theorem \ref{thm exist kern}, that is we prove the existence of a suitably smooth matrix-valued function $K=(k_{ij})_{1 \leq i,j \leq n}$ such that
\begin{equation}\label{kern equ bis}
\left\{\begin{array}{l}
\ds \pt{K}(t,x,\xi)
+\Lambda(t,x)\px{K}(t,x,\xi)
+\pxi{K}(t,x,\xi)\Lambda(t,\xi)
+K(t,x,\xi)\widetilde{M}^1(t,\xi)=0, \\
\ds K(t,x,x)\Lambda(t,x)-\Lambda(t,x)K(t,x,x)=-M^1(t,x),
\end{array}\right.
\end{equation}
where we introduced the notation
\begin{equation}\label{def Mtu}
\widetilde{M}^1(t,\xi)=\pxi{\Lambda}(t,\xi)+M^1(t,\xi).
\end{equation}
Note in particular that $\widetilde{M}^1 \in L^{\infty}((0,+\infty)\times(0,1))^{n \times n}$ thanks to the assumption \eqref{reg parameters}.
Besides, noticing \eqref{def G2 2}, we also want the matrix
\begin{equation}\label{G2-- triang}
-K_{--}(t,x,0)\Lambda_{--}(t,0)-K_{-+}(t,x,0)\Lambda_{++}(t,0)Q^1(t) \qquad \text{ to be strictly lower triangular.}
\end{equation}

\subsubsection{Preliminaries}

Let us rewrite \eqref{kern equ bis} by block.
It is equivalent to the following four sub-systems:
$$
\left\{\begin{array}{l}
\ds \pt{K_{--}}(t,x,\xi)
+\Lambda_{--}(t,x)\px{K_{--}}(t,x,\xi)
+\pxi{K_{--}}(t,x,\xi)\Lambda_{--}(t,\xi)
\\
\quad \quad \quad \quad \quad \quad \quad \quad \quad \quad \quad \quad \quad \quad \quad \quad \quad
+K_{--}(t,x,\xi)\widetilde{M}_{--}^1(t,\xi)
+K_{-+}(t,x,\xi)\widetilde{M}_{+-}^1(t,\xi)
=0,
\\
\ds K_{--}(t,x,x)\Lambda_{--}(t,x)-\Lambda_{--}(t,x)K_{--}(t,x,x)=-M^1_{--}(t,x),
\end{array}\right.
$$

$$
\left\{\begin{array}{l}
\ds \pt{K_{-+}}(t,x,\xi)
+\Lambda_{--}(t,x)\px{K_{-+}}(t,x,\xi)
+\pxi{K_{-+}}(t,x,\xi)\Lambda_{++}(t,\xi)
\\
\quad \quad \quad \quad \quad \quad \quad \quad \quad \quad \quad \quad \quad \quad \quad \quad \quad
+K_{--}(t,x,\xi)\widetilde{M}_{-+}^1(t,\xi)
+K_{-+}(t,x,\xi)\widetilde{M}_{++}^1(t,\xi)
=0,
\\
\ds K_{-+}(t,x,x)\Lambda_{++}(t,x)-\Lambda_{--}(t,x)K_{-+}(t,x,x)=-M^1_{-+}(t,x),
\end{array}\right.
$$

$$
\left\{\begin{array}{l}
\ds \pt{K_{+-}}(t,x,\xi)
+\Lambda_{++}(t,x)\px{K_{+-}}(t,x,\xi)
+\pxi{K_{+-}}(t,x,\xi)\Lambda_{--}(t,\xi)
\\
\quad \quad \quad \quad \quad \quad \quad \quad \quad \quad \quad \quad \quad \quad \quad \quad \quad
+K_{+-}(t,x,\xi)\widetilde{M}_{--}^1(t,\xi)
+K_{++}(t,x,\xi)\widetilde{M}_{+-}^1(t,\xi)
=0,
\\
\ds K_{+-}(t,x,x)\Lambda_{--}(t,x)-\Lambda_{++}(t,x)K_{+-}(t,x,x)=-M^1_{+-}(t,x),
\end{array}\right.
$$

$$
\left\{\begin{array}{l}
\ds \pt{K_{++}}(t,x,\xi)
+\Lambda_{++}(t,x)\px{K_{++}}(t,x,\xi)
+\pxi{K_{++}}(t,x,\xi)\Lambda_{++}(t,\xi)
\\
\quad \quad \quad \quad \quad \quad \quad \quad \quad \quad \quad \quad \quad \quad \quad \quad \quad
+K_{+-}(t,x,\xi)\widetilde{M}_{-+}^1(t,\xi)
+K_{++}(t,x,\xi)\widetilde{M}_{++}^1(t,\xi)
=0,
\\
\ds K_{++}(t,x,x)\Lambda_{++}(t,x)-\Lambda_{++}(t,x)K_{++}(t,x,x)=-M^1_{++}(t,x),
\end{array}\right.
$$

\begin{remark}
We see that $K_{--}$ is coupled only with $K_{-+}$ and that $K_{+-}$ is coupled only with $K_{++}$.
Moreover, the systems satisfied by $(K_{--},K_{-+})$ and by $(K_{+-},K_{++})$ are similar.
Therefore, from now on we only focus on the system satisfied by $(K_{--},K_{-+})$ (note that the extra condition \eqref{G2-- triang} only concerns this system).
In addition, because of the nature of the coupling terms inside the domain (namely, matrix multiplication by the right), we see that the entries from different rows are not coupled.
Therefore, for the rest of Section \ref{sect exist kern T2}, we assume that
$$i \in \ens{1,\ldots,m} \text{ is fixed.}$$
\end{remark}

Let us now rewrite the equations for $K_{--}$ and $K_{-+}$ component-wise.
For convenience, we introduce
\begin{equation}\label{def R}
r_{ij}(t,x)=\frac{-m^1_{ij}(t,x)}{\lambda_j(t,x)-\lambda_i(t,x)} \qquad (j \neq i).
\end{equation}
Note that $r_{ij} \in C^0([0,+\infty)\times[0,1])$.
Moreover, $r_{ij} \in L^{\infty}((0,+\infty)\times(0,1))$ thanks to \eqref{hyp speeds bis}.

\noindent
We have:
\begin{enumerate}[1)]
\item
If $j \neq i$, then
\begin{equation}\label{kern equ A}
\left\{\begin{array}{l}
\ds \pt{k_{ij}}(t,x,\xi)+\lambda_i(t,x) \px{k_{ij}}(t,x,\xi) +\lambda_j(t,\xi) \pxi{k_{ij}}(t,x,\xi)
+\sum_{\ell=1}^n  k_{i \ell}(t,x,\xi)\widetilde{m}^1_{\ell j}(t,\xi)
=0, \\
\ds k_{ij}(t,x,x)=r_{ij}(t,x).
\end{array}\right.
\end{equation}

\item
If $j=i$, then
\begin{equation}\label{kern equ C}
\pt{k_{ii}}(t,x,\xi)+\lambda_i(t,x) \px{k_{ii}}(t,x,\xi) +\lambda_i(t,\xi) \pxi{k_{ii}}(t,x,\xi)
+\sum_{\ell=1}^n  k_{i \ell}(t,x,\xi)\widetilde{m}^1_{\ell i}(t,\xi)
=0.
\end{equation}

\end{enumerate}

The geometric situation of the characteristics is more complicated than in Section \ref{sect exist kern T3}, it is detailed in Section \ref{sect caract Volt} below.
For the moment, let us just point out that we will have to consider parameters $s<t$ (compare with Section \ref{sect exist kern T3}) and, consequently, we should also add an artificial boundary condition at $t=0$ (the value of $k_{ij}$ at a point $(t,x,\xi) \in\Tau$ for sufficiently small $t$ can not be obtained from its values on the planes $\xi=x$ or $x=1$).
To avoid imposing such a condition we can equivalently study \eqref{kern equ A}-\eqref{kern equ C} on the domain extended in time
$$\Taub=\ens{(t,x,\xi) \in \R\times(0,1)\times(0,1), \quad \xi<x}.$$
Therefore, we need the values of $\widetilde{m}_{\ell j}^1$ and $r_{ij}$ for negative $t$.
We also need the values of $q^1_{\ell j}$ for negative $t$ since we want to consider the property \eqref{G2-- triang}.
To this end we extend $M$ to $\R \times[0,1]$ (recall that its diagonal elements were already extended in the proof of Proposition \ref{exist transfophi}) and we extend $Q$ to $\R$ in such a way that the property \eqref{reg parameters} is preserved.
This extends $\widetilde{m}_{\ell j}^1$ and $r_{ij}$ to $\R \times[0,1]$ and $q^1_{\ell j}$ to $\R$ through the formula \eqref{def Mtu}, \eqref{def R} and \eqref{def M1}, \eqref{sol equ phi_i}, with
$$
\widetilde{m}_{\ell j}^1, r_{ij} \in C^0(\mathbb{R}\times[0,1]) \cap L^{\infty}(\R\times(0,1)),
\qquad q^1_{\ell j} \in C^0(\R) \cap L^{\infty}(\R).
$$

\subsubsection{The characteristics of \eqref{kern equ A}-\eqref{kern equ C}}\label{sect caract Volt}

For each $(t,x,\xi) \in \R^3$ fixed, we still denote by $\chi_{ij}(\cdot;t,x,\xi)$ the characteristic curve associated with the hyperbolic system \eqref{kern equ A}-\eqref{kern equ C} passing through the point $(t,x,\xi)$, i.e.
$$\chi_{ij}(s;t,x,\xi)=(s,\chi_i(s;t,x),\chi_j(s;t,\xi)), \quad \forall s \in \R.$$

We now need to find for which parameters $s$ the characteristic $\chi_{ij}(s;t,x,\xi)$ stays in the domain $\Taub$ when $(t,x,\xi) \in \Taub$.
To this end, we introduce the following sets for $j \in \ens{1,\ldots,m}$:
\begin{align*}
& \SSinp_{ij}=\ens{(t,x,\xi) \in \Taub, \quad \ssin_i(t,x) < \ssin_j(t,\xi)}, \\
& \SSinm_{ij}=\ens{(t,x,\xi) \in \Taub, \quad \ssin_i(t,x) > \ssin_j(t,\xi)}, \\
& \discsetin_{ij}=\ens{(t,x,\xi) \in \Taub, \quad \ssin_i(t,x)=\ssin_j(t,\xi)},
\end{align*}
and
\begin{align*}
& \SSoutp_{ij}=\ens{(t,x,\xi) \in \Taub, \quad \ssout_i(t,x) < \ssout_j(t,\xi)}, \\
& \SSoutm_{ij}=\ens{(t,x,\xi) \in \Taub, \quad \ssout_i(t,x) > \ssout_j(t,\xi)}, \\
& \discsetout_{ij}=\ens{(t,x,\xi) \in \Taub, \quad \ssout_i(t,x)=\ssout_j(t,\xi)}.
\end{align*}
As in Section \ref{sect caract surf} we can show that $\SSoutp_{ij} \cap \Tau=\Tau^+_{ij}$ and $\SSoutm_{ij} \cap \Tau=\Tau^-_{ij}$ (we recall that $\Tau^+_{ij}$ and $\Tau^-_{ij}$ are defined in the statement of Theorem \ref{thm exist kern}).

The following proposition gives precise information about the exit of the characteristics from the domain $\Taub$ (the proof is postponed to Appendix \ref{app prop} for the sake of the presentation; we refer to Figures \ref{figure prop caract 1}, \ref{figure prop caract 2}, \ref{figure prop caract 3} and \ref{figure prop caract 4} for a clarification of the geometric situation at a fixed $t$):

\begin{proposition}\label{prop caract exit}
~
\begin{enumerate}[(i)]
\item\label{exit 1}
For every $j \in \ens{1,\ldots,i-1}$, there exists a unique $\sinterin_{ij} \in C^0(\clos{\Taub})$ with $(t,x,\xi) \mapsto t-\sinterin_{ij}(t,x,\xi) \in L^{\infty}(\Taub)$ such that, for every $t \in \R$ and $0 \leq \xi<x<1$, we have $\sinterin_{ij}(t,x,\xi)<t$ (and $\sinterin_{ij}(t,x,\xi)=t$ otherwise) with
$$
\chi_{ij}(s;t,x,\xi) \in \Taub, \quad \forall s \in \left(\sinterin_{ij}(t,x,\xi),t\right),
$$
and
$$
\left\{\begin{array}{ll}
\chi_j(\sinterin_{ij}(t,x,\xi);t,\xi)=\chi_i(\sinterin_{ij}(t,x,\xi);t,x)
& \mbox{ if } (t,x,\xi) \in \clos{\SSinp_{ij}}, \\
\chi_i(\sinterin_{ij}(t,x,\xi);t,x)=1
& \mbox{ if } (t,x,\xi) \in \clos{\SSinm_{ij}}.
\end{array}\right.
$$

\item\label{exit 2}
For $j=i$, there exists a unique $\sinterout_{ii} \in C^0(\clos{\Taub})$ with $(t,x,\xi) \mapsto \sinterout_{ii}(t,x,\xi)-t \in L^{\infty}(\Taub)$ such that, for every $t \in \R$ and $0<\xi \leq x \leq 1$, we have $\sinterout_{ii}(t,x,\xi)>t$ (and $\sinterout_{ii}(t,x,\xi)=t$ otherwise) and, if in addition $\xi<x$, then we have
$$\chi_{ii}(s;t,x,\xi) \in \Taub, \quad \forall s \in \left(t, \sinterout_{ii}(t,x,\xi)\right),$$
and
$$
\chi_i(\sinterout_{ii}(t,x,\xi);t,\xi)=0.
$$

\item\label{exit 3}
For every $j \in \ens{i+1,\ldots,m}$, there exists a unique $\sinterout_{ij} \in C^0(\clos{\Taub})$ with $(t,x,\xi) \mapsto \sinterout_{ij}(t,x,\xi)-t \in L^{\infty}(\Taub)$ such that, for every $t \in \R$ and $0<\xi<x \leq 1$, we have $\sinterout_{ij}(t,x,\xi)>t$ (and $\sinterout_{ij}(t,x,\xi)=t$ otherwise) with
$$\chi_{ij}(s;t,x,\xi) \in \Taub, \quad \forall s \in \left(t, \sinterout_{ij}(t,x,\xi)\right),$$
and
$$
\left\{\begin{array}{ll}
\chi_j(\sinterout_{ij}(t,x,\xi);t,\xi)=\chi_i(\sinterout_{ij}(t,x,\xi);t,x)
& \mbox{ if } (t,x,\xi) \in \clos{\SSoutp_{ij}}, \\
\chi_j(\sinterout_{ij}(t,x,\xi);t,\xi)=0
& \mbox{ if } (t,x,\xi) \in \clos{\SSoutm_{ij}}.
\end{array}\right.
$$

\item\label{exit 4}
For every $j \in \ens{m+1,\ldots,n}$, there exists a unique $\sinterout_{ij} \in C^0(\clos{\Taub})$ with $(t,x,\xi) \mapsto \sinterout_{ij}(t,x,\xi)-t \in L^{\infty}(\Taub)$ such that, for every $t \in \R$ and $0 \leq \xi<x \leq 1$, we have $\sinterout_{ij}(t,x,\xi)>t$ (and $\sinterout_{ij}(t,x,\xi)=t$ otherwise) with
$$\chi_{ij}(s;t,x,\xi) \in \Taub, \quad \forall s \in \left(t, \sinterout_{ij}(t,x,\xi)\right),$$
and
$$
\chi_j(\sinterout_{ij}(t,x,\xi);t,\xi)=\chi_i(\sinterout_{ij}(t,x,\xi);t,x).
$$
\end{enumerate}
\end{proposition}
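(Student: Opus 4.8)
The plan is to obtain each of the four functions as the exit time of the characteristic $\chi_{ij}(\cdot;t,x,\xi)$ from the open prism $\Taub$ — backward from $t$ in part (i), forward in parts (ii)--(iv) — and to identify through which of the three faces $\{\xi=x\}$, $\{x=1\}$, $\{\xi=0\}$ of $\partial\Taub$ this exit occurs. First I would reduce, in each case, the membership of $\chi_{ij}(s;t,x,\xi)$ in $\Taub$ to the three scalar inequalities $\chi_j(s;t,\xi)<\chi_i(s;t,x)$, $\chi_i(s;t,x)<1$, $\chi_j(s;t,\xi)>0$, and introduce $g(s)=\chi_i(s;t,x)-\chi_j(s;t,\xi)$. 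Differentiating along the flow using \eqref{ODE xi} gives $g'(s)=\lambda_i(s,\chi_i(s;t,x))-\lambda_j(s,\chi_j(s;t,\xi))$, so at any zero of $g$ one has $g'(s)=\lambda_i(s,c)-\lambda_j(s,c)$ at the common point $c=\chi_i(s;t,x)=\chi_j(s;t,\xi)$, whose sign is governed by \eqref{hyp speeds}--\eqref{hyp speeds bis}: positive if $i>j$, negative if $i<j$, and when $j\in\{m+1,\ldots,n\}$ one even has $g'<0$ everywhere. Hence $g$ changes sign at most once in the relevant time direction, so the first exit time is well defined; for $j=i$ the flows $\chi_i(\cdot;t,x)$ and $\chi_i(\cdot;t,\xi)$ never cross by uniqueness for \eqref{ODE xi}, which is why no diagonal alternative appears there.

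For part (i) the constraint $\chi_j>0$ cannot be violated backward (both speeds negative), so the exit is either at $s=\ssin_i(t,x)$ (the unique backward time where $\chi_i=1$, by \eqref{increases}) or at the backward zero of $g$. To decide which comes first I use the definitions of $\SSinp_{ij}$ and $\SSinm_{ij}$: if $\ssin_i(t,x)<\ssin_j(t,\xi)$, evaluating $g$ at $s=\ssin_j(t,\xi)$ (where $\chi_j=1>\chi_i$) gives $g<0$ there while $g(t)>0$, so the backward zero lies in $(\ssin_j(t,\xi),t)\subset(\ssin_i(t,x),t)$ and the exit is through $\{\xi=x\}$; if instead $\ssin_i(t,x)>\ssin_j(t,\xi)$, any interior zero $s_0$ of $g$ would produce a common point $c\in(0,1)$ with $\ssin_i(s_0,c)=\ssin_i(t,x)$ and $\ssin_j(s_0,c)=\ssin_j(t,\xi)$ by the invariance \eqref{invariance}, contradicting the comparison \eqref{comparison ssout} (since $j<i$ forces $\ssin_j(s_0,c)>\ssin_i(s_0,c)$), so there is no crossing and the exit is through $\{x=1\}$ at $\ssin_i(t,x)$. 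This yields the stated alternative, the equality $\sinterin_{ij}=t$ exactly on $\{x=1\}\cup\{\xi=x\}$ via \eqref{sin is zero}, and the bound $0\le t-\sinterin_{ij}\le t-\ssin_i(t,x)<1/\epsilon$ from \eqref{bounds sin sout}. Parts (ii), (iii), (iv) run forward in the same way: for $j=i$ the exit is always through $\{\xi=0\}$ at $\ssout_i(t,\xi)<\ssout_i(t,x)$; for $j\in\{i+1,\ldots,m\}$ one dichotomizes on $\SSoutp_{ij}$ versus $\SSoutm_{ij}$ exactly as above, now using \eqref{comparison ssout} with $i<j$; and for $j\in\{m+1,\ldots,n\}$ the global monotonicity $g'<0$ makes the forward zero of $g$ unique and automatically anterior to the times where $\chi_i$ reaches $0$ or $\chi_j$ reaches $1$.

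For regularity and the uniform bounds: on the interior of $\SSinm_{ij}$ (resp. $\SSoutm_{ij}$, and everywhere in part (ii)) the exit time is one of the $C^1$ functions $\ssin_i,\ssout_i,\ssout_j$ from \eqref{reg sin sout}; on the interior of $\SSinp_{ij}$ (resp. $\SSoutp_{ij}$, and all of $\Taub$ in part (iv)) it is defined implicitly by $\chi_j(s;t,\xi)-\chi_i(s;t,x)=0$, whose $s$-derivative $\lambda_j-\lambda_i$ does not vanish at the solution, so the implicit function theorem and \eqref{reg xi} give a $C^1$ local solution. The two expressions agree on $\discsetin_{ij}$ (resp. $\discsetout_{ij}$), since there $\chi_i$ and $\chi_j$ meet exactly at the common boundary value $1$ (resp. $0$), giving a globally $C^0$ function on $\clos\Taub$ (only piecewise $C^1$ in general). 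Finally $t-\sinterin_{ij}$ and $\sinterout_{ij}-t$ belong to $L^\infty(\Taub)$ by \eqref{bounds sin sout}, as in every case the exit time lies between $t$ and one of $\ssin_i(t,x)$, $\ssout_i(t,x)$, $\ssout_j(t,\xi)$.

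The \textbf{main obstacle} is the correct identification of the exit face in the ``plus/minus'' cases (parts (i) and (iii)): it is precisely there that one must combine the invariance \eqref{invariance} of the entry/exit times along the flow with the ODE comparison principle \eqref{comparison ssout} to rule out a spurious diagonal crossing, and where one has to track the behaviour at the interfaces $\discsetin_{ij}$, $\discsetout_{ij}$ and at the edges of $\Taub$ carefully in order to secure continuity up to the closure. Everything else is a routine, if somewhat lengthy, case analysis driven by the sign of $g'$ at the zeros of $g$.
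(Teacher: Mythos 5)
Your proposal is correct and follows essentially the same strategy as the paper: reduce membership of $\chi_{ij}(s;t,x,\xi)$ in $\Taub$ to scalar inequalities, study the sign of $g(s)=\chi_i(s;t,x)-\chi_j(s;t,\xi)$, whose derivative at any zero has a fixed sign thanks to \eqref{hyp speeds}--\eqref{hyp speeds bis}, recover the exit time either as the unique zero of $g$ or as the relevant one among $\ssin_i,\ssout_i,\ssout_j$, take $\sinterout_{ii}=\ssout_i(t,\xi)$ for $j=i$, and glue the two smooth expressions across $\discsetin_{ij}$ (resp.\ $\discsetout_{ij}$) using the implicit function theorem and \eqref{bounds sin sout} for the $L^\infty$ bound. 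The one genuine variation is how the exit face is identified in the ``minus'' cases: the paper packages the monotonicity-at-zeros observation into the abstract Lemma~\ref{lem TVI} applied to $f=\chi_j-\chi_i$ on $[\max\{\ssin_i,\ssin_j\},t]$ (resp.\ on $[t,\min\{\ssout_i,\ssout_j\}]$) and reads the classification off the signs of $f$ at the two endpoints via Table~\ref{tableau}, whereas you rule out an interior crossing directly by combining the invariance \eqref{invariance} of $\ssin_\ell,\ssout_\ell$ along the flow with the comparison \eqref{comparison ssout}. Both are valid and of comparable length; the paper's endpoint-sign check is a bit more mechanical (it applies verbatim to all four items), while your argument makes the geometric obstruction more explicit. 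The only step you leave implicit is the degenerate edge $\{\xi=x\}$, where $g(t)=0$ with $g'(t)=\lambda_i-\lambda_j\neq 0$ forces $g$ to leave $\Taub$ immediately, yielding $\sinterin_{ij}=t$ (resp.\ $\sinterout_{ij}=t$); this is a routine check.
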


\begin{minipage}[c]{.46\linewidth}
\centering

\scalebox{0.75}{
\begin{tikzpicture}[scale=2]
\draw [<->] (0,2.35) node [left]  {$\xi$} -- (0,0) -- (2.35,0) node [below right] {$x$};
\draw (0,0) node [left, below] {$0$};
\draw (2,0) node [below] {$1$};
\draw (0,0) -- (2,0) -- (2,2) -- (0,0);
\draw[ultra thick, domain=0.95:2.05] plot (\x,{2*\x-2});

\draw[dashed, domain=1.1:1.6] plot (\x,{1/2+2*(\x-1)});
\draw (1.1,0.7) node {$\times$};
\draw (1,0.55) node {$s=t$};
\draw (1.5,1.5) node {$\times$} node[left] {$s=\sinterin_{ij}$};

\draw[dashed, domain=1.6:2.1] plot (\x,{1/2+2*(\x-3/2)});
\draw (1.6,0.7) node {$\times$};
\draw (1.7,0.55) node {$s=t$};
\draw (2,1.5) node {$\times$} node[right] {$s=\sinterin_{ij}$};

\draw (0.75,0.15) node {$\SSinp_{ij}$};
\draw (1.4,0.15) node {$\SSinm_{ij}$};

\draw (1.1,-0.1) node [below] {$\xi=0$};
\draw (2.1,0.9) node [right] {$x=1$};
\draw (0.65,0.85) node[rotate=45] {$\xi=x$};

      \end{tikzpicture}
      }
\captionof{figure}{Definition of $\sinterin_{ij}$}\label{figure prop caract 1}
\end{minipage}
\begin{minipage}[c]{.46\linewidth}
\centering
\scalebox{0.75}{
\begin{tikzpicture}[scale=2]
\draw [<->] (0,2.35) node [left]  {$\xi$} -- (0,0) -- (2.35,0) node [below right] {$x$};
\draw (0,0) node [left, below] {$0$};
\draw (2,0) node [below] {$1$};
\draw (0,0) -- (2,0) -- (2,2) -- (0,0);

\draw[dashed, domain=0.4:1.6] plot (\x,{1.1+(\x-1.6)});
\draw (1.6,1.1) node {$\times$};
\draw (1.75,0.95) node {$s=t$};
\draw (0.5,0) node {$\times$} node[below] {$s=\sinterout_{ii}$};

\draw (1.1,-0.1) node [below] {$\xi=0$};
\draw (2.1,0.9) node [right] {$x=1$};
\draw (0.65,0.85) node[rotate=45] {$\xi=x$};

      \end{tikzpicture}
      }
\captionof{figure}{Definition of $\sinterout_{ii}$}\label{figure prop caract 2}
\end{minipage}

\begin{minipage}[c]{.46\linewidth}
\centering

\scalebox{0.75}{
\begin{tikzpicture}[scale=2]
\draw [<->] (0,2.35) node [left]  {$\xi$} -- (0,0) -- (2.35,0) node [below right] {$x$};
\draw (0,0) node [left, below] {$0$};
\draw (2,0) node [below] {$1$};
\draw (0,0) -- (2,0) -- (2,2) -- (0,0);

\draw (1.1,-0.1) node [below] {$\xi=0$};
\draw (2.1,0.9) node [right] {$x=1$};
\draw (0.65,0.85) node[rotate=45] {$\xi=x$};

\draw[ultra thick, domain=-0.05:2.05] plot (\x,{\x/2});
\draw (1.75,1.1) node {$\SSoutp_{ij}$};
\draw (1.75,0.5) node {$\SSoutm_{ij}$};

\draw[dashed, domain=0.4:1] plot (\x,{1/4+(\x-1)/2});
\draw (0.5,0) node {$\times$} node[below] {$s=\sinterout_{ij}$};
\draw (1,0.25) node {$\times$};
\draw (1,0.25) node[right] {$s=t$};

\draw[dashed, domain=0.2:1.1] plot (\x,{0.7+(\x-1.1)/2});
\draw (0.3,0.3) node {$\times$};
\draw (0.175,0.5) node {$s=\sinterout_{ij}$};
\draw (1.1,0.7) node {$\times$};
\draw (1.1,0.7) node[above] {$s=t$};

      \end{tikzpicture}
      }
\captionof{figure}{Definition of $\sinterout_{ij}$ when $i<j \leq m$}\label{figure prop caract 3}

\end{minipage}
\begin{minipage}[c]{.46\linewidth}
\centering

\scalebox{0.75}{
\begin{tikzpicture}[scale=2]
\draw [<->] (0,2.35) node [left]  {$\xi$} -- (0,0) -- (2.35,0) node [below right] {$x$};
\draw (0,0) node [left, below] {$0$};
\draw (2,0) node [below] {$1$};
\draw (0,0) -- (2,0) -- (2,2) -- (0,0);

\draw[dashed, domain=1.0875:1.5] plot (\x,{1/4-3*(\x-1.5)});
\draw (1.5,0.25) node {$\times$} node[below] {$s=t$};
\draw (1.1875,1.1875) node {$\times$} node[left] {$s=\sinterout_{ij}$};

\draw (1.1,-0.1) node [below] {$\xi=0$};
\draw (2.1,0.9) node [right] {$x=1$};
\draw (0.65,0.85) node[rotate=45] {$\xi=x$};

      \end{tikzpicture}
      }
\captionof{figure}{Definition of $\sinterout_{ij}$ when $j>m$}\label{figure prop caract 4}
\end{minipage}

\vspace{1cm}

In order to show that the system \eqref{kern equ A}-\eqref{kern equ C} is well-posed, we see from Proposition \ref{prop caract exit} that we need to add some conditions:

\begin{enumerate}[1)]
\item
when $j \in \ens{1,\ldots,i-1}$, we will consider the following artificial boundary condition at $x=1$:
$$k_{ij}(t,1,\xi)=a_{ij}(t,\xi), \quad \forall j \in \ens{1,\ldots,i-1},$$
where $a_{ij} \in C^0(\R\times[0,1]) \cap L^{\infty}(\R\times(0,1))$ is any function that satisfies the corresponding $C^0$-compatibility conditions at $(t,x,\xi)=(t,1,1)$, namely:
\begin{equation}\label{compa cond ra}
a_{ij}(t,1)=r_{ij}(t,1), \quad \forall t \in \R.
\end{equation}

\item
when $j \in \ens{i,\ldots,m}$, we have some freedom for the boundary condition.
We choose to consider the following one in order to obtain \eqref{G2-- triang}:
$$k_{ij}(t,x,0)=\sum_{\ell=1}^{p} k_{i, m+\ell}(t,x,0)\widetilde{q}^1_{\ell j}(t), \quad \forall j \in \ens{i,\ldots,m},$$
where we set
$$\widetilde{q}^1_{\ell j}(t)=-\frac{1}{\lambda_j(t,0)} \lambda_{m+\ell}(t,0) q^1_{\ell j}(t).$$
Note that $\widetilde{q}^1_{\ell j} \in C^0(\R) \cap L^{\infty}(\R)$.

\end{enumerate}

In summary, we are going to solve the following coupled hyperbolic system:
\begin{enumerate}[1)]
\item
If $j \in \ens{1,\ldots,i-1}$, then
\begin{equation}\label{kern equ ij 1}
\left\{\begin{array}{l}
\ds \pt{k_{ij}}(t,x,\xi)+\lambda_i(t,x) \px{k_{ij}}(t,x,\xi) +\lambda_j(t,\xi) \pxi{k_{ij}}(t,x,\xi)
+\sum_{\ell=1}^n  k_{i \ell}(t,x,\xi)\widetilde{m}^1_{\ell j}(t,\xi)
=0, \\
\ds k_{ij}(t,x,x)=r_{ij}(t,x), \\
\ds k_{ij}(t,1,\xi)=a_{ij}(t,\xi).
\end{array}\right.
\end{equation}

\item
If $j=i$, then
\begin{equation}\label{kern equ ij 2}
\left\{\begin{array}{l}
\ds \pt{k_{ii}}(t,x,\xi)+\lambda_i(t,x) \px{k_{ii}}(t,x,\xi) +\lambda_i(t,\xi) \pxi{k_{ii}}(t,x,\xi)
+\sum_{\ell=1}^n  k_{i \ell}(t,x,\xi)\widetilde{m}^1_{\ell i}(t,\xi)
=0,
\\
\ds k_{ii}(t,x,0)=\sum_{\ell=1}^{p} k_{i, m+\ell}(t,x,0)\widetilde{q}^1_{\ell i}(t).
\end{array}\right.
\end{equation}

\item
If $j \in \ens{i+1,\ldots,m}$, then
\begin{equation}\label{kern equ ij extr1}
\left\{\begin{array}{l}
\ds \pt{k_{ij}}(t,x,\xi)+\lambda_i(t,x) \px{k_{ij}}(t,x,\xi) +\lambda_j(t,\xi) \pxi{k_{ij}}(t,x,\xi)
+\sum_{\ell=1}^n  k_{i \ell}(t,x,\xi)\widetilde{m}^1_{\ell j}(t,\xi)
=0, \\
\ds k_{ij}(t,x,x)=r_{ij}(t,x),
\\
\ds k_{ij}(t,x,0)=\sum_{\ell=1}^{p} k_{i, m+\ell}(t,x,0)\widetilde{q}^1_{\ell j}(t).\\
\end{array}\right.
\end{equation}

\item
If $j \in \ens{m+1,\ldots,n}$, then
\begin{equation}\label{kern equ ij 3}
\left\{\begin{array}{l}
\ds \pt{k_{ij}}(t,x,\xi)+\lambda_i(t,x) \px{k_{ij}}(t,x,\xi)+\lambda_j(t,\xi) \pxi{k_{ij}}(t,x,\xi)
+\sum_{\ell=1}^n  k_{i \ell}(t,x,\xi)\widetilde{m}^1_{\ell j}(t,\xi)
=0, \\
\ds k_{ij}(t,x,x)=r_{ij}(t,x).
\end{array}\right.
\end{equation}
\end{enumerate}

\subsubsection{Transformation into integral equations}\label{charact j>i}

To prove the existence and uniqueness of the solution to the kernel equations \eqref{kern equ ij 1}-\eqref{kern equ ij 3} on $\Taub$,  we use the classical strategy that consists in transforming these hyperbolic equations into integral equations.
Then, in the next subsection, we will prove that this system of integral equations has a unique solution by using a fixed-point argument and appropriate estimates.

Let us introduce
$$
\widetilde{k}^0_{ij}(t,x,\xi)=
\left\{\begin{array}{ll}
\ds r_{ij}\left(\sinterin_{ij}(t,x,\xi),\chi_i\left(\sinterin_{ij}(t,x,\xi);t,x\right)\right) & \quad \mbox{ if } j \in \ens{1,\ldots,i-1} \mbox{ and } (t,x,\xi) \in \clos{\SSinp_{ij}},
\\
\ds a_{ij}\left(\sinterin_{ij}(t,x,\xi),\chi_j\left(\sinterin_{ij}(t,x,\xi);t,\xi\right)\right) & \quad \mbox{ if } j \in \ens{1,\ldots,i-1} \mbox{ and }  (t,x,\xi) \in \clos{\SSinm_{ij}}, \\
\ds r_{ij}\left(\sinterout_{ij}(t,x,\xi),\chi_i\left(\sinterout_{ij}(t,x,\xi);t,x\right)\right) & \quad \mbox{ if } j \in \ens{i+1,\ldots,m} \mbox{ and } (t,x,\xi) \in \clos{\SSoutp_{ij}},
\\
\ds r_{ij}\left(\sinterout_{ij}(t,x,\xi),\chi_i\left(\sinterout_{ij}(t,x,\xi);t,x\right)\right) & \quad \mbox{ if } j \in \ens{m+1,\ldots,n}.
\end{array}\right.
$$
Thanks to the $C^0$-compatibility condition \eqref{compa cond ra}, note in particular that
\begin{equation}\label{reg hatvarphi}
\widetilde{k}^0_{ij} \in C^0(\clos{\Taub}), \quad \forall j \in \ens{1,\ldots,i-1}.
\end{equation}

\noindent
Using now Proposition \ref{prop caract exit}, we can obtain that

\begin{enumerate}[1)]
\item
For $j \in \ens{1,\ldots,i-1}$, integrating \eqref{kern equ ij 1}  along the characteristic curve $\chi_{ij}(s;t,x,\xi)$ for $s \in (\sinterin_{ij}(t,x,\xi),t)$ yields the following integral equation:
$$
k_{ij}(t,x,\xi)
=\widetilde{k}^0_{ij}(t,x,\xi)
-\sum_{\ell=1}^n\int_{\sinterin_{ij}(t,x,\xi)}^{t}
k_{i \ell}(\chi_{ij}(s;t,x,\xi))
\widetilde{m}^{1}_{\ell j}(s,\chi_j(s;t,\xi)) \, ds.
$$

\item
For $j=i$, integrating \eqref{kern equ ij 2} along the characteristic curve $\chi_{ii}(s;t,x,\xi)$ for $s \in (t,\sinterout_{ii}(t,x,\xi))$ yields the following integral equation:
\begin{multline}\label{int equ j=i}
k_{ii}(t,x,\xi)
=\sum_{\ell=1}^{p}
k_{i, m+\ell}\left(\sinterout_{ii}(t,x,\xi),\chi_i\left(\sinterout_{ii}(t,x,\xi);t,x\right), 0\right)
\widetilde{q}^1_{\ell i}\left(\sinterout_{ii}(t,x,\xi)\right) \\
+\sum_{\ell=1}^n\int_t^{\sinterout_{ii}(t,x,\xi)}
k_{i \ell}(\chi_{ii}(s;t,x,\xi))
\widetilde{m}^{1}_{\ell i}(s,\chi_i(s;t,\xi)) \, ds.
\end{multline}

\item
For $j \in \ens{i+1,\ldots,m}$, integrating \eqref{kern equ ij extr1}  along the characteristic curve $\chi_{ij}(s;t,x,\xi)$ for $s \in (t,\sinterout_{ij}(t,x,\xi))$ yields the following integral equations:
\begin{multline*}
k_{ij}(t,x,\xi)
=\widetilde{k}^0_{ij}(t,x,\xi)
\\
+\sum_{\ell=1}^n\int_t^{\sinterout_{ij}(t,x,\xi)}
k_{i \ell}(\chi_{ij}(s;t,x,\xi))
\widetilde{m}^{1}_{\ell j}(s,\chi_j(s;t,\xi)) \, ds,
\quad (t,x,\xi) \in \clos{\SSoutp_{ij}},
\end{multline*}

and
\begin{multline}\label{int equ j>i 2}
k_{ij}(t,x,\xi)
=\sum_{\ell=1}^{p}
k_{i, m+\ell}\left(\sinterout_{ij}(t,x,\xi),\chi_i\left(\sinterout_{ij}(t,x,\xi);t,x\right), 0\right)
\widetilde{q}^1_{\ell j}\left(\sinterout_{ij}(t,x,\xi)\right) \\
+\sum_{\ell=1}^n\int_t^{\sinterout_{ij}(t,x,\xi)}
k_{i \ell}(\chi_{ij}(s;t,x,\xi))
\widetilde{m}^{1}_{\ell j}(s,\chi_j(s;t,\xi)) \, ds,
\quad (t,x,\xi) \in \clos{\SSoutm_{ij}}.
\end{multline}

\item
For $j \in \ens{m+1,\ldots,n}$, integrating \eqref{kern equ ij 3} along the characteristic curve $\chi_{ij}(s;t,x,\xi)$ for $s \in (t,\sinterout_{ij}(t,x,\xi))$ yields the following integral equation:
\begin{equation}\label{int equ j>m+1}
k_{ij}(t,x,\xi)
=\widetilde{k}^0_{ij}(t,x,\xi)
+\sum_{\ell=1}^n\int_t^{\sinterout_{ij}(t,x,\xi)}
k_{i \ell}(\chi_{ij}(s;t,x,\xi))
\widetilde{m}^{1}_{\ell j}(s,\chi_j(s;t,\xi)) \, ds.
\end{equation}

\item
We now want to plug \eqref{int equ j>m+1} into \eqref{int equ j=i} and \eqref{int equ j>i 2}, respectively.
From \eqref{int equ j>m+1} we have
\begin{multline}\label{to plug}
k_{i, m+\ell}\left(\sinterout_{ij}(t,x,\xi),\chi_i\left(\sinterout_{ij}(t,x,\xi);t,x\right), 0\right)
=\widetilde{k}^0_{i, m+\ell}\left(\sinterout_{ij}(t,x,\xi),\chi_i\left(\sinterout_{ij}(t,x,\xi);t,x\right), 0\right)
\\
+\sum_{q=1}^n\int_{\sinterout_{ij}(t,x,\xi)}^{\sinterout_{i, m+\ell}\left(\sinterout_{ij}(t,x,\xi),\chi_i\left(\sinterout_{ij}(t,x,\xi);t,x\right), 0\right)}
k_{i q}\left(\chi_{i, m+\ell}\left(s;\sinterout_{ij}(t,x,\xi),\chi_i\left(\sinterout_{ij}(t,x,\xi);t,x\right), 0\right)\right)
\\
\times
\widetilde{m}^{1}_{q,m+\ell}\left(s,
\chi_{m+\ell}\left(s;\sinterout_{ij}(t,x,\xi), 0\right)
\right) \, ds.
\end{multline}
Plugging \eqref{to plug} into \eqref{int equ j=i} and \eqref{int equ j>i 2}, we obtain, for every $j \in \ens{i,\ldots,m}$ and $(t,x,\xi) \in \clos{\SSoutm_{ij}}$,
\begin{multline*}
k_{ij}(t,x,\xi)
=
\widehat k^0_{ij}(t,x,\xi)
\\
+\sum_{\ell=1}^{p}
\Bigg(
\sum_{q=1}^n\int_{\sinterout_{ij}(t,x,\xi)}^{\sinterout_{i, m+\ell}\left(\sinterout_{ij}(t,x,\xi),\chi_i\left(\sinterout_{ij}(t,x,\xi);t,x\right), 0\right)}
k_{i q}\left(\chi_{i, m+\ell}\left(s;\sinterout_{ij}(t,x,\xi),\chi_i\left(\sinterout_{ij}(t,x,\xi);t,x\right), 0\right)\right)
\\
\times
\widetilde{m}^{1}_{q,m+\ell}\left(s,
\chi_{m+\ell}\left(s;\sinterout_{ij}(t,x,\xi), 0\right)
\right) \, ds
\Bigg)
\widetilde{q}^1_{\ell j}\left(\sinterout_{ij}(t,x,\xi)\right) \\
+\sum_{\ell=1}^n\int_t^{\sinterout_{ij}(t,x,\xi)}
k_{i \ell}(\chi_{ij}(s;t,x,\xi))
\widetilde{m}^{1}_{\ell j}(s,\chi_j(s;t,\xi)) \, ds,
\end{multline*}
where we introduced
$$
\widehat k^0_{ij}(t,x,\xi)
=
\sum_{\ell=1}^{p}\widetilde{k}^0_{i, m+\ell}\left(\sinterout_{ij}(t,x,\xi),\chi_i\left(\sinterout_{ij}(t,x,\xi);t,x\right), 0\right)
\widetilde{q}^1_{\ell j}\left(\sinterout_{ij}(t,x,\xi)\right).
$$
Note that
\begin{equation}\label{reg varphi}
\widehat k^0_{ij} \in C^0(\clos{\SSoutm_{ij}}) \cap L^\infty(\SSoutm_{ij}), \quad \forall j \in \ens{i+1,\ldots,m},
\end{equation}
and, since $\SSoutm_{ii}=\Taub$ (because of \eqref{increases}),
\begin{equation}\label{reg varphi ii}
\widehat k^0_{ii} \in C^0(\clos{\Taub}) \cap L^\infty(\Taub).
\end{equation}

\end{enumerate}

\begin{remark}
Observe that, in general, for $j \in \ens{i+1,\ldots,m}$, we have
$$\widehat k^0_{ij} \neq \widetilde{k}^0_{ij} \mbox{ on } \discsetout_{ij}.$$
This is the reason why we have to consider discontinuous kernels.
\end{remark}

\subsubsection{Solution to the integral equations}

In this subsection we show that there exists a unique solution to the system of integral equations of the previous section.
This will conclude the proof of Theorem \ref{thm exist kern}.

\paragraph{Fixed-point argument.}

As it is classical, we reformulate the existence of such a solution into the existence of a fixed-point of the mapping defined by the right-hand sides of these equations.
Let us first introduce $K^0=(k^0_{ij})_{\substack{1 \leq i \leq m \\ 1 \leq j \leq n}}$ defined by
$$
k^0_{ij}(t,x,\xi)
=
\left\{\begin{array}{ll}
\ds \widetilde{k}^0_{ij}(t,x,\xi) & \mbox{ if } j \in \ens{1,\ldots,i-1}, \\
\ds \widehat k^0_{ii}(t,x,\xi) & \mbox{ if } j=i, \\
\ds \widetilde{k}^0_{ij}(t,x,\xi) & \mbox{ if } j \in \ens{i+1,\ldots,m} \mbox{ and } (t,x,\xi) \in \SSoutp_{ij}, \\
\ds \widehat k^0_{ij}(t,x,\xi) & \mbox{ if } j \in \ens{i+1,\ldots,m} \mbox{ and } (t,x,\xi) \in \SSoutm_{ij}, \\
\ds \widetilde{k}^0_{ij}(t,x,\xi) & \mbox{ if } j \in \ens{m+1,\ldots,n}.
\end{array}\right.
$$
Thanks in particular to \eqref{reg hatvarphi}, \eqref{reg varphi} and \eqref{reg varphi ii}, we see that
$$
\begin{array}{l}
\ds k^0_{ij} \in C^0(\clos{\SSoutp_{ij}}) \cap C^0(\clos{\SSoutm_{ij}}) \cap L^{\infty}(\Taub) \quad \mbox{ if } j \in \ens{i+1,\ldots,m}, \\
\ds k^0_{ij} \in C^0(\clos{\Taub}) \cap L^{\infty}(\Taub) \quad \mbox{ otherwise.}
\end{array}
$$
It is this regularity that dictates the space in which we can work.
More precisely, let us introduce the vector space $B$ defined by
\begin{equation}\label{defB}
B=
\ens{
K=(k_{ij})_{\substack{1 \leq i \leq m \\ 1 \leq j \leq n}}, \,
\begin{array}{l}
\ds k_{ij} \in C^0(\clos{\SSoutp_{ij}}) \cap C^0(\clos{\SSoutm_{ij}}) \cap L^{\infty}(\Taub) \, \mbox{ if } j \in \ens{i+1,\ldots,m}, \\
\ds k_{ij} \in C^0(\clos{\Taub}) \cap L^{\infty}(\Taub) \,\mbox{ otherwise.}
\end{array}
}.
\end{equation}
We can check that $B$ is a Banach space when equipped with the $L^\infty$ norm.
Let us now introduce the mapping
$$\Phi:B \longrightarrow B,$$
defined, for every $K \in B$, by
$$\Phi(K)=K^0+\Phi_1(K)+\Phi_2(K),$$
where, for every $(t,x,\xi) \in \clos{\Taub}$,
\begin{multline}\label{def Phiz}
\left(\Phi_1(K)\right)_{ij}(t,x,\xi)=
\\
\left\{\begin{array}{ll}
\ds
-\sum_{\ell=1}^n \int_{\sinterin_{ij}(t,x,\xi)}^{t}
k_{i \ell}(\chi_{ij}(s;t,x,\xi))
\widetilde{m}^{1}_{\ell j}(s,\chi_j(s;t,\xi)) \, ds,
& \mbox{ if } j \in \ens{1,\ldots,i-1}, \\
\ds
\sum_{\ell=1}^n\int_t^{\sinterout_{ij}(t,x,\xi)}
k_{i \ell}(\chi_{ij}(s;t,x,\xi))
\widetilde{m}^{1}_{\ell j}(s,\chi_j(s;t,\xi)) \, ds,
& \mbox{ if } j \in \ens{i,\ldots,n},
\end{array}\right.
\end{multline}
and
\begin{multline}\label{def Psiz}
\left(\Phi_2(K)\right)_{ij}(t,x,\xi)=
\\
\sum_{\ell=1}^{p}
\Bigg(
\sum_{q=1}^n\int_{\sinterout_{ij}(t,x,\xi)}^{\sinterout_{i, m+\ell}\left(\sinterout_{ij}(t,x,\xi),\chi_i\left(\sinterout_{ij}(t,x,\xi);t,x\right), 0\right)}
k_{i q}\left(\chi_{i, m+\ell}\left(s;\sinterout_{ij}(t,x,\xi),\chi_i\left(\sinterout_{ij}(t,x,\xi);t,x\right), 0\right)\right)
\\
\times
\widetilde{m}^{1}_{q,m+\ell}\left(s,
\chi_{m+\ell}\left(s;\sinterout_{ij}(t,x,\xi), 0\right)
\right) \, ds
\Bigg)
\widetilde{q}^1_{\ell j}\left(\sinterout_{ij}(t,x,\xi)\right),
\end{multline}
if $j \in \ens{i,\ldots,m}$ and $(t,x,\xi) \in \clos{\SSoutm_{ij}}$, and $\left(\Phi_2(K)\right)_{ij}(t,x,\xi)=0$ otherwise (recall that $\SSoutm_{ii}=\Taub$).

\paragraph{Regularity of the mapping.}

First of all, we have to show that $\Phi$ is well defined, i.e. that for every $K \in B$, we have indeed
\begin{equation}\label{reg Phiz Psi}
\Phi_1(K) \in B, \qquad \Phi_2(K) \in B.
\end{equation}
This is not obvious since the function $s \mapsto \chi_{ij}(s;t,x,\xi)$ may take values in the set $\discsetout_{i \ell}$, where $k_{i\ell}$ is discontinuous (even for $j \not\in \ens{i+1,\ldots,m}$, where we expect $(\Phi_1(K))_{ij}$ to be continuous by definition of $B$).
The following result, close to Proposition \ref{prop caract exit}, shows that this may happen only at one point:
\begin{proposition}\label{prop sdisc}
Let $\ell \in \ens{i+1,\ldots,m}$ be fixed.
\begin{enumerate}[(i)]
\item
For every $j \in \ens{1,\ldots,i-1}$, for every $(t,x,\xi) \in \clos{\Taub}$, there is at most one $\sdisc_{ij\ell} \in (\sinterin_{ij}(t,x,\xi),t)$ such that $\chi_{ij}(\sdisc_{ij\ell};t,x,\xi) \in \discsetout_{i \ell}$.

\item
For every $j \in \ens{i,\ldots,n}$ with $j \neq \ell$, for every $(t,x,\xi) \in \clos{\Taub}$, there is at most one $\sdisc_{ij\ell} \in (t,\sinterout_{ij}(t,x,\xi))$ such that $\chi_{ij}(\sdisc_{ij\ell};t,x,\xi) \in \discsetout_{i \ell}$.
\end{enumerate}
\end{proposition}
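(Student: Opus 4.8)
The plan is to reduce, for a fixed index $j$ and a fixed point $(t,x,\xi)\in\clos{\Taub}$, the condition ``$\chi_{ij}(s;t,x,\xi)\in\discsetout_{i\ell}$'' to a scalar equation $d(s)=0$ for a suitable $C^1$ function $d$ of the single variable $s$, and then to show that $d$ has at most one zero on the relevant range of parameters: $s\in(\sinterin_{ij}(t,x,\xi),t)$ when $j\in\ens{1,\ldots,i-1}$, and $s\in(t,\sinterout_{ij}(t,x,\xi))$ when $j\in\ens{i,\ldots,n}$ with $j\neq\ell$. On each of these intervals the characteristic $\chi_{ij}(s;t,x,\xi)=(s,\chi_i(s;t,x),\chi_j(s;t,\xi))$ stays in $\Taub$ by Proposition \ref{prop caract exit}; in particular $s$ stays in the range where the invariance property \eqref{invariance} applies, so that $\ssout_i(s,\chi_i(s;t,x))=\ssout_i(t,x)=:\sigma$ is constant in $s$. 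Hence $\chi_{ij}(s;t,x,\xi)\in\discsetout_{i\ell}$ iff $\ssout_\ell(s,\chi_j(s;t,\xi))=\sigma$, which by the very definition of $\ssout_\ell$ together with the group property \eqref{chi invariance} is equivalent to $\chi_j(s;t,\xi)=\chi_\ell(s;\sigma,0)$. I would therefore set $d(s)=\chi_j(s;t,\xi)-\chi_\ell(s;\sigma,0)$, which is $C^1$ on $\R$ by \eqref{reg xi}.

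The key computation is then that, by \eqref{ODE xi},
$d'(s)=\lambda_j\big(s,\chi_j(s;t,\xi)\big)-\lambda_\ell\big(s,\chi_\ell(s;\sigma,0)\big)$,
so that at any zero $s_0$ of $d$, where $z_0:=\chi_j(s_0;t,\xi)=\chi_\ell(s_0;\sigma,0)$, we obtain $d'(s_0)=\lambda_j(s_0,z_0)-\lambda_\ell(s_0,z_0)$. By the strict ordering of the speeds \eqref{hyp speeds} (extended to $\R^2$, cf. Remark \ref{rem ext Lambda}) this quantity is nonzero, with a sign that depends only on whether $j<\ell$ or $j>\ell$ — geometrically, two characteristics from distinct families cross transversally and always ``in the same direction''. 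An elementary argument then shows that a $C^1$ function whose derivative has one fixed nonzero sign at every one of its zeros has at most one zero in an interval: between two consecutive zeros the function would have to come back to $0$ ``against'' the prescribed sign of the derivative at the second zero, a contradiction via the intermediate value theorem. This gives the two assertions; note that in the first case $j<i\le\ell$ so $d'<0$ at every zero, while in the second case $j\neq\ell$ is exactly what guarantees $d'(s_0)\neq0$.

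I do not expect any serious obstacle: the statement is really a ``no second crossing'' fact for the characteristics, and the only points needing a little care are (a) verifying that the parameter $s$ stays in the interval of validity of \eqref{invariance}, which is precisely what Proposition \ref{prop caract exit} provides, and (b) recording that the relevant strict inequalities on the indices hold in both cases of the statement. As an alternative one could run the same argument using the characterization $\discsetout_{i\ell}=\ens{(t,x,\xi)\in\Taub:\ \xi=\psi_{i\ell}(t,x)}$ and the equation \eqref{equ psi} satisfied by $\psi_{i\ell}$, which reduces $d$ to $d(s)=\chi_j(s;t,\xi)-\psi_{i\ell}(s,\chi_i(s;t,x))$ and, after using \eqref{equ psi}, leads to the identical derivative computation $d'(s_0)=\lambda_j(s_0,z_0)-\lambda_\ell(s_0,z_0)$ at a zero.
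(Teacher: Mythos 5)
Your proposal is correct and takes essentially the same route as the paper: the paper applies its Lemma \ref{lem TVI} (a $C^1$ function with $f'<0$ at every zero has at most one zero) to $f(s)=\chi_j(s;t,\xi)-\psi_{i\ell}(s,\chi_i(s;t,x))$, which by \eqref{IFT} and \eqref{invariance} is exactly your $d(s)=\chi_j(s;t,\xi)-\chi_\ell(s;\ssout_i(t,x),0)$, and the derivative computation at a zero is the same $\lambda_j-\lambda_\ell$ sign argument split according to $j<\ell$ or $j>\ell$. Your elementary ``no second crossing'' intermediate-value argument is precisely the content of Lemma \ref{lem TVI}, and your closing remark about the $\psi_{i\ell}$ reformulation is literally the paper's choice of $f$.
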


This result shows in fact a stronger regularity than \eqref{reg Phiz Psi}, namely,
$$\Phi_1(K), \Phi_2(K) \in C^0(\clos{\Taub})^{m \times n} \cap L^{\infty}(\Taub)^{m \times n}.$$
The proof of Proposition \ref{prop sdisc} is postponed to Appendix \ref{app prop} for the sake of the presentation.

\paragraph{Contraction of the mapping.}

We will now prove that $\Phi^N$ is a contraction for $N \in \N^*$ large enough.
Therefore, the Banach fixed-point theorem can be applied, giving the existence (and uniqueness) of $K \in B$ such that
$$K=\Phi(K).$$
This will conclude the proof of Theorem \ref{thm exist kern}.
Now, to show that $\Phi^N$ is a contraction when $N$ is large, it is sufficient to prove the following estimate:

\begin{proposition}\label{contraction estimates}
There exists $C>0$ such that, for every $N \in \N^*$ and $K,H \in B$,
\begin{equation}\label{estim Phi}
\norm{\Phi^N(K)-\Phi^N(H)}_{L^{\infty}(\Taub)^{m \times n}}
\leq
\frac{C^N}{N!}
\norm{K-H}_{L^{\infty}(\Taub)^{m \times n}}.
\end{equation}

\end{proposition}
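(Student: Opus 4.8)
The plan is to prove \eqref{estim Phi} by induction on $N$, after reducing it via linearity to a statement about the iterates of a single linear operator. Since both $\Phi_1$ and $\Phi_2$ in \eqref{def Phiz}--\eqref{def Psiz} are linear in $K$, writing $\Phi=K^0+\mathcal{L}$ with $\mathcal{L}:=\Phi_1+\Phi_2$ a bounded linear operator on $B$, a straightforward induction gives $\Phi^N(K)=\sum_{j=0}^{N-1}\mathcal{L}^j(K^0)+\mathcal{L}^N(K)$, whence $\Phi^N(K)-\Phi^N(H)=\mathcal{L}^N(K-H)$. It therefore suffices to produce $C>0$ with $\norm{\mathcal{L}^N(D)}_{L^\infty(\Taub)^{m\times n}}\leq\frac{C^N}{N!}\norm{D}_{L^\infty(\Taub)^{m\times n}}$ for all $N\in\N^*$ and $D\in B$.

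The factor $1/N!$ is produced by a Volterra-type simplex estimate along characteristics. The structural fact that makes this possible is that, for a fixed row index $i$, every component $k_{i\ell}$ occurring in the integral equations \eqref{kern equ ij 1}--\eqref{kern equ ij 3} is always evaluated with its first spatial slot on the \emph{single} curve $\chi_i(\cdot;t,x)$: by the group property \eqref{chi invariance}, moving the base point along any of the characteristics $\chi_{ij}$ or $\chi_{i,m+\ell}$ leaves this $x$-component on $\chi_i(\cdot;t,x)$. This suggests introducing a nonnegative weight $\theta$ on $\Taub$, uniformly bounded by some $\Theta<\infty$ thanks to \eqref{bounds sin sout} and the $L^\infty$-estimates of Proposition \ref{prop caract exit}, built from the entry/exit times $\sinterin_{ij},\sinterout_{ij}$ and designed so that (a) $\theta(t,x,\xi)$ dominates the lengths of all the $s$-integration intervals occurring in $\Phi_1(D)_{ij}$ and $\Phi_2(D)_{ij}$ at $(t,x,\xi)$, and (b) when the integration variable $s$ sweeps the evaluation point along a characteristic, the weight at that point is at most $\theta(t,x,\xi)-|s-t|$ (this is where the invariance relations \eqref{invariance} enter, since both $\sinterin_{ij}$ and $\sinterout_{ij}$ are constant along characteristics). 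Granting such a $\theta$, one proves by induction on $N$ the pointwise bound $|\mathcal{L}^N(D)_{ij}(t,x,\xi)|\leq\frac{C_0^N\,\theta(t,x,\xi)^N}{N!}\,\norm{D}_{L^\infty(\Taub)^{m\times n}}$, with $C_0$ depending only on $n$, $\epsilon$ and the $L^\infty$-norms of $\widetilde{m}^1_{\ell j}$ and $\widetilde{q}^1_{\ell j}$: the case $N=1$ follows from (a), and the inductive step follows by inserting the induction hypothesis into \eqref{def Phiz}--\eqref{def Psiz}, using (b), substituting $u=|s-t|$, and computing $\int_0^{\theta}\frac{(\theta-u)^{N-1}}{(N-1)!}\,du=\frac{\theta^N}{N!}$ (the extra factor $C_0$ being absorbed into $C_0^N$). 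Taking the supremum over $\Taub$ and setting $C=C_0\Theta$ gives \eqref{estim Phi}.

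The main obstacle is the construction of a weight $\theta$ satisfying both (a) and (b) \emph{simultaneously for every term of $\mathcal{L}$}. The interior coupling $\sum_{\ell=1}^nk_{i\ell}(t,x,\xi)\widetilde{m}^1_{\ell j}(t,\xi)$ puts all components $k_{i1},\dots,k_{in}$ into the equation for each $k_{ij}$, while these components are propagated in different directions — the ones with $j<i$ backward to the artificial boundary $x=1$ (first line of \eqref{def Phiz}), those with $j\geq i$ forward (second line of \eqref{def Phiz}), and for $j\in\ens{i,\dots,m}$ the boundary condition at $x=0$ forces, through $\Phi_2$, a jump of the base point to $(\sinterout_{ij}(t,x,\xi),\chi_i(\sinterout_{ij}(t,x,\xi);t,x),0)$ and a switch from $\chi_j$ to $\chi_{i,m+\ell}$. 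Reconciling all this into one monotone simplex bound requires choosing $\theta$ carefully — plausibly as a maximum over the finitely many relevant entry/exit-time gaps along the fixed curve $\chi_i(\cdot;t,x)$ — together with a case analysis according to the sets $\SSinp_{ij},\SSinm_{ij},\SSoutp_{ij},\SSoutm_{ij}$ of Section \ref{sect caract Volt}. A secondary, purely technical, point is that along a characteristic segment the integrand $\mathcal{L}^{N-1}(D)_{i\ell}$ may jump across the surface $\discsetout_{i\ell}$; by Proposition \ref{prop sdisc} this surface is met at most once, hence on a set of measure zero, so the integrals above are unaffected.

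Once \eqref{estim Phi} is proved, choosing $N\in\N^*$ with $C^N/N!<1$ makes $\Phi^N$ a contraction on the Banach space $B$; the Banach fixed-point theorem then yields a unique fixed point of $\Phi^N$, which is automatically the unique fixed point of $\Phi$, and this fixed point is the matrix-valued function $K$ of Theorem \ref{thm exist kern}.
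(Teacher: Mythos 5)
Your high-level strategy coincides exactly with the paper's: you correctly observe that $\Phi_1,\Phi_2$ are linear so the estimate reduces to bounding $\mathcal{L}^N$ with $\mathcal{L}=\Phi_1+\Phi_2$, and you correctly identify that the $1/N!$ must come from a simplex estimate along characteristics, driven by a nonnegative weight $\theta$ on $\clos{\Taub}$ that is bounded, controls the lengths of all $s$-integration intervals, and decreases monotonically along every characteristic curve $\chi_{ij}(\cdot;t,x,\xi)$, $j=1,\dots,n$, in the direction the integration travels (backwards for $j<i$, forwards for $j\geq i$, and also across the $\Phi_2$ jump to the boundary and along $\chi_{i,m+\ell}$). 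This weight is exactly the function $\Omega_i$ of Lemma~\ref{key lemma}, and the properties you label (a) and (b) are exactly what the differential inequalities \eqref{Omega j<i} and \eqref{Omega j>i} deliver, after rescaling by $\epsilon_0$.

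However, you explicitly flag the construction of the weight as ``the main obstacle'' and leave it unproved, offering only the suggestion that it be ``a maximum over the finitely many relevant entry/exit-time gaps.'' This is a genuine gap, and the proposed candidate is in fact doomed. The gap $t-\sinterin_{ij}(t,x,\xi)$ (or $\sinterout_{ij}-t$) is, by \eqref{invariance}, \emph{constant} along its own characteristic $\chi_{ij}$, so this gap only shrinks along $\chi_{ij}$ trivially through the change in $t$; for a different index $j'\neq j$ there is no reason the quantity $t-\sinterin_{ij}(\chi_{ij'}(s;\cdot))$ should be monotone in $s$. A maximum of such quantities therefore need not satisfy property (b) along \emph{all} the characteristics $\chi_{ij'}$ simultaneously, and moreover a maximum is only Lipschitz, whereas the change-of-variable argument needs strict, uniform monotonicity along characteristics. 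The construction that actually works is not combinatorial: the paper builds $\Omega_i(t,x,\xi)=\omega_i^1(t,x)-\omega_i^\nu(t,\xi)$ where $\omega_i^\nu$ solves the transport equation \eqref{def omegai} with speed $\lambda_i/\nu$, and then chooses $\nu\in(r,1)$ — possible precisely because the gap assumption \eqref{hyp speeds bis} forces the speed-ratio $r=\sup\lambda_i/\lambda_j<1$ — so that the material derivative of $\Omega_i$ along $\chi_{ij}$ has a uniform sign for $j<i$ (positive) and for $j\geq i$ (negative). The sign flip at $j=i$ is the whole point and is not captured by an entry/exit-time ansatz. Finally, your reduction also needs to be pushed through the $\Phi_2$ term, where the base point jumps to $(\sinterout_{ij},\chi_i(\sinterout_{ij};t,x),0)$ and the characteristic switches from $\chi_{ij}$ to $\chi_{i,m+\ell}$; the paper handles this via Remark~\ref{Omegai decreases} (strict decrease of $\Omega_i$ along every forward characteristic $j\geq i$), a property your candidate again does not obviously have. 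In short, the skeleton is right but the key lemma — here, Lemma~\ref{key lemma} and its proof in Appendix~\ref{app lemma} — is missing.
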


To establish \eqref{estim Phi} we will use the following key lemma:

\begin{lemma}\label{key lemma}
For every $i \in \ens{1,\ldots,m}$, there exist a function $\Omega_i \in C^1(\clos{\Taub}) \cap L^{\infty}(\Taub)$ and $\epsilon_0>0$ such that, for every $(t,x,\xi) \in \clos{\Taub}$, we have $\Omega_i(t,x,\xi) \geq 0$ with
\begin{equation}\label{Omega j<i}
\frac{\partial \Omega_i}{\partial t}(t,x,\xi)+\lambda_i(t,x)\frac{\partial \Omega_i}{\partial x}(t,x,\xi)+\lambda_j(t,\xi)\frac{\partial \Omega_i}{\partial \xi}(t,x,\xi)\geq \epsilon_0,
\qquad \forall j \in \ens{1,\ldots,i-1},
\end{equation}
and
\begin{equation}\label{Omega j>i}
\frac{\partial \Omega_i}{\partial t}(t,x,\xi)+\lambda_i(t,x)\frac{\partial \Omega_i}{\partial x}(t,x,\xi)+\lambda_j(t,\xi)\frac{\partial \Omega_i}{\partial \xi}(t,x,\xi)\leq -\epsilon_0,
\qquad \forall j \in \ens{i,\ldots,n}.
\end{equation}

\end{lemma}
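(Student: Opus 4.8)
The plan is to build $\Omega_i$ explicitly out of the exit-time function $\ssout_i$, exploiting that $\ssout_i(t,\cdot)$ is a constant of motion for the flow $\chi_i$ while $t$ is not. For $j\in\ens{1,\ldots,n}$ and a $C^1$ function $f=f(t,x,\xi)$, abbreviate
$$\mathcal{L}_j f:=\pt{f}+\lambda_i(t,x)\px{f}+\lambda_j(t,\xi)\pxi{f},$$
so that the lemma asks for $\Omega_i\in C^1(\clos{\Taub})\cap L^{\infty}(\Taub)$, $\Omega_i\geq0$, with $\mathcal{L}_j\Omega_i\geq\epsilon_0$ for $j<i$ and $\mathcal{L}_j\Omega_i\leq-\epsilon_0$ for $j\geq i$. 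First I would record the three pointwise identities, valid on $\clos{\Taub}$,
$$\mathcal{L}_j[\ssout_i(t,x)]=0,\qquad \mathcal{L}_j[t]=1,\qquad \mathcal{L}_j[\ssout_i(t,\xi)]=\big(\lambda_j(t,\xi)-\lambda_i(t,\xi)\big)\,\px{\ssout_i}(t,\xi).$$
The first comes from differentiating the invariance $\ssout_i(s,\chi_i(s;t,x))=\ssout_i(t,x)$ of \eqref{invariance} at $s=t$, i.e. from $\pt{\ssout_i}(t,x)+\lambda_i(t,x)\px{\ssout_i}(t,x)=0$ (and $\ssout_i(t,x)$ does not depend on $\xi$); the second is trivial; the third uses the same relation in the position slot, $\pt{\ssout_i}(t,\xi)+\lambda_i(t,\xi)\px{\ssout_i}(t,\xi)=0$, together with the fact that $\ssout_i(t,\xi)$ does not depend on $x$.

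Second, I would establish a uniform two-sided bound $0<c_0\leq\px{\ssout_i}(t,\xi)\leq C_0$ on all of $\R\times[0,1]$ (every bound below refers to the extension of Remark~\ref{rem ext Lambda}, with $\epsilon$ possibly halved). Since $i\leq m$, differentiating the defining identity $\chi_i(\ssout_i(t,\xi);t,\xi)=0$ from \eqref{def sin sout} with respect to the position variable yields $\px{\ssout_i}(t,\xi)=\px{\chi_i}(\ssout_i(t,\xi);t,\xi)\big/\big(-\lambda_i(\ssout_i(t,\xi),0)\big)$; by \eqref{deriv chi} the numerator equals $e^{\int_t^{\ssout_i(t,\xi)}\px{\lambda_i}(\theta,\chi_i(\theta;t,\xi))\,d\theta}\in[e^{-L/\epsilon},e^{L/\epsilon}]$, where $L=\norm{\px{\Lambda}}_{L^{\infty}}$ and one uses $0\leq\ssout_i(t,\xi)-t<1/\epsilon$ from \eqref{bounds sin sout}, while the denominator lies in $[\epsilon,\Lambda_{\max}]$ by \eqref{hyp speeds}, with $\Lambda_{\max}=\max_k\norm{\lambda_k}_{L^{\infty}}$. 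Only the lower bound $c_0>0$ will be essential, and only when $i\geq2$.

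Finally, I would set, for constants $A,B>0$ to be fixed,
$$\Omega_i(t,x,\xi):=A\big(\ssout_i(t,x)-\ssout_i(t,\xi)\big)+B\big(\ssout_i(t,x)-t\big).$$
It is $C^1$ on $\clos{\Taub}$ by \eqref{reg sin sout}; it is nonnegative since $\ssout_i(t,\cdot)$ is nondecreasing (so $\ssout_i(t,x)\geq\ssout_i(t,\xi)$ when $\xi\leq x$, by \eqref{increases}) and $\ssout_i(t,x)\geq t$ by \eqref{tentredeux}; and it is bounded on $\Taub$ since $\ssout_i(t,x)-\ssout_i(t,\xi)$ and $\ssout_i(t,x)-t$ both lie in $[0,1/\epsilon)$, by \eqref{tentredeux} and \eqref{bounds sin sout}. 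Combining the three identities of the first step gives, on $\clos{\Taub}$,
$$\mathcal{L}_j\Omega_i=-A\big(\lambda_j(t,\xi)-\lambda_i(t,\xi)\big)\,\px{\ssout_i}(t,\xi)-B.$$
For $j\in\ens{i,\ldots,n}$ one has $\lambda_j(t,\xi)-\lambda_i(t,\xi)\geq0$ (it is $0$ for $j=i$ and, by \eqref{hyp speeds}--\eqref{hyp speeds bis}, $\geq\epsilon$ for $j>i$) and $\px{\ssout_i}(t,\xi)>0$, hence $\mathcal{L}_j\Omega_i\leq-B$. For $j\in\ens{1,\ldots,i-1}$ one has $\lambda_j(t,\xi)-\lambda_i(t,\xi)\leq-\epsilon$ by \eqref{hyp speeds}--\eqref{hyp speeds bis}, hence $\mathcal{L}_j\Omega_i\geq A\epsilon c_0-B$. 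Choosing $B=1$ and $A=2/(\epsilon c_0)$ then gives \eqref{Omega j<i} and \eqref{Omega j>i} with $\epsilon_0=1$.

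The main obstacle is precisely the uniform positive lower bound $c_0$ on $\px{\ssout_i}$ used in the $j<i$ case: the bare positivity from \eqref{increases} does not suffice because $\Taub$ is unbounded in time, so one must use the explicit formula for $\px{\ssout_i}$ together with the uniform-in-time bounds \eqref{bounds sin sout} and the $L^\infty$ assumptions in \eqref{reg parameters}. Everything else is routine verification.
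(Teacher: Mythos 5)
Your proof is correct, and it follows a genuinely different construction from the paper's. The paper sets $\Omega_i(t,x,\xi)=\omega^1_i(t,x)-\omega^\nu_i(t,\xi)$, where $\omega^\nu_i$ is the exit time of the flow with $\nu$-sped-up speed $\lambda_i/\nu$, and must then (i) solve a one-parameter family of linear hyperbolic PDEs, (ii) prove uniform $\nu$-independent bounds on $\partial_t\omega^\nu_i$, $\partial_x\omega^\nu_i$ and $\partial_\nu\omega^\nu_i$, and (iii) tune $\nu\in(r,1)$ so that the factor $1-\nu\lambda_j/\lambda_i$ changes sign at $j=i$. You instead use the affine combination $\Omega_i=A\bigl(\ssout_i(t,x)-\ssout_i(t,\xi)\bigr)+B\bigl(\ssout_i(t,x)-t\bigr)$ built only from the actual exit time, so the transport identities factor cleanly to $\mathcal{L}_j\Omega_i=-A\bigl(\lambda_j(t,\xi)-\lambda_i(t,\xi)\bigr)\partial_x\ssout_i(t,\xi)-B$, and the choice of $A,B$ is immediate from the uniform lower bound $\partial_x\ssout_i\geq c_0>0$ (a special case of the paper's step (ii) at $\nu=1$). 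In the time-independent case your $\Omega_i$ reduces, up to a positive scalar, to the paper's $\phi_i(x)-\nu\phi_i(\xi)$ with $\nu=A/(A+B)$, so the two ideas agree in the autonomous limit; the time-dependent generalizations then diverge, since $\omega^\nu_i(t,\xi)-t$ is not a linear reparametrization of $\ssout_i(t,\xi)-t$ in general, but both give valid weights. Your version is shorter and avoids the auxiliary family of characteristics $\chi^\nu_i$ entirely; the paper's version ties more directly to the formula already known in the literature for time-independent systems. One small point to keep explicit if this were written out: \eqref{hyp speeds bis} gives $\lambda_{k+1}-\lambda_k>\epsilon$ consecutively, so iterating yields $\lambda_i-\lambda_j>\epsilon$ for $j<i$ (in fact $>(i-j)\epsilon$), which is what you use.
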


The proof of Lemma \ref{key lemma} is postponed to Appendix \ref{app lemma} for the sake of the presentation.

\begin{remark}\label{rem key lemma}
In the time-independent case, we can take $\Omega_i(x,\xi)=\phi_i(x)-\nu\phi_i(\xi)$ (we recall that $\phi_i(x)=\int_0^x \frac{1}{-\lambda_i(y)} \, dy$) where $\nu \in [0,1)$ is any number such that $\nu>\max_{1 \leq j<i \leq m} \max_{\xi \in [0,1]} \lambda_i(\xi)/\lambda_j(\xi)$.
This function appeared for instance in \cite[Lemma 6.2]{HDMVK16} for systems with constant coefficients and in \cite[(A.32)]{HVDMK19} for systems with time-independent coefficients (see also \cite[Lemma A.4]{CVKB13} for $2 \times 2$ systems, where it is enough to take $\nu=0$ since \eqref{Omega j<i} becomes void).
\end{remark}

\begin{remark}\label{Omegai decreases}
Observe that it follows from the estimate \eqref{Omega j>i} that, for every $j \in \ens{i,\ldots,n}$,
$$s \mapsto \Omega_i\left(\chi_{ij}\left(s;t,x,\xi\right)\right) \text{ is strictly decreasing.}$$
This is the analogue to \cite[Remark 10]{HDMVK16}.
\end{remark}

We can now prove Proposition \ref{contraction estimates}:

\begin{proof}[Proof of Proposition \ref{contraction estimates}]
Let us denote by
$$R=\max \ens{\norm{\widetilde{M}^1}_{L^{\infty}(\R\times(0,1))^{n \times n}}, \norm{\widetilde{Q}^1}_{L^{\infty}(\R)^{p \times m}}}.$$

\begin{enumerate}[1)]
\item
We start with the estimate of $\norm{\Phi_1(K)-\Phi_1(H)}_{L^{\infty}(\Taub)^{m \times n}}$.
Set
$$C_1=\frac{n}{\epsilon_0}R.$$
Let $j \in \ens{1,\ldots,i-1}$.
From the definition \eqref{def Phiz} of $\Phi_1$ we see that
$$
\abs{\left(\Phi_1(K)-\Phi_1(H)\right)_{ij}(t,x,\xi)}
\leq
nR
\left(\int_{\sinterin_{ij}(t,x,\xi)}^{t} 1 \, ds\right)
\norm{K-H}_{L^{\infty}(\Taub)^{m \times n}}.
$$
Thanks to the estimate \eqref{Omega j<i} we can perform the change of variable $s \mapsto \theta(s)=\Omega_i(\chi_{ij}(s;t,x,\xi))$ and obtain
$$
\epsilon_0 \left(\int_{\sinterin_{ij}(t,x,\xi)}^{t} 1 \, ds\right)
\leq
\int_{\sinterin_{ij}(t,x,\xi)}^{t}\frac{d\theta}{ds}(s) \, ds
=\theta(t)-\theta(\sinterin_{ij}(t,x,\xi))
\leq \theta(t)
=\Omega_i(t,x,\xi).
$$

This gives the estimate
$$\abs{\left(\Phi_1(K)-\Phi_1(H)\right)_{ij}(t,x,\xi)}
\leq C_1 \Omega_i(t,x,\xi)\norm{K-H}_{L^{\infty}(\Taub)^{m \times n}}.$$
It is important to point out that the right-hand side does not depend on the second index $j$.
Computing $\Phi_1^2(H)-\Phi_1^2(K)=\Phi_1(\Phi_1(H))-\Phi_1(\Phi_1(K))$ and using the previous estimate, we obtain
\begin{multline*}
\abs{\left(\Phi_1^2(K)-\Phi_1^2(H)\right)_{ij}(t,x,\xi)}
\\
\leq
nR C_1
\left(\int_{\sinterin_{ij}(t,x,\xi)}^{t} \Omega_i(\chi_{ij}(s;t,x,\xi)) \, ds\right)
\norm{K-H}_{L^{\infty}(\Taub)^{m \times n}}.\\
\end{multline*}
Using again the change of variable $s \mapsto \theta(s)$ and \eqref{Omega j<i}, we obtain
\begin{multline*}
\epsilon_0 \left(\int_{\sinterin_{ij}(t,x,\xi)}^{t} \Omega_i(\chi_{ij}(s;t,x,\xi)) \, ds\right)
=\epsilon_0 \int_{\sinterin_{ij}(t,x,\xi)}^{t} \theta(s) \, ds
\leq
\int_{\sinterin_{ij}(t,x,\xi)}^{t} \theta(s) \frac{d\theta}{ds}(s) \, ds
\\
=\frac{\theta(t)^2}{2}-\frac{\theta(\sinterin_{ij}(t,x,\xi))^2}{2}
\leq \frac{\theta(t)^2}{2}
=\frac{\Omega_i(t,x,\xi)^2}{2}.
\end{multline*}
This gives the estimate
$$
\abs{\left(\Phi_1^2(K)-\Phi_1^2(H)\right)_{ij}(t,x,\xi)}
\leq
\frac{C_1^2\Omega_i(t,x,\xi)^2}{2}
\norm{K-H}_{L^{\infty}(\Taub)^{m \times n}}.
$$

By induction, we easily obtain that, for every $N \in \N^*$,
\begin{equation}\label{estim Phiz j<i}
\abs{\left(\Phi_1^N(K)-\Phi_1^N(H)\right)_{ij}(t,x,\xi)}
\leq
 \frac{C_1^N \Omega_i(t,x,\xi)^N}{N!}
\norm{K-H}_{L^{\infty}(\Taub)^{m \times n}}.
\end{equation}

Using the estimate \eqref{Omega j>i} instead of \eqref{Omega j<i}, we can obtain exactly the same estimate as \eqref{estim Phiz j<i} for $j \in \ens{i,\ldots,n}$.
Since $\Omega_i$ is bounded, it follows that
$$
\norm{\Phi_1^N(K)-\Phi_1^N(H)}_{L^{\infty}(\Taub)^{m \times n}}
\leq
\frac{C^N}{N!}
\norm{K-H}_{L^{\infty}(\Taub)^{m \times n}},
$$
for some $C$ independent of $N$ and $K,H$.

\item
Let us now take care of $\Phi_2(K)-\Phi_2(H)$.
The idea to estimate this term is essentially the same as before, with the extra use of the decreasing property stated in Remark \ref{Omegai decreases}.
Set
$$C_2=\frac{n}{\epsilon_0}R^2 p.$$
From the definition \eqref{def Psiz} of $\Phi_2$ we see that, for $j \in \ens{i,\ldots,m}$ and $(t,x,\xi) \in \clos{\SSoutm_{ij}}$,
\begin{multline*}
\abs{\left(\Phi_2(K)-\Phi_2(H)\right)_{ij}(t,x,\xi)}
\\
\leq
n R^2 \sum_{\ell=1}^p
\left(\int_{\sinterout_{ij}(t,x,\xi)}^{\sinterout_{i, m+\ell}\left(\sinterout_{ij}(t,x,\xi),\chi_i\left(\sinterout_{ij}(t,x,\xi);t,x\right), 0\right)} 1 \, ds\right)
\norm{K-H}_{L^{\infty}(\Taub)^{m \times n}}.
\end{multline*}
Thanks to the estimate \eqref{Omega j>i} we can perform again the change of variable
$$s \mapsto \theta(s)=\Omega_i(\chi_{ij}(s;t,x,\xi)),$$
which is decreasing since $j \geq i$ (see Remark \ref{Omegai decreases}), and obtain
\begin{multline*}
\epsilon_0 \left(\int_{\sinterout_{ij}(t,x,\xi)}^{\sinterout_{i, m+\ell}\left(\sinterout_{ij}(t,x,\xi),\chi_i\left(\sinterout_{ij}(t,x,\xi);t,x\right), 0\right)} 1 \, ds\right)
\\
\leq
\int_{\sinterout_{ij}(t,x,\xi)}^{\sinterout_{i, m+\ell}\left(\sinterout_{ij}(t,x,\xi),\chi_i\left(\sinterout_{ij}(t,x,\xi);t,x\right), 0\right)} -\frac{d\theta}{ds}(s) \, ds
\\
=-\theta\left(\sinterout_{i, m+\ell}\left(\sinterout_{ij}(t,x,\xi),\chi_i\left(\sinterout_{ij}(t,x,\xi);t,x\right), 0\right)\right)+\theta\left(\sinterout_{ij}(t,x,\xi)\right)
\leq \theta(t)
=\Omega_i(t,x,\xi).
\end{multline*}

This gives the estimate$$\abs{\left(\Phi_2(K)-\Phi_2(H)\right)_{ij}(t,x,\xi)}
\leq C_2 \Omega_i(t,x,\xi)\norm{K-H}_{L^{\infty}(\Taub)^{m \times n}}.$$
Note that this estimate is also valid if $j \not\in \ens{i,\ldots,m}$ or $(t,x,\xi) \not\in \clos{\SSoutm_{ij}}$ since $(\Phi_2(\cdot))_{ij}=0$ in this case.
Reasoning by induction as before, it is now not difficult to obtain the estimate
$$
\norm{\Phi_2^N(K)-\Phi_2^N(H)}_{L^{\infty}(\Taub)^{m \times n}}
\leq
\frac{C^N}{N!}
\norm{K-H}_{L^{\infty}(\Taub)^{m \times n}},
$$
for some $C$ independent of $N$ and $K,H$.

\end{enumerate}
\end{proof}

\subsection{On the time-periodicity of $F$}
\label{sec.per}
In this section we assume that, for some $\tau>0$,
$\Lambda$, $M$ and $Q$ are $\tau$-periodic with respect to time and show that the above construction of $F$ leads, with minor modifications, to a $F$ which is also $\tau$-periodic with respect to time.

First of all, concerning the extension of $\Lambda$ to a function of $\R^2$ (and of $M$ and $Q$ whenever needed), it is clear that one can extend $\Lambda$ to $\mathbb{R}\times [0,1]$ by just requiring the
$\tau$-periodicity with respect to time of this extension.
This extension is still denoted by $\Lambda$.
Then one extends $\Lambda$ to $\R^2$ so that this extension, still denoted by $\Lambda$,  is $\tau$-periodic with respect to time and so that the properties \eqref{Lambda diag}, \eqref{hyp speeds}, \eqref{hyp speeds bis} and \eqref{reg parameters} remain valid on $\R^2$ (see e.g. Remark \ref{rem ext Lambda}).

From the construction of $F$ (see \eqref{def F}, \eqref{def F1} and \eqref{def F2}) it is clear that $F$ is $\tau$-periodic with respect to time if so are all the matrix-valued functions involved in the several transformations of this article.
Now, in order to obtain the $\tau$-periodicity of these transformations, the minor modifications/comments are essentially the following ones.

\begin{enumerate}[1)]
\item
Concerning the diagonal transformation to remove the diagonal terms in $M$
(see Section~\ref{sect remov diag}), one simply observes that the function \eqref{sol equ phi_i} is $\tau$-periodic with respect to time if so is $m_{ii}$, thanks to the properties
\begin{equation}\label{prop tauper}
\chi_i(s+\tau;t+\tau,\xi)=\chi_i(s;t,\xi),
\qquad
\ssin_i(t+\tau,\xi)=\ssin_i(t,\xi)+\tau.
\end{equation}

\item
Concerning the kernel $H$ of the Fredholm transformation (see Section~\ref{sect exist kern T3}) one easily checks that it is indeed $\tau$-periodic with respect to time.
This follows from the uniqueness of the solution to \eqref{valuehij} and similar properties to \eqref{prop tauper}.

\item
Concerning the kernel $K$ of the Volterra transformation of the second kind (see Section~\ref{sect exist kern T2}), to construct it in such a way that it is $\tau$-periodic with respect to time, it suffices to observe that $\widetilde{m}_{\ell j}^1$, $r_{ij}$ and $q^1_{\ell j}$ become $\tau$-periodic with respect to time once $M$ and $Q$ are, and to modify the definition of the space $B$ given in \eqref{defB} by adding the condition that the $k_{ij}$, $1 \leq i \leq m$, $1 \leq j \leq n$, are $\tau$-periodic with respect to time (alternatively, one can keep \eqref{defB} and deduce from the uniqueness of the fixed point of $\Phi$ that it has to be $\tau$-periodic with respect to time).

\end{enumerate}

\section*{Acknowledgements}
All the authors would like to thank ETH Z\"{u}rich Institute for Theoretical Studies (ETH-ITS) and Institute for Mathematical Research (ETH-FIM) for their hospitalities. This work was initiated while they were visiting  there.
The second and third author would also thank Tongji University for the kind invitation.
Part of this work was also carried out there.
This project was partially supported by ANR Finite4SoS ANR-15-CE23-0007, the Natural Science Foundation of China (Nos. 11601284 and 11771336) and the Young Scholars Program of Shandong University (No. 2016WLJH52).

\appendix
\section{Background on broad solutions}\label{sect broad sol}

We recall that all the systems of this paper have the following form:
\begin{equation}\label{system general bis}
\left\{\begin{array}{l}
\ds \pt{y}(t,x)+\Lambda(t,x) \px{y}(t,x)=M(t,x) y(t,x)+G(t,x)y(t,0), \\
y_-(t,1)=\ds \int_0^1 F(t,\xi) y(t,\xi) \, d\xi, \quad y_+(t,0)=Q(t)y_-(t,0),  \\
y(t^0,x)=y^0(x),
\end{array}\right.
\end{equation}
where $M$ and $Q$ have at least the regularity \eqref{reg parameters}, $F \in L^\infty((0,+\infty) \times (0,1))^{m\times n}$ and
$$G \in C^0([0,+\infty)\times[0,1])^{n \times n} \cap L^{\infty}((0,+\infty)\times(0,1))^{n \times n}.$$

\subsection{Definition of broad solution}

Let us now introduce the notion of solution for such systems.
To this end, we have to restrict our discussion to the domain where the system \eqref{system general bis} evolves, i.e. on $(t^0,+\infty)\times(0,1)$.
For every $(t,x) \in (t^0,+\infty)\times(0,1)$, we have
$$(s,\chi_i(s;t,x)) \in (t^0,+\infty)\times(0,1), \quad \forall s \in (\ssinb_i(t^0;t,x),\ssout_i(t,x)),$$
where we introduced
$$\ssinb_i(t^0;t,x)=\max\ens{t^0,\ssin_i(t,x)}<t.$$

\noindent
Formally, writing the $i$-th equation of the system \eqref{system general bis} along the characteristic $\chi_i(s;t,x)$ for $s \in [\ssinb_i(t^0;t,x),\ssout_i(t,x)]$, and using the chain rules yields the ODE
\begin{equation}\label{sol along char}
\left\{\begin{array}{l}
\ds \dds y_i\left(s,\chi_i(s;t,x)\right)=
\sum_{j=1}^n m_{ij}\left(s,\chi_i(s;t,x)\right)y_j\left(s,\chi_i(s;t,x)\right)
+\sum_{j=1}^n g_{ij}\left(s,\chi_i(s;t,x)\right)y_j\left(s,0\right), \\
\ds y_i\left(\ssinb_i(t^0;t,x),\chi_i(\ssinb_i(t^0;t,x);t,x)\right)=b_i(y)(t,x),
 \end{array}\right.
\end{equation}
where the initial condition $b_i(y)(t,x)$ is given by the appropriate boundary or initial conditions of the system \eqref{system general bis}:
\begin{multline}\label{def bi}
b_i(y)(t,x)=
\left\{\begin{array}{ll}
\ds \sum_{j=1}^n \int_0^1 f_{ij}(\ssin_i(t,x),\xi)y_j(\ssin_i(t,x),\xi) \, d\xi & \mbox{ if } \ssin_i(t,x)>t^0 \mbox{ and } i \in \ens{1,\ldots,m}, \\
\ds \sum_{j=1}^m q_{i-m,j}(\ssin_i(t,x))y_j(\ssin_i(t,x),0)& \mbox{ if } \ssin_i(t,x)>t^0 \mbox{ and } i \in \ens{m+1,\ldots,n}, \\
\ds y^0_i(\chi_i(t^0;t,x)) & \mbox{ if } \ssin_i(t,x)<t^0.
\end{array}\right.
\end{multline}
Integrating the ODE \eqref{sol along char} over $s \in [\ssinb_i(t^0;t,x),t]$, we obtain the following system of integral equations:
\begin{multline}\label{equdef sol broad}
y_i(t,x)=b_i(y)(t,x)+\sum_{j=1}^n \int_{\ssinb_i(t^0;t,x)}^{t} m_{ij}(s,\chi_i(s;t,x))y_j(s,\chi_i(s;t,x)) \, ds
\\
+\sum_{j=1}^n \int_{\ssinb_i(t^0;t,x)}^t g_{ij}\left(s,\chi_i(s;t,x)\right)y_j\left(s,0\right) \, ds.
\end{multline}
This leads to the following notion of ``solution along the characteristics'' or ``broad solution'':

\begin{definition}\label{def broad sol}
Let $t^0 \geq 0$ and $y^0 \in L^2(0,1)^n$ be fixed.
We say that a function $y:(t^0,+\infty)\times(0,1) \longrightarrow \R^n$ is a broad solution to the system \eqref{system general bis} if
\begin{equation}\label{reg broad sol}
y \in C^0([t^0,t^0+T];L^2(0,1)^n)
\cap C^0([0,1];L^2(t^0,t^0+T)^n), \quad \forall T>0,
\end{equation}
and if the integral equation \eqref{equdef sol broad} is satisfied for every $i \in \ens{1,\ldots,n}$, for a.e. $t>t^0$ and a.e. $x \in (0,1)$.
\end{definition}

\subsection{Well-posedness}

This section is devoted to the following well-posedness result  regarding system \eqref{system general bis}:

\begin{theorem}\label{thm wp}
For every $t^0 \geq 0$ and $y^0 \in L^2(0,1)^n$, there exists a unique broad solution to \eqref{system general bis}.
Moreover, there exists $C>0$ such that, for every $T>0$, $t^0 \geq 0$ and $y^0 \in L^2(0,1)^n$, the corresponding broad solution $y$ satisfies
\begin{equation}\label{dep cont}
\norm{y}_{C^0([t^0,t^0+T];L^2(0,1)^n)}
+\norm{y}_{C^0([0,1];L^2(t^0,t^0+T)^n)}
\leq Ce^{CT} \norm{y^0}_{L^2(0,1)^n}.
\end{equation}
\end{theorem}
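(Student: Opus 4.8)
The goal is to establish existence, uniqueness and the a priori estimate \eqref{dep cont} for the broad solution of \eqref{system general bis}. The plan is to set up a fixed-point scheme directly on the system of integral equations \eqref{equdef sol broad}, working on a bounded time window $[t^0,t^0+T]$ and then extending to all times by patching. First I would fix $T>0$ and introduce the Banach space $X_T=C^0([t^0,t^0+T];L^2(0,1)^n)\cap C^0([0,1];L^2(t^0,t^0+T)^n)$ equipped with a norm of the form $\norm{y}_{X_T}=\sup_{t\in[t^0,t^0+T]}e^{-\rho(t-t^0)}\norm{y(t,\cdot)}_{L^2(0,1)^n}+\sup_{x\in[0,1]}e^{-\rho'(x)}\norm{y(\cdot,x)}_{L^2(t^0,t^0+T)^n}$ for suitable weights $\rho,\rho'$ to be chosen large, which turn the map defined by the right-hand side of \eqref{equdef sol broad} into a contraction. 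Denote this map by $\mathcal{N}$, so that a broad solution is exactly a fixed point of $\mathcal{N}$.

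The key steps, in order, would be: (1) \textbf{$\mathcal{N}$ is well defined on $X_T$}, i.e. if $y\in X_T$ then $\mathcal{N}(y)$ again has the regularity \eqref{reg broad sol}. The volume terms $\int m_{ij}(s,\chi_i(s;t,x))y_j(s,\chi_i(s;t,x))\,ds$ and $\int g_{ij}(s,\chi_i(s;t,x))y_j(s,0)\,ds$ are handled using the change of variables along the characteristics (Jacobian bounded above and below by \eqref{deriv chi}, \eqref{chi increases} and the bounds \eqref{bounds sin sout}), together with the continuity of $\chi_i$, $\ssin_i$, $\ssout_i$ from \eqref{reg xi}, \eqref{reg sin sout}; the boundary/initial term $b_i(y)$ is controlled via the trace estimates contained in the $C^0([0,1];L^2)$ component of the norm for the feedback part at $\ssin_i$, via $Q\in L^\infty$ for the reflection part, and via $y^0\in L^2$ for the initial-data part. (2) \textbf{$\mathcal{N}$ is a contraction} for $\rho,\rho'$ large: subtracting two images, the difference of the volume terms is bounded by $C\int_{\ssinb_i(t^0;t,x)}^t \norm{(y-\tilde y)(s,\cdot)}\,\cdot(\dots)$ which after introducing the exponential weight $e^{-\rho(t-t^0)}$ produces a factor $\le C/\rho$; similarly the trace terms produce a factor controlled by the weight $\rho'$ in the $x$-variable. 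Here one uses crucially that the feedback term $\int_0^1 F(t,\xi)y(t,\xi)\,d\xi$ appearing in $b_i$ is evaluated at the \emph{entry} time $\ssin_i(t,x)<t$, which is what makes the scheme causal. Choosing $\rho$ large enough gives contraction with ratio $<1$, hence a unique fixed point $y\in X_T$ by the Banach fixed-point theorem. (3) \textbf{Globalization:} since the contraction constant depends only on the $L^\infty$ bounds of the data and not on $t^0$, and since $T>0$ was arbitrary, uniqueness on each window forces the solutions on overlapping windows to coincide, yielding a unique global broad solution $y\in C^0([t^0,+\infty);L^2(0,1)^n)\cap C^0([0,1];L^2_{\loc})$.

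For the estimate \eqref{dep cont}, I would take the fixed point $y=\mathcal{N}(y)$ and run a Grönwall-type argument directly on \eqref{equdef sol broad}: setting $M(t)=\norm{y(t,\cdot)}^2_{L^2(0,1)^n}$ and $N(x)=\norm{y(\cdot,x)}^2_{L^2(t^0,t^0+T)^n}$, the integral equation together with the change of variables along characteristics gives a coupled system of integral inequalities of the form $M(t)\le C\norm{y^0}^2+C\int_{t^0}^t M(s)\,ds+C N(\cdot)|_{\text{boundary}}$ and a symmetric one for $N$; closing this loop (for instance by first bounding $\sup_{[t^0,t^0+T]}M$ in terms of $\norm{y^0}^2$ and $\sup_x N(x)$, then feeding that back) and applying Grönwall's lemma yields the bound $Ce^{CT}\norm{y^0}^2_{L^2}$ with $C$ depending only on $\norm{M}_{L^\infty},\norm{G}_{L^\infty},\norm{F}_{L^\infty},\norm{Q}_{L^\infty}$ and $\epsilon$.

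\textbf{Main obstacle.} The delicate point is step (1)–(2): showing that $\mathcal{N}$ maps $X_T$ into itself and is a contraction \emph{in this particular two-component norm}, because the two pieces of the norm ($L^\infty_t L^2_x$ and $L^\infty_x L^2_t$) feed into each other through the boundary terms — the feedback term $\int_0^1 F y\,d\xi$ and the reflection $Q y_-(t,0)$ require the $L^\infty_x L^2_t$ trace control, while the volume terms naturally live in $L^\infty_t L^2_x$. One must carefully track, using the monotonicity and regularity properties \eqref{increases}, \eqref{reg sin sout} of $\ssin_i,\ssout_i$ and the change-of-variables Jacobians from \eqref{deriv chi}, that each term lands in the right space with a constant that is uniform in $t^0$, and that the exponential weights can be chosen to beat simultaneously both couplings. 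Everything else (globalization, Grönwall) is routine once this is in place.
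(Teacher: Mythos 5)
Your overall strategy coincides with the paper's: both set up a Banach fixed point for the map defined by the right-hand side of \eqref{equdef sol broad} on the space $C^0([t^0,t^0+T];L^2(0,1)^n)\cap C^0([0,1];L^2(t^0,t^0+T)^n)$, equipped with exponential weights in $t$ and $x$, and both deduce \eqref{dep cont} from the contraction property (the paper gets it directly from $\tfrac12\norm{y}_B\le\norm{\mathcal F(0)}_B$, so your extra Gr\"onwall loop is harmless but not needed).

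There is, however, one genuine gap in step (2): the claim that ``choosing $\rho$ large enough gives contraction'' fails for the reflection term $y_+(t,0)=Q(t)y_-(t,0)$. When you estimate the $L^\infty_x L^2_t$ component of $b_i$ for $i\in\{m+1,\ldots,n\}$ at a fixed $x$, the change of variables $\sigma=\ssin_i(t,x)$ sends the trace at $x$ to the trace at $\xi=0$, and both the temporal weight ratio $e^{-L_1(t-\ssin_i(t,x))}$ and the spatial weight ratio are merely $\le 1$ (with equality at $x=0$); what remains is a constant of order $\norm{Q}_{L^\infty}^2\,\frac{R_1}{\epsilon}e^{R_1/\epsilon}$ (Jacobian of $t\mapsto\ssin_i(t,x)$), which is \emph{independent of the weights} and can exceed $1$. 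So the two components of the norm feed into each other through $x=0$ with a coupling constant that no choice of $\rho,\rho'$ can beat. The paper resolves this by a preliminary diagonal rescaling $y=D\tilde y$ with $D=\diag\bigl(\frac{\alpha}{\norm{Q}_{L^\infty}+\alpha}\Id_{\R^m},\Id_{\R^p}\bigr)$, which reduces to the case $\norm{Q}_{L^\infty}\le\alpha$ with $\alpha$ small depending only on $n$, $m$, $\norm{\Lambda}_{W^{1,\infty}}$ and $\epsilon$ (crucially not on $M$, $G$, $F$); an alternative fix would be to use weights in $x$ that differ between the $-$ and $+$ components so that the reflection at $x=0$ gains a factor $e^{-L_2}$. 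Without one of these devices your fixed-point scheme does not close.
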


\begin{remark}\label{rem FTGA implies US}
It follows from the uniformity of the constant $Ce^{CT}$ with respect to the initial time $t^0$ in the estimate \eqref{dep cont} that, for systems of the form \eqref{system general bis}, the uniform stability property \eqref{FT cond} is a consequence of the finite-time global attractor property \eqref{attracT} (simply take $\delta>0$ such that $Ce^{CT}\delta \leq \epsilon$).
\end{remark}

Let us first point out that this well-posedness result for our initial system \eqref{syst init} for the particular $F$ that we have constructed in Section \ref{sect syst transfo} follows in fact from the well-posedness result for the final target system of Proposition \ref{prop transfo 3} (easier to establish), since we have shown that both systems are equivalent by means of several invertible transformations.
However, it is still important to have such a well-posedness result for any $F$ within the class studied, which is a result that also has its own interest.
We will provide a complete proof since, to the best of our knowledge, there are no references that show the well-posedness for the initial-boundary value problem \eqref{system general bis} with non-local terms $G(t,x)y(t,0)$, with weak regularity \eqref{reg broad sol} and with uniform estimate \eqref{dep cont}.

\begin{proof}[Proof of Theorem \ref{thm wp}]

We first remark that it is enough to prove the theorem for $\norm{Q}_{L^{\infty}}$ small enough, say
\begin{equation}\label{cond on Q}
\norm{Q}_{L^{\infty}} \leq \alpha,
\end{equation}
where $\alpha>0$ does not depend on $T,t^0,y^0$ nor on $M,G,F$.
This follows from the following change of variable:
$$y=D\tilde{y},
\qquad D=\begin{pmatrix} \frac{\alpha}{\norm{Q}_{L^\infty}+\alpha} \Id_{\R^m} & 0 \\ 0 & \Id_{\R^p} \end{pmatrix},
$$
where $\tilde{y}$ is the solution to the system $(\tilde{M},\tilde{G},\tilde{F},\tilde{Q})$ with
$$
\tilde{M}=D^{-1}MD,
\quad
\tilde{G}=D^{-1}GD,
\quad
\tilde{F}=\left(\frac{\alpha}{\norm{Q}_{L^\infty}+\alpha}\right)^{-1}FD,
\quad
\tilde{Q}=\frac{\alpha}{\norm{Q}_{L^\infty}+\alpha} Q.
$$

Let us now show how to prove the theorem under the smallness condition \eqref{cond on Q} with the Banach fixed point theorem ($\alpha>0$ will be fixed adequately below).
Let $T>0$, $t^0 \geq 0$ and $y^0 \in L^2(0,1)^n$ be fixed for the remainder of the proof.
It is clear that a function $y:(t^0,+\infty)\times(0,1) \longrightarrow \R^n$ satisfies the integral equation \eqref{equdef sol broad} if, and only if, it is a fixed point of the map $\mathcal{F}:B \longrightarrow B$, where
$$B=C^0([t^0,t^0+T];L^2(0,1)^n) \cap C^0([0,1];L^2(t^0,t^0+T)^n),$$
and $(\mathcal{F}(y))_i(t,x)$ is given by the expression on the right-hand side of \eqref{equdef sol broad}.
It can be checked that $\mathcal{F}$ indeed maps $B$ into itself (actually, by computations similar to the upcoming ones).
Let us now make $B$ a Banach space by equipping it with the following weighted norm:
$$\norm{y}_B=\norm{y}_{B_1}+\norm{y}_{B_2},$$
where
$$
\norm{y}_{B_1}=\max_{t \in [t^0,t^0+T]} e^{-\frac{L_1}{2}(t-t^0)} \sqrt{\int_0^1 \sum_{i=1}^n \abs{y_i(t,x)}^2 e^{-L_2 x} \, dx},
$$
and
$$
\norm{y}_{B_2}=\max_{x \in [0,1]} e^{\frac{L_2}{2}(1-x)} \sqrt{\int_{t^0}^{t^0+T} \sum_{i=1}^n \abs{y_i(t,x)}^2 e^{-L_1(t-t^0)} \, dt},
$$
where $L_1,L_2>0$ are constants independent of $T,t^0$ and $y^0$ that will be fixed below.
Our goal is to show that, for $L_1,L_2>0$ large enough,
\begin{equation}\label{F contraction}
\norm{\mathcal{F}(y^1)-\mathcal{F}(y^2)}_B
\leq \frac{1}{2} \norm{y^1-y^2}_B, \quad \forall y^1,y^2 \in B.
\end{equation}
It is then not difficult to check that the fixed point of $\mathcal{F}$ satisfies the estimate \eqref{dep cont}.
Indeed, using \eqref{F contraction}, we easily see that the fixed point $y$ of $\mathcal{F}$ will satisfy
$$\frac{1}{2}\norm{y}_B \leq \norm{\mathcal{F}(0)}_B,$$
and some straightforward computations show that
$$
\begin{array}{c}
\ds
\norm{y}_{C^0([t^0,t^0+T];L^2(0,1)^n)}^2
\leq e^{L_2} e^{L_1 T} \norm{y}_{B_1}^2,
\qquad
\norm{y}_{C^0([0,1];L^2(t^0,t^0+T)^n)}^2
\leq e^{L_1 T} \norm{y}_{B_2}^2,
\\
\ds
\norm{\mathcal{F}(0)}_B^2 \leq 2\left(1+\frac{e^{L_2}}{\epsilon}\right)e^{\norm{\px{\Lambda}}_{L^\infty} T}\norm{y^0}_{L^2(0,1)^n}^2.
\end{array}
$$

\noindent
Let us now establish \eqref{F contraction}.
We introduce
$$y=y^1-y^2,$$
so that $\mathcal{F}(y^1)-\mathcal{F}(y^2)$ is equal to the right-hand side of \eqref{equdef sol broad} with $y^0=0$.
We have to estimate four types of terms in each $\norm{\cdot}_{B_i}$-norm ($i=1,2$).
For convenience, we denote by
$$
R_1=\max \ens{
\norm{\Lambda}_{L^{\infty}},
\norm{\px{\Lambda}}_{L^{\infty}}
},
\quad
R_2=\max \ens{
\norm{M}_{L^{\infty}},
\norm{G}_{L^{\infty}},
\norm{F}_{L^{\infty}}
}.
$$
We recall that it is crucial that $\alpha$ does not depend on $R_2$.

\paragraph{Estimate of the $\norm{\cdot}_{B_1}$-norm.}

Let $t \in [t^0,t^0+T]$ be fixed.
Let $I=\ens{x \in (0,1), \quad \ssin_i(t,x)>t^0}$.

\begin{enumerate}[1)]
\item
Let $i \in \ens{1,\ldots,m}$.
For a.e. $x \in I$, using Cauchy-Schwarz inequality, we have
$$
\abs{\sum_{j=1}^n \int_0^1 f_{ij}(\ssin_i(t,x),\xi)y_j(\ssin_i(t,x),\xi) \, d\xi}^2
\leq n R_2^2 e^{L_2} \norm{y}_{B_1}^2 e^{L_1(\ssin_i(t,x)-t^0)}.
$$

Using a finer version of \eqref{tentredeux}, namely,
\begin{equation}\label{tentredeux finer}
\frac{1-x}{R_1} \leq t-\ssin_i(t,x),
\end{equation}
(obtained similarly to \eqref{bounds sin sout}) we obtain the estimate
$$
\int_{I} \abs{\sum_{j=1}^n \int_0^1 f_{ij}(\ssin_i(t,x),\xi)y_j(\ssin_i(t,x),\xi) \, d\xi}^2 e^{-L_2 x} \, dx
\leq \left(n R_2^2\frac{1}{\frac{L_1}{R_1}-L_2}\right) e^{L_1(t-t^0)} \norm{y}_{B_1}^2,
$$
provided that
\begin{equation}\label{cond L1L2}
\frac{L_1}{R_1}-L_2>0.
\end{equation}

\item
Let $i \in \ens{m+1,\ldots,n}$.
Using \eqref{tentredeux}, we have
\begin{multline*}
\int_{I}
\abs{\sum_{j=1}^m q_{i-m,j}(\ssin_i(t,x))y_j(\ssin_i(t,x),0)}^2
e^{-L_2 x} \, dx
\\
\leq m \alpha^2 e^{L_1(t-t^0)}
\sum_{j=1}^m
\int_{I}
\abs{y_j(\ssin_i(t,x),0)}^2
e^{-L_1(\ssin_i(t,x)-t^0)}
\, dx
.
\end{multline*}

Doing the change of variables $\sigma=\ssin_i(t,x)$ and using the estimate  (see \eqref{def sin sout}, \eqref{deriv chi} and \eqref{bounds sin sout})
$$
\px{\ssin_i(t,x)}
=\frac{-e^{-\int_{\ssin_i(t,x)}^t \px{\lambda_i}(\theta,\chi_i(\theta;t,x)) \, d\theta}}{\lambda_i(\ssin_i(t,x),0)}
\leq -\frac{1}{R_1} e^{-\frac{R_1}{\epsilon}},
$$
we obtain
$$
\int_{I}
\abs{\sum_{j=1}^m q_{i-m,j}(\ssin_i(t,x))y_j(\ssin_i(t,x),0)}^2
e^{-L_2 x} \, dx
\leq
\left(m \alpha^2 R_1 e^{\frac{R_1}{\epsilon}}e^{-L_2} \right)
e^{L_1(t-t^0)}
\norm{y}_{B_2}^2.
$$

\item
For the next term, we have
\begin{multline*}
\int_0^1 \sum_{i=1}^n
\abs{\sum_{j=1}^n \int_{\ssinb_i(t^0;t,x)}^{t} m_{ij}(s,\chi_i(s;t,x))y_j(s,\chi_i(s;t,x)) \, ds}^2 e^{-L_2 x} \, dx
\\
\leq
nR_2^2\frac{1}{\epsilon}\sum_{i=1}^n
\int_0^1
\int_{\ssinb_i(t^0;t,x)}^{t} \sum_{j=1}^n \abs{y_j(s,\chi_i(s;t,x))}^2 \, ds \, dx.
\end{multline*}

Using the change of variable $(\sigma,\xi)=(s,\chi_i(s;t,x))$, whose Jacobian determinant is (see \eqref{deriv chi})
$$\det \begin{pmatrix} 1 & 0 \\ \lambda_i(s,\chi_i(s;t,x)) & \px{\chi_i}(s;t,x) \end{pmatrix}
=e^{-\int_s^t \px{\lambda_i}(\theta,\chi_i(\theta;t,x)) \, d\theta}
\geq e^{-\frac{R_1}{\epsilon}}, \quad \forall s \in (\ssin_i(t,x),t),
$$
we obtain
\begin{multline*}
\int_0^1 \sum_{i=1}^n
\abs{\sum_{j=1}^n \int_{\ssinb_i(t^0;t,x)}^{t} m_{ij}(s,\chi_i(s;t,x))y_j(s,\chi_i(s;t,x)) \, ds}^2 e^{-L_2 x} \, dx
\\
\leq
\left(
n^2 R_2^2\frac{1}{\epsilon}e^{\frac{R_1}{\epsilon}}e^{L_2}\frac{1}{L_1}
\right)
e^{L_1(t-t^0)}
\norm{y}_{B_1}^2.
\end{multline*}

\item
Finally, the estimate of the remaining term is easy:
\begin{multline*}
\int_0^1 \sum_{i=1}^n\abs{\sum_{j=1}^n \int_{\ssinb_i(t^0;t,x)}^t g_{ij}\left(s,\chi_i(s;t,x)\right)y_j\left(s,0\right) \, ds}^2 e^{-L_2 x} \, dx
\\
\leq
\left(
n^2R_2^2\frac{1}{\epsilon}e^{-L_2}
\right)
e^{L_1(t-t^0)} \norm{y}_{B_2}^2.
\end{multline*}
\end{enumerate}

\noindent
In summary, we have established the following estimate (provided that \eqref{cond L1L2} holds):
\begin{multline}\label{estim norm B1}
\norm{\mathcal{F}(y^1)-\mathcal{F}(y^2)}_{B_1}^2 \leq
3
\left(
m n R_2^2\frac{1}{\frac{L_1}{R_1}-L_2}
+n^2 R_2^2\frac{1}{\epsilon}e^{\frac{R_1}{\epsilon}}e^{L_2}\frac{1}{L_1}
\right)\norm{y}_{B_1}^2
\\
+3\left(
(n-m)m \alpha^2 R_1 e^{\frac{R_1}{\epsilon}}e^{-L_2}
+n^2R_2^2\frac{1}{\epsilon}e^{-L_2}
\right)\norm{y}_{B_2}^2
.
\end{multline}

\paragraph{Estimate of the $\norm{\cdot}_{B_2}$-norm.}

Let $x \in [0,1]$ be fixed.
Let $J=\ens{t \in (t^0,t^0+T), \quad \ssin_i(t,x)>t^0}$.

\begin{enumerate}[1)]
\item
Let $i \in \ens{1,\ldots,m}$.
We have
\begin{multline*}
\int_J \abs{\sum_{j=1}^n \int_0^1 f_{ij}(\ssin_i(t,x),\xi)y_j(\ssin_i(t,x),\xi) \, d\xi}^2
e^{-L_1(t-t^0)} \, dt
\\
\leq n R_2^2 \int_0^1 \left(
\int_J \sum_{j=1}^n \abs{y_j(\ssin_i(t,x),\xi)}^2 e^{-L_1(\ssin_i(t,x)-t^0)} e^{L_1(\ssin_i(t,x)-t)}\, dt
\right) \, d\xi.
\end{multline*}

Using once again \eqref{tentredeux finer}, \eqref{cond L1L2}, performing the change of variable $\sigma=\ssin_i(t,x)$, and using the estimate  (see \eqref{def sin sout}, \eqref{deriv chi} and \eqref{bounds sin sout})
\begin{equation}\label{estim jacob wrt t}
\pt{\ssin_i(t,x)}
=\frac{\lambda_i(t,x)e^{-\int_{\ssin_i(t,x)}^t \px{\lambda_i}(\theta,\chi_i(\theta;t,x)) \, d\theta}}{\lambda_i(\ssin_i(t,x),1)}
\geq \frac{\epsilon}{R_1} e^{-\frac{R_1}{\epsilon}},
\end{equation}

we obtain the estimate
$$
\int_J \abs{\sum_{j=1}^n \int_0^1 f_{ij}(\ssin_i(t,x),\xi)y_j(\ssin_i(t,x),\xi) \, d\xi}^2
e^{-L_1(t-t^0)} \, dt
\leq \left(
n \frac{R_1}{\epsilon} R_2^2 e^{\frac{R_1}{\epsilon}}\frac{1}{L_2}
\right)e^{-L_2(1-x)}\norm{y}_{B_2}^2.
$$

\item
The next estimate is where we will need the smallness assumption on $Q$.
Let $i \in \ens{m+1,\ldots,n}$.
Using \eqref{tentredeux} and the change of variables $\sigma=\ssin_i(t,x)$ (recall the estimate \eqref{estim jacob wrt t}), we obtain
$$
\int_{J}
\abs{\sum_{j=1}^m q_{i-m,j}(\ssin_i(t,x))y_j(\ssin_i(t,x),0)}^2
e^{-L_1(t-t^0)} \, dt
\leq
\left(
m\alpha^2\frac{R_1}{\epsilon}e^{\frac{R_1}{\epsilon}}
\right) e^{-L_2(1-x)} \norm{y}_{B_1}^2.
$$

\item
For $i \in \ens{1,\ldots,m}$, using the estimate
$$-R_1(t-s) \leq x-\chi_i(s;t,x),$$
we have
\begin{multline*}
\abs{\sum_{j=1}^n \int_{\ssinb_i(t^0;t,x)}^{t} m_{ij}(s,\chi_i(s;t,x))y_j(s,\chi_i(s;t,x)) \, ds}^2
e^{-L_1(t-t^0)}
\\
\leq
nR_2^2\frac{1}{\epsilon}
\int_{\ssinb_i(t^0;t,x)}^{t} \sum_{j=1}^n \abs{y_j(s,\chi_i(s;t,x))}^2 e^{-L_1(s-t^0)} e^{-\frac{L_1}{R_1}(\chi_i(s;t,x)-x)} \, ds.
\end{multline*}
Integrating and using the change of variable $(\sigma,\xi)=(s,\chi_i(s;t,x))$, whose Jacobian determinant is uniformly estimated for $s \in (\ssin_i(t,x),t)$ by (see \eqref{deriv chi} and \eqref{bounds sin sout})
$$\abs{\det \begin{pmatrix} 1 & 0 \\ \lambda_i(s,\chi_i(s;t,x)) & \pt{\chi_i}(s;t,x) \end{pmatrix}}
=\abs{\lambda_i(t,x)}e^{-\int_s^t \px{\lambda_i}(\theta,\chi_i(\theta;t,x)) \, d\theta}
\geq \epsilon e^{-\frac{R_1}{\epsilon}},
$$
we obtain (using also that $x \leq \chi_i(s;t,x) \leq 1$)
\begin{multline*}
\int_{t^0}^{t^0+T}\sum_{i=1}^m
\abs{\sum_{j=1}^n \int_{\ssinb_i(t^0;t,x)}^{t} m_{ij}(s,\chi_i(s;t,x))y_j(s,\chi_i(s;t,x)) \, ds}^2
e^{-L_1(t-t^0)} \, dt
\\
\leq
mn R_2^2 \frac{1}{\epsilon^2}e^{\frac{R_1}{\epsilon}}
\int_x^1
\left(\int_{t^0}^{t^0+T} \sum_{j=1}^n \abs{y_j(\sigma,\xi)}^2
e^{-L_1(\sigma-t^0)} \, d\sigma \right)
e^{-\frac{L_1}{R_1}(\xi-x)}
\, d\xi
\\
\leq
\left(
mn R_2^2 \frac{1}{\epsilon^2}e^{\frac{R_1}{\epsilon}}\frac{1}{\frac{L_1}{R_1}-L_2}
\right)
e^{-L_2(1-x)}\norm{y}_{B_2}^2,
\end{multline*}
provided that \eqref{cond L1L2} holds.
A similar reasoning shows that
\begin{multline*}
\int_{t^0}^{t^0+T}\sum_{i=m+1}^n
\abs{\sum_{j=1}^n \int_{\ssinb_i(t^0;t,x)}^{t} m_{ij}(s,\chi_i(s;t,x))y_j(s,\chi_i(s;t,x)) \, ds}^2
e^{-L_1(t-t^0)} \, dt
\\
\leq
\left(
(n-m)n R_2^2 \frac{1}{\epsilon^2}e^{\frac{R_1}{\epsilon}}\frac{1}{L_2+\frac{L_1}{R_1}}
\right)
e^{-L_2(1-x)}\norm{y}_{B_2}^2.
\end{multline*}

\item
For the remaining term, using a similar reasoning to the one used in the previous step, we obtain
\begin{multline*}
\int_{t^0}^{t^0+T}\sum_{i=1}^n
\abs{\sum_{j=1}^n \int_{\ssinb_i(t^0;t,x)}^{t} g_{ij}(s,\chi_i(s;t,x))y_j(s,0) \, ds}^2
e^{-L_1(t-t^0)} \, dt
\\
\leq
\left(
n^2 R_1 R_2^2 \frac{1}{\epsilon^2}e^{\frac{R_1}{\epsilon}}\frac{1}{L_1}
\right)
e^{-L_2(1-x)}\norm{y}_{B_2}^2.
\end{multline*}

\end{enumerate}

\noindent
In summary, we have established the following estimate (provided that \eqref{cond L1L2} holds):
\begin{multline}\label{estim norm B2}
\norm{\mathcal{F}(y^1)-\mathcal{F}(y^2)}_{B_2}^2 \leq
3
\left(
(n-m)m\alpha^2\frac{R_1}{\epsilon}e^{\frac{R_1}{\epsilon}}
\right)\norm{y}_{B_1}^2
\\
+3\Bigg(
mn \frac{R_1}{\epsilon} R_2^2 e^{\frac{R_1}{\epsilon}}\frac{1}{L_2}
+mn R_2^2 \frac{1}{\epsilon^2}e^{\frac{R_1}{\epsilon}}\frac{1}{\frac{L_1}{R_1}-L_2}	+(n-m)n R_2^2 \frac{1}{\epsilon^2}e^{\frac{R_1}{\epsilon}}\frac{1}{L_2+\frac{L_1}{R_1}}
\\
+n^2 R_1 R_2^2 \frac{1}{\epsilon^2}e^{\frac{R_1}{\epsilon}}\frac{1}{L_1}
\Bigg)\norm{y}_{B_2}^2
.
\end{multline}

Consequently, we see from \eqref{estim norm B1} and \eqref{estim norm B2} that $\mathcal{F}$ indeed satisfies the contraction property \eqref{F contraction} if $\alpha$ is small enough (depending only on $n-m, m, R_1$ and $\epsilon$) and if we fix $L_2>0$ and then $L_1>0$ large enough.
This concludes the proof of Theorem \ref{thm wp}.

\end{proof}

\begin{remark}\label{rem weak sol}
It can be shown that the broad solution is also the classical solution if the data of the system are smooth enough.
It then follows by standard approximation arguments that the broad solution is also the so-called weak solution.
We recall that the notion of weak solution for \eqref{system general bis} is obtained by multiplying \eqref{system general bis} by a smooth function and integrating by parts, that is, a function $y:(t^0,+\infty)\times(0,1) \longrightarrow \R^n$ is a weak solution to \eqref{system general bis} if $y \in C^0([t^0,+\infty);L^2(0,1)^n)$ and if it satisfies:
\begin{multline}\label{equ def sol}
\int_0^1 y(t^0+T,x) \cdot \varphi(t^0+T,x) \, dx
-\int_0^1 y^0(x) \cdot \varphi(t^0,x) \, dx \\
+\int_{t^0}^{t^0+T} \int_0^1 y(t,x) \cdot
\left(
-\pt{\varphi}(t,x)
-\Lambda(t,x)\px{\varphi}(t,x)
-\left(\px{\Lambda}(t,x)+\tr{M(t,x)}\right)\varphi(t,x)
\right) \, dx dt \\
+\int_{t^0}^{t^0+T} \int_0^1 y(t,\xi) \cdot
\tr{F(t,\xi)} \Lambda_{--}(t,1)\varphi_-(t,1)
 \, d\xi dt
=0,
\end{multline}
for every $T>0$ and every $\varphi \in C^1([t^0,t^0+T]\times[0,1])^n$ such that, for every $t \in [t^0,t^0+T]$,
$$
\begin{array}{c}
\ds \varphi_+(t,1)=0,
\\
\ds \varphi_-(t,0)
=-\Lambda_{--}(t,0)^{-1}\left(\tr{Q(t)}\Lambda_{++}(t,0)\varphi_+(t,0)
+\begin{pmatrix} \Id_{\R^m} & \tr{Q(t)} \end{pmatrix} \int_0^1 \tr{G(t,x)} \varphi(t,x) \, dx
\right).
\end{array}
$$
In \eqref{equ def sol}, we denoted by $\tr{A}$ the transpose of a matrix $A$ and $v_1 \cdot v_2$ denotes the canonical scalar product between two vectors $v_1,v_2$ of $\R^n$.
\end{remark}

\subsection{Justification of the formal computations}\label{sect justification}

In this section, we finally rigorously prove that the transformations that we used all along this paper are preserving broad solutions.
We show how it works only for the Fredholm transformation of Section \ref{sect stabi S2} (because it is simpler to present) but the reasoning is general and can be used for the Volterra transformation of Section \ref{reduc to triang} as well.
More precisely, the goal of this section is to prove the following result:

\begin{proposition}\label{prop broad to broad}
Let $H_{--}=(h_{ij})_{1 \leq i,j \leq m}$, where $h_{ij}$ is the solution to the differential equation \eqref{ODE kerh}.
Let $t^0 \geq 0$ be fixed.
Let $z^0 \in L^2(0,1)^n$ and let $z$ be the broad solution to
$$
\left\{\begin{array}{l}
\ds \pt{z}(t,x)+\Lambda(t,x) \px{z}(t,x)=G^3(t,x)z(t,0), \\
z_-(t,1)=0, \quad z_+(t,0)=Q^1(t)z_-(t,0),  \\
z(t^0,x)=z^0(x).
\end{array}\right.
$$
Then, the function $\gamma$ defined by the Fredholm transformation \eqref{transfo fred} is the broad solution to
$$
\left\{\begin{array}{l}
\ds \pt{\gamma}(t,x)+\Lambda(t,x) \px{\gamma}(t,x)=G^2(t,x)\gamma(t,0), \\
\gamma_-(t,1)=\ds \int_0^1 F^2(t,\xi) \gamma(t,\xi) \, d\xi, \quad \gamma_+(t,0)=Q^1(t)\gamma_-(t,0),  \\
\gamma(t^0,x)=\gamma^0(x),
\end{array}\right.
$$
where $\gamma^0(x)=z^0(x)-\int_0^1 H(t^0,x,\xi)z^0(\xi) \,d\xi$.
\end{proposition}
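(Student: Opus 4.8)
The plan is to prove the proposition first for smooth, compatible data — where $\gamma$ is a bona fide classical solution and the formal computations of Section~\ref{sect form compu} can be justified line by line — and then for arbitrary $z^0\in L^2(0,1)^n$ by density, using the well-posedness and continuous dependence of Theorem~\ref{thm wp}.

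\textbf{Step 1: the smooth case.} First I would approximate the data entering the construction — the speeds $\Lambda$, the coefficients $G^2_{--},G^2_{+-},Q^1$ and the initial datum $z^0$ — by $C^1$ functions satisfying the compatibility conditions at the relevant boundary corners, and recompute from them the kernel $H_{--}$ via \eqref{kij Fred}, the coefficient $G^3$ via \eqref{def G3}, and the gain $F^2$ via \eqref{def F2}. For such data the broad solution $z$ is classical (Remark~\ref{rem weak sol}), the surfaces $\xi=\psi_{ij}(t,x)$ are $C^1$, and each piece of $H_{--}$ is $C^1$ up to the boundary of $\Rect^{\pm}_{ij}$, so the computation of Section~\ref{sect form compu} becomes rigorous: for $i\in\ens{1,\ldots,m}$ one writes $\int_0^1 h_{ij}(t,x,\xi)z_j(t,\xi)\,d\xi=\int_0^{\psi_{ij}(t,x)}+\int_{\psi_{ij}(t,x)}^1$, differentiates by the Leibniz rule for integrals with moving endpoints and integrates by parts on each piece. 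The jump terms carry the factor $\pt{\psi_{ij}}+\lambda_i\px{\psi_{ij}}-\lambda_j(t,\psi_{ij})$, which vanishes by \eqref{equ psi}; the bulk $z$-integral vanishes by the kernel equations \eqref{kern equ T3} and the coefficient of $\gamma(t,0)$ reduces to $G^2(t,x)$ by \eqref{fred equ G3}; the condition $H_{--}(t,0,\xi)=0$ gives $\gamma(t,0)=z(t,0)$, hence $\gamma_+(t,0)=Q^1\gamma_-(t,0)$; the choice $H_{--}(t,x,0)=-G^2_{--}(t,x)\Lambda_{--}(t,0)^{-1}$ together with $G^3_{--}=G^3_{-+}=0$ produces the correct $x=0$ trace; and \eqref{def F2} produces $\gamma_-(t,1)=\int_0^1 F^2(t,\xi)\gamma(t,\xi)\,d\xi$. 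So $\gamma$ is the classical, hence broad, solution of the target system.

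\textbf{Step 2: passage to the limit.} Choosing the approximations above so that moreover $z^0_\varepsilon\to z^0$ in $L^2$ and the approximate coefficients converge in $L^2$ with uniform $L^\infty$ bounds, one gets $H_{--,\varepsilon}\to H_{--}$ in $L^2(\Rect)$ with a uniform $L^\infty$ bound (from \eqref{kij Fred} and the continuity of the characteristics, the entry/exit times and $\psi_{ij}$ with respect to the perturbation) and $F^2_\varepsilon\to F^2$ in $L^\infty$. By \eqref{dep cont}, the solutions $z_\varepsilon$ of $(0,G^3_\varepsilon,0,Q^1_\varepsilon)$ converge to $z$ in $C^0([t^0,t^0+T];L^2(0,1)^n)$ for every $T>0$, and then $\gamma_\varepsilon=z_\varepsilon-\int_0^1 H_\varepsilon(t,\cdot,\xi)z_\varepsilon(t,\xi)\,d\xi$ converges to $\gamma$ in the same space by Cauchy--Schwarz and the uniform kernel bound. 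Since each $\gamma_\varepsilon$ is the broad solution of $(0,G^2_\varepsilon,F^2_\varepsilon,Q^1_\varepsilon)$ with datum $\gamma^0_\varepsilon\to\gamma^0$, a last application of Theorem~\ref{thm wp} (uniqueness plus \eqref{dep cont}) to the limiting target system identifies $\gamma$ as its broad solution.

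\textbf{The main obstacle} is the careful bookkeeping of the jump contributions along the moving surfaces $\xi=\psi_{ij}(t,x)$ in Step~1: one must differentiate an integral whose integrand is only piecewise regular and whose splitting point depends on $(t,x)$, and then verify that every such term is killed by the equation \eqref{equ psi} — this is exactly the subtlety anticipated in Remark~\ref{rem discont}. A secondary point is to choose the smooth approximations so as to keep the block and triangular structure of $H$ and of $G^2$, so that \eqref{fred equ G3}, \eqref{def F2} and the invertibility of the Fredholm map survive in the limit. An alternative route, which dispenses with differentiation and applies verbatim to the Volterra transformation of Section~\ref{reduc to triang}, is to substitute the integral equation \eqref{valuehij} for $H_{--}$ and the integral equation \eqref{equdef sol broad} for $z$ directly into \eqref{transfo fred} and check, via Fubini's theorem and the characteristic identities of Proposition~\ref{prop caract out}, that $\gamma$ satisfies the integral equation \eqref{equdef sol broad} of the target system, using only that $H\in L^\infty$.
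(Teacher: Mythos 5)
Your regularize-and-pass-to-the-limit strategy is a genuinely different route from the paper's. The paper never smooths anything: it works directly at the level of broad solutions, using the characterization of Lemma \ref{prop charact broad sol} (the map $s \mapsto \gamma_i(s,\chi_i(s;t,x))$ must be $H^1$ and satisfy the ODE along characteristics), and it handles the discontinuous kernel by the change of variable $\xi=\chi_j(s;t,\theta)$ inside the Fredholm integral. This converts $\int_0^1 h_{ij}(t,x,\xi)z_j(t,\xi)\,d\xi$ evaluated along the $i$-th characteristic into $\int_{a_j(s)}^{b_j(s)}\eta_{ij}(s,\theta)\,d\theta$ with $s$-dependent endpoints, where $\eta_{ij}$ has \emph{zero} weak $s$-derivative because both $h_{ij}$ and $z_j$ are transported along the very curves $\chi_{ij}(\cdot;t,x,\theta)$; only the endpoint contributions survive, and the one at $\theta=a_j(s)$ (i.e. $\xi=0$) produces exactly $g^2_{ij}(s,\chi_i(s;t,x))\gamma_j(s,0)$. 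The jump surfaces $\xi=\psi_{ij}(t,x)$ are themselves unions of these characteristics (Proposition \ref{prop caract out}\ref{point iii}), so they are never crossed and no jump bookkeeping is needed. Your closing ``alternative route'' (substitute the integral equations and use Fubini along characteristics) is essentially this argument and is the cleaner of your two options; your main route buys a picture closer to the classical computations of Section \ref{sect stabi S2}, at the price of a two-tier limiting argument.

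Two points in your main route need repair. First, Step 2 repeatedly invokes \eqref{dep cont}, but Theorem \ref{thm wp} gives continuous dependence on the \emph{initial data} for a \emph{fixed} system; you are perturbing $G^3$, $G^2$, $F^2$ and $Q^1$ simultaneously, so you need an additional stability estimate with respect to the coefficients (obtained, e.g., by writing the difference of the two integral equations \eqref{equdef sol broad} and running a Gronwall argument with the perturbations as source terms — routine, since $\Lambda$ and hence the characteristics are unchanged, but not contained in the theorem you cite). The same issue recurs when you identify the limit of $\gamma_\varepsilon$ as the broad solution of the limiting target system. Second, Step 1 rests on the assertion that for smooth compatible data the broad solution of a system with the non-local terms $G(t,x)y(t,0)$ and $\int_0^1 F\,y\,d\xi$ is classical; the paper states this only in Remark \ref{rem weak sol} without proof, and genuine $C^1$ regularity requires the first-order compatibility conditions at the corners, not just continuity — you should either prove this or restrict to a dense class of data for which it is elementary.
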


We recall that $H$ is given by \eqref{def ker H}, $F^2$ is the solution of \eqref{def F2}, $Q^1$ is provided by Proposition \ref{prop transfo 1}, $G^2$ is provided by Proposition \ref{prop transfo 2} and, finally, $G^3$ is given by \eqref{def G3}.

\begin{remark}
Obviously, we could use the explicit expression \eqref{kij Fred}-\eqref{explicit bij} of the solution $H$ to simplify the forthcoming arguments but we choose not to do so and to only use the differential equation \eqref{ODE kerh} in order to give a general procedure that can also be used to justify the formal computations of Section \ref{reduc to triang} as well.
\end{remark}

A similar result to Proposition \ref{prop broad to broad} can be found in \cite[Proposition 3.5]{CN19}.
Here we propose a different and self-contained proof, based on the following characterization of broad solutions:

\begin{lemma}\label{prop charact broad sol}
A function $y:(t^0,+\infty)\times(0,1) \longrightarrow \R^n$ is the broad solution to \eqref{system general bis} if, and only if, $y$ has the regularity \eqref{reg broad sol} and, for every $i \in \ens{1,\ldots,n}$, for a.e. $t>t^0$ and a.e. $x \in (0,1)$, the function $s \mapsto y_i\left(s,\chi_i(s;t,x)\right)$ belongs to $H^1(\ssinb_i(t^0;t,x),\ssout_i(t,x))$ and it satisfies the ODE \eqref{sol along char}.
\end{lemma}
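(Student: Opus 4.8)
The lemma asserts that Definition~\ref{def broad sol} can be reformulated by replacing the integral equation \eqref{equdef sol broad} with the differential equation \eqref{sol along char} satisfied along the characteristics. Since the regularity \eqref{reg broad sol} belongs to both formulations, only the equivalence of \eqref{equdef sol broad} and \eqref{sol along char} has to be proved, and I would establish the two implications separately. The implication from \eqref{sol along char} to \eqref{equdef sol broad} is elementary: if for a.e.\ $(t,x)$ the function $w(s):=y_i(s,\chi_i(s;t,x))$ lies in $H^1(\ssinb_i(t^0;t,x),\ssout_i(t,x))$ and solves \eqref{sol along char}, then, the interval being bounded by \eqref{bounds sin sout}, $w$ is absolutely continuous with $w'\in L^2\subset L^1$, and the fundamental theorem of calculus combined with the differential equation and the initial condition in \eqref{sol along char} yields \eqref{equdef sol broad} upon evaluation at $s=t$; hence $y$ is the broad solution.

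For the converse, let $y$ be the broad solution and fix $i\in\ens{1,\ldots,m}$ (the case $i\ge m+1$ being identical with the boundary $\ens{x=1}$ replaced by $\ens{x=0}$). Every point of $(t^0,+\infty)\times(0,1)$ lies on a characteristic which, traced backwards, reaches either the initial line $\ens{t=t^0}$ (when $\ssin_i(t,x)<t^0$) or the boundary $\ens{x=1}$ (when $\ssin_i(t,x)>t^0$), the transition set $\ens{\ssin_i(t,x)=t^0}$ being a $C^1$-graph over $x$ by \eqref{reg sin sout} and \eqref{increases}, hence Lebesgue-null. On the first region the map $(\sigma,\eta)\mapsto(\sigma,\chi_i(\sigma;t^0,\eta))$ is a $C^1$-diffeomorphism, with Jacobian $\px{\chi_i}(\sigma;t^0,\eta)>0$ (see \eqref{reg xi}, \eqref{deriv chi}); on the second the map $(\sigma,\tau)\mapsto(\sigma,\chi_i(\sigma;\tau,1))$ is a $C^1$-diffeomorphism, with Jacobian $\pt{\chi_i}(\sigma;\tau,1)\neq0$ by \eqref{deriv chi} and \eqref{hyp speeds}. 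Pulling back through these diffeomorphisms the null set off which \eqref{equdef sol broad} holds, and invoking Fubini's theorem, one gets that for a.e.\ characteristic label the corresponding characteristic $\chi(\cdot)$ meets that null set in a one-dimensionally null set; a further change of variables, applied to the two parts of \eqref{reg broad sol}, shows that one may moreover assume $s\mapsto y_j(s,\chi(s))$ and $s\mapsto y_j(s,0)$ to be in $L^2$ of the associated interval $I=(\ssinb_i(t^0;t,x),\ssout_i(t,x))$.

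On such a good characteristic, by \eqref{chi invariance}, \eqref{invariance} and \eqref{def bi} the quantities $\ssinb_i(t^0;\cdot,\cdot)$ and $b_i(y)(\cdot,\cdot)$ are constant along $\chi(\cdot)$, say equal to $\sigma_0$ and $c$; writing \eqref{equdef sol broad} at $(\sigma,\chi(\sigma))$ for a.e.\ $\sigma\in I$ and using the group property $\chi_i(s;\sigma,\chi(\sigma))=\chi(s)$ gives
$$
y_i(\sigma,\chi(\sigma))=c+\int_{\sigma_0}^{\sigma}\left(\sum_{j=1}^n m_{ij}(s,\chi(s))\,y_j(s,\chi(s))+\sum_{j=1}^n g_{ij}(s,\chi(s))\,y_j(s,0)\right)ds .
$$
Since $m_{ij},g_{ij}\in L^\infty$ and the integrand lies in $L^2(I)$, the right-hand side, viewed as a function of $\sigma$, belongs to $H^1(I)$, and it therefore provides an $H^1(I)$ representative of $s\mapsto y_i(s,\chi(s))$ whose weak derivative is the integrand and whose value at $\sigma_0$ equals $b_i(y)(t,x)$; that is, it satisfies \eqref{sol along char}. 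As a.e.\ point $(t,x)$ lies on a good characteristic (diffeomorphisms preserve sets of full measure), the conclusion follows.

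The only delicate point is the Fubini step in the converse, namely passing from ``\eqref{equdef sol broad} holds a.e.\ in the two-dimensional domain'' to ``\eqref{equdef sol broad} holds at a.e.\ point of a.e.\ characteristic''. This forces one to parametrize the characteristics in two complementary ways --- by their value on the initial line $\ens{t=t^0}$ and by their entry time on the lateral boundary --- and to verify that both parametrizations are $C^1$-diffeomorphisms, which is where the regularity of the flow \eqref{reg xi}--\eqref{deriv chi} and the non-degeneracy of the speeds \eqref{hyp speeds} are used; everything else reduces to the classical one-dimensional equivalence between the integral and differential forms of a linear ODE.
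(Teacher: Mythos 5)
Your proof is correct and follows precisely the route the paper indicates (the paper gives no written proof, only the remark that the argument ``relies on the properties \eqref{chi invariance} and \eqref{invariance}''). Both directions are handled properly: the implication from \eqref{sol along char} to \eqref{equdef sol broad} is the fundamental theorem of calculus applied to the $H^1$ function $s \mapsto y_i(s,\chi_i(s;t,x))$; the converse correctly uses the two complementary $C^1$-diffeomorphic parametrizations of the characteristics (by foot point $\eta$ on $\{t=t^0\}$ and by entry time $\tau$ on the lateral boundary), the Jacobian formulas \eqref{deriv chi}, Fubini's theorem to propagate the null set, and the invariance relations \eqref{chi invariance}, \eqref{invariance} to identify $\ssinb_i(t^0;\sigma,\chi(\sigma))$ and $b_i(y)(\sigma,\chi(\sigma))$ as constants along the characteristic, so that \eqref{equdef sol broad} evaluated at $(\sigma,\chi(\sigma))$ yields exactly the $H^1$ representation with the desired weak derivative. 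Your careful treatment of the Fubini step (including the observation that the transition set $\{\ssin_i=t^0\}$ is a $C^1$-graph by \eqref{increases}, hence Lebesgue-null) is the point the paper leaves implicit, and your argument supplies it correctly.
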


The proof of Lemma \ref{prop charact broad sol} is not difficult, it simply relies on the properties \eqref{chi invariance} and \eqref{invariance}.

\begin{proof}[Proof of Proposition \ref{prop broad to broad}]
~
\begin{enumerate}[1)]

\item
The required regularity
$$
\gamma \in C^0([t^0,t^0+T];L^2(0,1)^n)
\cap C^0([0,1];L^2(t^0,t^0+T)^n), \quad \forall\, T>0,
$$
is clear since $z$ also has this regularity and $
(t,x) \mapsto \int_0^1 H_{--}(t,x,\xi)z_-(t,\xi) \,d\xi$ is continuous (see e.g. Remark \ref{rem broad reg int term}).

\item
The initial condition in the ODE formulation
$$\gamma_i\left(\ssinb_i(t^0;t,x),\chi_i(\ssinb_i(t^0;t,x);t,x)\right)=b_i(\gamma)(t,x)$$
is not difficult to check by using the boundary condition $z_-(t,1)=0$ with the definition \eqref{def F2} of $F^2$ and Fubini's theorem (case $\ssin_i(t,x)>t^0$ and $i \in \ens{1,\ldots,m}$), the condition $H_{--}(t,0,\xi)=0$ (case $\ssin_i(t,x)>t^0$ and $i \in \ens{m+1,\ldots,n}$) and the definition of $\gamma^0$ (case $\ssin_i(t,x)<t^0$).

\item
It remains to check that, for every $i \in \ens{1,\ldots,n}$, for a.e. $t>t^0$ and $x \in (0,1)$, the function $s \mapsto \gamma_i\left(s,\chi_i(s;t,x)\right)$ belongs to $H^1(\ssinb_i(t^0;t,x),\ssout_i(t,x))$ with
\begin{equation}\label{dds gamma}
\ds \dds \gamma_i\left(s,\chi_i(s;t,x)\right)=
\sum_{j=1}^m g^2_{ij}\left(s,\chi_i(s;t,x)\right)\gamma_j\left(s,0\right).
\end{equation}

By definition \eqref{transfo fred} of $\gamma$, we have
$$\gamma_i\left(s,\chi_i(s;t,x)\right)=z_i\left(s,\chi_i(s;t,x)\right)-\sum_{j=1}^m\int_0^1 h_{ij}(s,\chi_i(s;t,x),\xi)z_j(s,\xi) \, d\xi.$$

For $i \in \ens{m+1,\ldots,n}$, the identity \eqref{dds gamma} easily follows from the equation satisfied by $z_i$, the relation $z(\cdot,0)=\gamma(\cdot,0)$, and the fact that $h_{ij}=0$ for such indices (recall \eqref{def ker H}).

Let us now assume that $i \in \ens{1,\ldots,m}$.
The equation satisfied by $z_i$ then gives
$$\dds z_i\left(s,\chi_i(s;t,x)\right)=0, \quad \forall i \in \ens{1,\ldots,m}.$$

On the other hand, since we know some information of $h_{ij}$ along the characteristic curve $s \mapsto \chi_{ij}(s;t,x,\theta)=(s,\chi_i(s;t,x),\chi_j(s;t,\theta))$, we would like to perform the change of variable
$$\xi=\chi_j(s;t,\theta).$$

Thanks to \eqref{chi increases} and the implicit function theorem there exists $\theta_j \in C^1(\R^3)$ such that, for every $(s,t,\xi) \in \R^3$, we have
\begin{equation}\label{def thetaj}
\xi=\chi_j(s;t,\theta_j(s;t,\xi)),
\qquad \pxi{\theta_j}(s;t,\xi)>0.
\end{equation}

Using this change of variable, we have
$$
\gamma_i\left(s,\chi_i(s;t,x)\right)=z_i\left(s,\chi_i(s;t,x)\right)
-\sum_{j=1}^m \int_{a_j(s)}^{b_j(s)} \eta_{ij}(s,\theta) \, d\theta,
$$
where
$$a_j(s)=\theta_j(s;t,0),
\qquad
b_j(s)=\theta_j(s;t,1),$$
$$
\eta_{ij}(s,\theta)=
h_{ij}(\chi_{ij}(s;t,x,\theta))z_j(s,\chi_j(s;t,\theta)) \px{\chi_j}(s;t,\theta).
$$

We would like to use the formula
$$
\dds \left(\int_{a_j(s)}^{b_j(s)} \eta_{ij}(s,\theta) \, d\theta\right)=
b_j'(s)\eta_{ij}(s,b_j(s))-a_j'(s)\eta_{ij}(s,a_j(s))+\int_{a_j(s)}^{b_j(s)} \pas{\eta_{ij}}(s,\theta) \, d\theta.
$$

Clearly, $a_j, b_j \in C^1(\R)$.
Differentiating the relation $\xi=\chi_j(s;t,\theta_j(s;t,\xi))$ with respect to $s$ we obtain
$$
a_j'(s)=\frac{-\lambda_j(s,0)}{\px{\chi_j}(s;t,\theta_j(s;t,0))}.
$$
On the other hand, using \eqref{ODE kerh} with $\xi=0$, \eqref{def thetaj} and the boundary condition $z_-(\cdot,1)=0$, we have
$$
\eta_{ij}(s,a_j(s))
=-\frac{g^2_{ij}(s,\chi_i(s;t,x))}{\lambda_j(s,0)}
z_j(s,0) \px{\chi_j}(s;t,\theta_j(s;t,0)),
\qquad
\eta_{ij}(s,b_j(s))=0.
$$

Using the ODEs satisfied  along the characteristics by $h_{ij}$ (see \eqref{ODE kerh}) and $z_j$, and using the relation (see \eqref{deriv chi})
$$
\frac{\partial^2 \chi_j}{\partial s \partial x}(s;t,\theta)=\frac{\partial\lambda_{j}}{\partial x}(s,\chi_j(s;t,\theta))\px{\chi_j}(s;t,\theta),
$$
we can check that $\eta_{ij}$ has weak derivative with respect to $s$ which is equal to zero:
$$\pas{\eta_{ij}}(s,\theta)=0.$$

It follows from all the previous computations and the relation $z(\cdot,0)=\gamma(\cdot,0)$ that
$$
\dds \gamma_i\left(s,\chi_i(s;t,x)\right)
=\sum_{j=1}^m a_j'(s)\eta_{ij}(s,a_j(s))
=\sum_{j=1}^m g^2_{ij}(s,\chi_i(s;t,x))\gamma_j(s,0).
$$

\end{enumerate}

\end{proof}

\section{Constructions of $\sinterin_{ij}, \sinterout_{ij}$ and $\sdisc_{ij\ell}$}\label{app prop}

In this appendix, we give a proof of the existence $\sinterin_{ij}, \sinterout_{ij}$ and $\sdisc_{ij\ell}$ satisfying the properties stated in Proposition \ref{prop caract exit} and Proposition \ref{prop sdisc}.
We will make use of the following simple lemma:

\begin{lemma}\label{lem TVI}
Let $f \in C^1([a,b])$ ($a<b$) satisfy the following property:
\begin{equation}\label{prop lem TVI}
\forall s \in [a,b],
\quad f(s)=0 \quad \Longrightarrow \quad f'(s)<0.
\end{equation}
Then, there exists a unique $c \in [a,b]$ such that
$$
f(s)>0, \quad \forall s \in (a,c), \quad f(s)<0, \quad \forall s \in (c,b).
$$
Moreover, $c$ has the properties listed in Table \ref{tableau} (an $\emptyset$ means that such a situation can not occur).

\begin{table}[h]
\centering
\begin{tabular}{|c|c|c|c|}
\hline
& $f(b)>0$ & $f(b)=0$ & $f(b)<0$ \\
\hline
$f(a)>0$ & $c=b$ & $c=b$ & $f(c)=0$ \\
\hline
$f(a)=0$ & $\emptyset$ & $\emptyset$ & $c=a$ \\
\hline
$f(a)<0$ & $\emptyset$ & $\emptyset$ & $c=a$ \\
\hline
\end{tabular}
\caption{\label{tableau} Properties of $c$}
\end{table}

\end{lemma}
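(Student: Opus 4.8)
The plan is to exploit the transversality condition \eqref{prop lem TVI}, which forces every zero of $f$ to be a strict downward crossing. The main point to establish first is that $f$ has \emph{at most one} zero in $[a,b]$ and that this zero, when it exists, separates a region where $f>0$ from a region where $f<0$; once this is known, both the existence/uniqueness of $c$ and every entry of Table \ref{tableau} follow by elementary case analysis.

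More precisely, I would first prove the following key claim: \emph{if $f(s_1)=0$ for some $s_1\in[a,b]$, then $f(s)>0$ for every $s\in[a,s_1)$ and $f(s)<0$ for every $s\in(s_1,b]$}. To prove the second half, I argue by contradiction: if $f(s_2)\ge 0$ for some $s_2\in(s_1,b]$, then since $f'(s_1)<0$ gives $f<0$ just to the right of $s_1$, the intermediate value theorem produces a zero of $f$ in $(s_1,b]$; let $s_0$ be the smallest such zero (it exists, the zero set being closed and bounded away from $s_1$). On $(s_1,s_0)$ there is no zero and $f<0$ near $s_1^+$, so $f<0$ on all of $(s_1,s_0)$, whence $(f(s_0)-f(s))/(s_0-s)=-f(s)/(s_0-s)\ge 0$ for $s\in(s_1,s_0)$ and therefore $f'(s_0)\ge 0$, contradicting \eqref{prop lem TVI}. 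The first half is symmetric. In particular two distinct zeros $s_1<s_2$ are impossible (the claim at $s_1$ would force $f(s_2)<0$), so $f$ has at most one zero.

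With the key claim in hand, existence of $c$ is a short case distinction: if $f$ vanishes at some (necessarily unique) point, take $c$ to be that point, so the claim gives $f>0$ on $(a,c)$ and $f<0$ on $(c,b)$; if $f$ never vanishes, the intermediate value theorem shows $f$ has constant sign on $[a,b]$, and one takes $c=b$ when $f>0$ and $c=a$ when $f<0$. Uniqueness of $c$ is immediate: if $c<c'$ both had the stated property, any point of the nonempty open interval $(c,c')$ would have to satisfy both $f<0$ (from $c$) and $f>0$ (from $c'$).

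Finally, Table \ref{tableau} is read off from the same analysis according to the signs of $f(a)$ and $f(b)$. For example, $f(a)>0$ excludes $a$ from being the zero; if moreover $f(b)>0$, then by the key claim $f$ cannot vanish at all (a zero $s_0$ would force $f(b)<0$ if $s_0<b$ and $f(b)=0$ if $s_0=b$), hence $f>0$ on $[a,b]$ and $c=b$; if $f(b)<0$ the intermediate value theorem yields the unique zero $c\in(a,b)$, i.e.\ $f(c)=0$; and if $f(b)=0$ then $b$ is that zero, $c=b$. The rows $f(a)=0$ and $f(a)<0$ are handled identically, and the entries marked $\emptyset$ are precisely the sign patterns the key claim rules out (e.g.\ $f(a)=0$ forces $f<0$ on $(a,b]$, so $f(b)<0$). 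I expect the only slightly delicate step to be the extraction of the extremal zero $s_0$ in the proof of the key claim, which rests on the zero set of $f$ being closed; everything else is routine.
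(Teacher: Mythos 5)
Your proof is correct and complete: the key claim that every zero of $f$ is a strict sign change (hence $f$ has at most one zero), proved by extracting the extremal zero $s_0$ and deriving $f'(s_0)\ge 0$ from the one-sided difference quotients, is exactly the right mechanism, and the case analysis for the existence/uniqueness of $c$ and for every entry of Table \ref{tableau} checks out. The paper states Lemma \ref{lem TVI} as a ``simple lemma'' and omits its proof entirely, so there is no argument to compare against; yours is the natural one and could be inserted verbatim.
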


\begin{proof}[Proof of Proposition \ref{prop caract exit}]
We recall that $i \in \ens{1,\ldots,m}$ and we refer to Figures \ref{figure prop caract 1}, \ref{figure prop caract 2}, \ref{figure prop caract 3} and \ref{figure prop caract 4} for a clarification of the geometric situation (at a fixed $t$).
We only focus on the existence part since the uniqueness readily follows from the properties that have to be satisfied.

\begin{enumerate}[1)]
\item
Assume that $j \in \ens{1,\ldots,i-1}$.
For every $(t,x,\xi) \in \clos{\Taub}$ such that $x<1$, we introduce the $C^1$ function
$$f:s \in \left[\max\ens{\ssin_i(t,x),\ssin_j(t,\xi)},t\right] \mapsto \chi_j(s;t,\xi)-\chi_i(s;t,x).$$
Note that the interval has a non empty interior since $x<1$ and $\xi \leq x<1$ (see \eqref{tentredeux}-\eqref{sin is zero}).
This function clearly satisfies the property \eqref{prop lem TVI} thanks to the ODE \eqref{ODE xi} and the assumption \eqref{hyp speeds} since $j<i$.
Consequently, Lemma \ref{lem TVI} applies and gives the existence of $\sinterin_{ij}(t,x,\xi)$ with
$$\max\ens{\ssin_i(t,x),\ssin_j(t,\xi)} \leq \sinterin_{ij}(t,x,\xi) \leq t,$$
and such that
$$\chi_j(s;t,\xi)<\chi_i(s;t,x), \quad \forall s \in \left(\sinterin_{ij}(t,x,\xi),t\right).$$
Clearly, $(t,x,\xi) \mapsto t-\sinterin_{ij}(t,x,\xi) \in L^{\infty}(\Taub)$ thanks to \eqref{bounds sin sout}.
Moreover, it follows from Table \ref{tableau} that
\begin{equation}\label{def 1 sij}
\left\{\begin{array}{ll}
\sinterin_{ij}(t,x,\xi)=t & \mbox{ if } \ssin_i(t,x)<\ssin_j(t,\xi) \mbox{ and } \xi=x , \\
f(\sinterin_{ij}(t,x,\xi))=0 & \mbox{ if } \ssin_i(t,x)<\ssin_j(t,\xi) \mbox{ and } \xi<x,
\\
\sinterin_{ij}(t,x,\xi)=\ssin_i(t,x) & \mbox{ if } \ssin_i(t,x)=\ssin_j(t,\xi) \mbox{ and } \xi<x,
\\
\sinterin_{ij}(t,x,\xi)=\ssin_i(t,x) & \mbox{ if } \ssin_i(t,x)>\ssin_j(t,\xi) \mbox{ and } \xi<x.
\end{array}\right.
\end{equation}
Let us now complete the definition of $\sinterin_{ij}$ on the remaining parts of $\clos{\Taub}$.
The missing case in \eqref{def 1 sij} is when $\ssin_i(t,x) \geq \ssin_j(t,\xi)$ and $\xi=x$.
However, unless $x=1$, these conditions are not compatible since $\ssin_i(t,x)<\ssin_j(t,x)$ for $j<i \leq m$ (see \eqref{comparison ssout}).
Consequently, it only remains to define $\sinterin_{ij}$ in the part where $x=1$, which we do now by setting
\begin{equation}\label{def 2 sij}
\sinterin_{ij}(t,1,\xi)=t.
\end{equation}
We can check that $\sinterin_{ij}$ defined by \eqref{def 1 sij}-\eqref{def 2 sij} belongs to $C^0(\clos{\Taub})$ (for the second case in \eqref{def 1 sij} this follows from the implicit function theorem).
Therefore, such a $\sinterin_{ij}$ clearly satisfies all the properties claimed in the statement of item \ref{exit 1} of Proposition \ref{prop caract exit}.

\item
Assume that  $j=i$.
We will show that, in this case, we can simply take
$$\sinterout_{ii}(t,x,\xi)=\ssout_i(t,\xi).$$
Clearly, $\sinterout_{ii} \in C^0(\clos{\Taub})$ with $(t,x,\xi) \mapsto \sinterout_{ii}(t,x,\xi)-t \in L^{\infty}(\Taub)$ thanks to \eqref{bounds sin sout} and $\sinterout_{ii}(t,x,\xi)>t$ as long as $\xi>0$ (see \eqref{tentredeux}-\eqref{sin is zero}).
Let us now observe that, for $\xi<x$, we have from \eqref{chi increases}:
$$\chi_i(s;t,\xi)<\chi_i(s;t,x), \quad \forall s \in \R,$$
and $\chi_i(s;t,\xi)>0$ for $s \in (t,\sinterout_{ii}(t,x,\xi))$ since $\ssout_i(t,\xi)< \ssout_i(t,x)$ by \eqref{increases} (recall that $j=i \in \ens{1,\ldots,m}$).

\item
The proof for the case $j \in \ens{i+1,\ldots,m}$ is similar to the proof of part 1) by considering, for each $(t,x,\xi) \in \clos{\Taub}$ such that $\xi,x>0$, the function
\begin{equation}\label{def-F}
f:s \in \left[t,\min\ens{\ssout_i(t,x),\ssout_j(t,\xi)}\right] \mapsto \chi_i(s;t,x)-\chi_j(s;t,\xi).
\end{equation}

\item
The proof for the case $j \in \ens{m+1,\ldots,n}$ is also similar to the proof of part 1) by considering, for each $(t,x,\xi) \in \clos{\Taub}$ such that $0 \leq \xi<x \leq 1$, the function $f$ defined again by \eqref{def-F}.
\end{enumerate}
\end{proof}

\begin{proof}[Proof of Proposition \ref{prop sdisc}]
The difference with the proof of Proposition \ref{prop caract exit} is that we do not need to neither track the regularity of the point where the function $f$ vanishes nor its sign on the left and right of this zero.
It is a straightforward consequence of Lemma \ref{lem TVI} applied to the following functions (it is enough to consider non empty intervals):
\begin{enumerate}[1)]
\item
For $j \in \ens{1,\ldots,i-1}$, we use
$$f:s \in \left[\sinterin_{ij}(t,x,\xi),t\right] \mapsto \chi_j(s;t,\xi)-\psi_{i \ell}\left(s,\chi_i(s;t,x)\right).$$
Using the ODE \eqref{ODE xi} satisfied by $\chi_j$ and using the equation \eqref{equ psi} satisfied by $\psi_{i\ell}$, we have
$$
f'(s)=\lambda_j\left(s,\chi_j(s;t,\xi)\right)
-\lambda_{\ell}\left(s,\psi_{i\ell}\left(s,\chi_i(s;t,x)\right)\right).
$$
Since $j<\ell$, this shows that such a $f$ satisfies the property \eqref{prop lem TVI} of Lemma \ref{lem TVI}.

\item
For $j \in \ens{i,\ldots,\ell-1}$, we use
\begin{equation}\label{def F(s)}
f:s \in \left[t,\sinterout_{ij}(t,x,\xi)\right] \mapsto \chi_j(s;t,\xi)-\psi_{i \ell}\left(s,\chi_i(s;t,x)\right).
\end{equation}

\item
For $j \in \ens{\ell+1,\ldots,m}$, we use the function $-f$, where $f$ is given by \eqref{def F(s)}.

\item
For $j \in \ens{m+1,\ldots,n}$, we use the same function $f$ given by \eqref{def F(s)} (in fact, the result then directly follows from the intermediate value theorem).
\end{enumerate}

\end{proof}

\section{Construction of $\Omega_i$}\label{app lemma}

This appendix is devoted to the proof of Lemma \ref{key lemma}, that is to the existence of the key change of variable needed in the proof of Proposition \ref{contraction estimates}.
We recall that $i \in \ens{1,\ldots,m}$.

\begin{enumerate}[1)]
\item
Inspired by the time-independent case (see Remark \ref{contraction estimates}), we look for $\Omega_i$ in the following form:
$$\Omega_i(t,x,\xi)=\omega^1_i(t,x)-\omega^{\nu}_i(t,\xi),$$
where, at each fixed $\nu \in (0,1]$, $\omega^{\nu}_i(\cdot,\cdot)$ is the solution to the following linear hyperbolic equation:
\begin{equation}\label{def omegai}
\left\{\begin{array}{l}
\ds \pt{\omega^{\nu}_i}(t,x)+\frac{\lambda_i(t,x)}{\nu}\px{\omega^{\nu}_i}(t,x)=0,\\
\omega^{\nu}_i(t,0)=t,
\end{array}\right.
\qquad t \in \R, \, x \in [0,1].
\end{equation}
The solution of \eqref{def omegai} is explicit:
\begin{equation}\label{def omegai ex}
\omega^{\nu}_i(t,x)=\omega^{\nu}_i(\ssoutnu_i(t,x),0)=\ssoutnu_i(t,x),
\end{equation}
where $\ssoutnu_i(t,x) \geq t$ (with $\ssoutnu_i(t,x)=t \Longleftrightarrow x=0$) is the unique number such that
\begin{equation}\label{def soutnu}
\chi^{\nu}_i(\ssoutnu_i(t,x);t,x)=0,
\end{equation}
where $s \mapsto \chi^{\nu}_i(s;t,x)$ is the solution to the ODE
\begin{equation}\label{ODE xinu}
\left\{\begin{array}{l}
\ds \pas{\chi^{\nu}_i}(s;t,x)=\frac{1}{\nu}\lambda_i(s,\chi^{\nu}_i(s;t,x)), \quad \forall s \in \R,\\
\chi^{\nu}_i(t;t,x)=x.
\end{array}\right.
\end{equation}
We can check that the map $(t,x,\nu) \mapsto \omega^{\nu}_i(t,x)$ belongs to $C^1(\R\times[0,1]\times(0,1])$.

\item
We now prove that there exists $\delta>0$ such that, for every $t \in \R$, $x \in [0,1]$ and $\nu \in (0,1]$,
\begin{equation}\label{lbounds omegai}
\pt{\omega^{\nu}_i}(t,x) \geq \epsilon\delta,
\qquad \px{\omega^{\nu}_i}(t,x) \geq \nu\delta,
\qquad \pnu{\omega^{\nu}_i}(t,x) \geq 0.
\end{equation}

Using the equation \eqref{def omegai} and the assumption \eqref{hyp speeds}, it is clear that the estimate for $\partial \omega^{\nu}_i/\partial t$ follows from the estimate of $\partial \omega^{\nu}_i/\partial x$.
Note from \eqref{def omegai ex} that $\partial \omega^{\nu}_i/\partial x=\partial \ssoutnu_i/\partial x$.
Taking the derivative of \eqref{def soutnu} with respect to $x$, we obtain
$$
\frac{1}{\nu}\lambda_i(\ssoutnu_i(t,x),\chi^{\nu}_i(\ssoutnu_i(t,x);t,x))
\px{\ssoutnu_i}(t,x)
+\px{\chi^{\nu}_i}(\ssoutnu_i(t,x);t,x)
=0.
$$
Since $\lambda_i \in L^{\infty}(\R\times(0,1))$, we have to bound $\px{\chi^{\nu}_i}(\ssoutnu_i(t,x);t,x)$ from below by a positive constant that does not depend on $t,x$ and $\nu$.
From \eqref{ODE xinu} we can show that
$$\px{\chi^{\nu}_i}(s;t,x)=e^{\frac{1}{\nu}\int_t^s \px{\lambda_i}(\theta,\chi^{\nu}_i(\theta;t,x)) \, d\theta},$$
so that
$$\px{\chi^{\nu}_i}(\ssoutnu_i(t,x);t,x) \geq e^{\frac{1}{\nu} (\ssoutnu_i(t,x)-t)  \inf_{\R\times[0,1]} \px{\lambda_i}}.$$
This establishes the desired lower bound since $\px{\lambda_i} \in L^{\infty}(\R\times(0,1))$ and $0 \leq \ssoutnu_i(t,x)-t \leq \frac{x}{\epsilon} \nu$ (the proof is similar to the one of \eqref{bounds sin sout}).
Note that it follows as well from this estimate that $\Omega_i \in L^{\infty}(\R\times(0,1)\times(0,1))$.

To prove the remaining estimate in \eqref{lbounds omegai}, we denote by $\gamma^{\nu}=\partial \omega^{\nu}_i/\partial \nu$ and observe that it satisfies
$$
\left\{\begin{array}{l}
\ds \pt{\gamma^{\nu}}(t,x)+\frac{\lambda_i(t,x)}{\nu}\px{\gamma^{\nu}}(t,x)
=\frac{\lambda_i(t,x)}{\nu^2} \px{\omega^{\nu}_i}(t,x) \leq 0,\\
\gamma^{\nu}(t,0)=0,
\end{array}\right.
\qquad t \in \R, \, x \in [0,1].
$$
It immediately follows that $\gamma^{\nu} \geq 0$.

\item
Let us now check the estimates \eqref{Omega j<i} and \eqref{Omega j>i}.
We have
\begin{multline*}
\pt{\Omega_i}(t,x,\nu)+\lambda_i(t,x)\px{\Omega_i}(t,x,\nu)+\lambda_j(t,\xi)\pxi{\Omega_i}(t,x,\nu)
\\
=\pt{\omega^1_i}(t,x)+\lambda_i(t,x)\px{\omega^1_i}(t,x)
-\pt{\omega^{\nu}_i}(t,\xi)-\lambda_j(t,\xi)\px{\omega^{\nu}_i}(t,\xi)
\\
=-\pt{\omega^{\nu}_i}(t,\xi)\left(1-\nu \frac{\lambda_j(t,\xi)}{\lambda_i(t,\xi)}\right).
\end{multline*}

Since $\lambda_j/\lambda_i \leq 1$ for $i \leq j$ and $i \leq m$, we see that the estimate \eqref{Omega j>i} is obtained by simply taking
\begin{equation}\label{cond nu 1}
0<\epsilon_0 \leq \epsilon\delta(1-\nu),
\qquad
0<\nu<1.
\end{equation}

On the other hand, let us introduce
$$r=\max_{1 \leq j<i} \sup_{\substack{t \in \R \\ \xi \in [0,1]}} \frac{\lambda_i(t,\xi)}{\lambda_j(t,\xi)}.$$
Clearly, $0< r \leq 1$.
In fact, $r<1$ since from \eqref{hyp speeds bis} we have, for $j<i \leq m$,
$$\sup_{\substack{t \in \R \\ \xi \in [0,1]}} \frac{\lambda_i(t,\xi)}{\lambda_j(t,\xi)} \leq 1-\frac{\epsilon}{\norm{\lambda_j}_{L^{\infty}}}.$$
The estimate \eqref{Omega j<i} now follows from \eqref{lbounds omegai} by taking
\begin{equation}\label{cond nu 2}
0<\epsilon_0 \leq \epsilon\delta\left(\frac{\nu}{r}-1\right),
\qquad
r<\nu \leq 1.
\end{equation}
Note that the conditions \eqref{cond nu 1} and \eqref{cond nu 2} are compatible by taking $\nu$ close enough to $1$.

\item
It remains to check that $\Omega_i \geq 0$ on $\clos{\Taub}$.
Since both functions $\nu \mapsto \omega^{\nu}_i(t,x)$ and $x \mapsto \omega^{\nu}_i(t,x)$ are nondecreasing by \eqref{lbounds omegai} and $\xi \leq x$, we have
$$\omega^1_i(t,x) \geq \omega^{\nu}_i(t,x) \geq \omega^{\nu}_i(t,\xi).$$

\qed

\end{enumerate}

\bibliographystyle{amsalpha}
\bibliography{biblio}

\end{document}